\definecolor{marin}{rgb}   {0.,   0.3,   0.7} 
\definecolor{rouge}{rgb}   {0.8,   0.,   0.} 
\definecolor{sepia}{rgb}   {0.8,   0.5,   0.} 
\newtheorem{lemma}{Lemma}%[section]
\newtheorem{theorem}{Theorem}
\newtheorem{proposition}{Proposition}
\newtheorem{corollary}{Corollary}
\newtheorem{remark}{Remark}
\newtheorem{example}{Example}
\newtheorem{hypothesis}{Hypothesis}
\newtheorem{notation}{Notation}
\newtheorem{definition}{Definition}
\newtheorem{conclusion}{Conclusion}
\numberwithin{equation}{section}
\newcommand{\QED}{\mbox{}\hfill \raisebox{-0.2pt}{\rule{5.6pt}{6pt}\rule{0pt}{0pt}} 
          \medskip\par}
\newcommand{\widebar}[1]{\mkern 1.5mu\overline{\mkern-1.5mu#1\mkern-1.5mu}\mkern 1.5mu}
\newcommand{\dd}{\mathrm{d}}
\newcommand{\E}{\mathbb{E}}
\newcommand{\N}{\mathbb{N}}
\newcommand{\R}{\mathbb{R}}
\newcommand{\C}{\mathbb{C}}
\newcommand{\T}{\mathbb{T}}
\newcommand{\Z}{\mathbb{Z}}
\newcommand{\er}{{\alpha}}
\newcommand{\Norm}[2]{\|#1\|\left.\vphantom{T_{j_0}^0}\!\!\right._{#2}}
\author{Erwan Faou}
\address{Univ. Rennes \& INRIA \& IRMAR  \\
ENS Rennes\\ 
Avenue Robert Schumann F-35170 Bruz, France. } 
\email{Erwan.Faou@inria.fr}
\title[Wave turbulence]
\begin{document}

\begin{abstract}
We consider stochastic and deterministic three-wave semi-linear systems with bounded and almost continuous set of frequencies. Such systems can be obtained by considering nonlinear lattice dynamics or truncated partial differential equations on large periodic domains. 
We assume that the nonlinearity is small  and that the noise is small or void and acting only in the angles of the Fourier modes (random phase forcing). We consider random initial data and assume that these systems possess natural invariant distributions corresponding to some Rayleigh-Jeans stationary solutions of the wave kinetic equation appearing in wave turbulence theory. We consider random initial modes drawn with probability laws that are perturbations of theses invariant distributions. In the stochastic case, we prove that in the asymptotic limit (small nonlinearity, continuous set of frequency  and small noise), the renormalized fluctuations of the amplitudes of the Fourier modes converge in a weak sense towards the solution of the linearized wave kinetic equation around these Rayleigh-Jeans spectra. Moreover, we show that in absence of noise, the deterministic equation with the same random initial condition satisfies a generic Birkhoff reduction in a probabilistic sense, without kinetic description at least in some regime of parameters. 

\end{abstract}

\subjclass{35R60, 60H15, 76F55, 70K30}
\keywords{Stochastic Partial differential equations, Wave turbulence, Random initial data, Kinetic equations, Nonlinear resonances, Averaging.}
\thanks{
}

\maketitle
\tableofcontents

\section{Introduction}

We consider nonlinear three-waves dynamics of the form 
\begin{equation}
\label{KPI}
\frac{\dd}{\dd t} U_k(t) = i \omega_k U_k(t) + \varepsilon Q_k^N(U,U) + g_k(\delta,U)
\end{equation}
where $k = (k_x,k_y) \in \mathcal{D}_N := \mathcal{D} \cap \Z^2/N$ where $\mathcal{D}$ is a bounded set of $\R^2$, and where $U_k(t)$ are typically renormalized Fourier coefficients of a sequence of $N$-periodic lattice points $U(t,j,\ell)$, $(j,\ell) \in \Z^2$, or of a function $U(t,x,y)$ defined on a large torus or size $N$.  The nonlinearity $Q^N$ is quadratic of convolution type and symplectic in some Hamiltonian variables. The frequency vector $\omega_k$ is the trace of a smooth real function on the discrete set $\mathcal{D}_N$ and is skew symmetric with respect to $k$. The term $g_k(\delta,U)$ models a random forcing in the angles of the complex coefficients $U_k(t)$, of strength $\delta$. We moreover make the hypothesis that \eqref{KPI} preserves the $L^2$ norm of the $U_k$, which guarantees the existence of an invariant measure corresponding to the equirepartition of energy amongst all the modes. 

As explained in the book of Nazarenko (see \cite{Naz1}) such equations are universal models for two-dimensional real nonlinear waves and can be derived as model equation for various physical phenomena such as
water waves \cite{Ablo1}, \cite{Segur}, \cite{Hammack}, but also in nonlinear optics \cite{Peli3} or plasma physics \cite{Infeld}.
Such models can be also derived from nonlinear lattice equations like  Kadomtsev--Petviashvili (KP) lattices (see for instance \cite{duncan}),   FPUT lattices (see \cite{CDRT98,BCMM15}),  finite differences or spectral approximations of real semilinear real wave equation (see for instance \cite{HL08}), or real Hamiltonian partial differential equation set of a large periodic domain of size $N$ (as in \cite{fgh}), with regularized nonlinearity truncated in frequency. 
In order to obtain fully explicit formulae and have access to simple calculations, we will retain this last modelling and require the nonlinearity to be quadratic while the linear frequencies will be fixed to 
\begin{equation}
\label{omegak}
\omega_k = k_x^3 + \eta k_y^2 k_x^{-1}, 
\end{equation}
corresponding to the frequency of the continuous KP equation.  
This choice simplifies some technical parts of the analysis concerning the resonant manifold. The parameter $\eta > 0$ is given, and will be important for the analysis of the deterministic case without random phase forcing but plays no role in the stochastic case.

In the present paper, we consider the system \eqref{KPI} in the framework of wave turbulence theory, see \cite{Peierls1}, \cite{Peierls2}, \cite{Zakh84}, \cite{ZLF92}, \cite{MMT}, \cite{LuSpohn}, \cite{Kuksin1}, \cite{Kuksin2}, \cite{Naz1}, which means that we consider random initial data, and are interested in the statistical description of the solution over long times. The external forces $g_k(\delta,U)$ models the classical
 {\em Random Phase Assumption} invoked in wave turbulence theory by a stochastic forcing in the angles of the complex coefficients $U_k(t)$.   
Such a random forcing ensures the preservation of the $L^2$, as for instance multiplicative noise in Sch\"odinger equations (see \cite{DD99}) or in conservative Hamiltonian system (see \cite{Misawa,Milstein})\footnote{It can be also compared with the convolution potentials used in KAM theory to avoid resonances (see for instance \cite{BG,ElKuk}}.

Note that the bound on the frequency set if of course a strong assumption in the model and restricts the analysis to discrete models on lattices, as in \cite{LuSpohn}, or numerical approximations of partial differential equations. However, it allows to focus  the difficulties on the {\em continuous limit} in frequency, which drives the phenomena of {\em concentration on the resonant manifold} appearing in wave turbulence.

We will consider asymptotic regimes with respect to the following three parameters: 
\begin{itemize}
\item Continuous limit in the frequency set, {\em i.e.}  $N \to \infty$. The model thus degenerates to a dispersive equation. 
\item Small nonlinearity, {\em i.e.} $\varepsilon \to 0$, the system is thus weakly non linear. 
\item Small noise: The random phase forcing is driven by independent Brownian motions in the angles of the coefficients $U_k(t)$ with variance $\delta \to 0$. 
\end{itemize}

Wave turbulence theory predicts that in a some asymptotic regimes with respect to these three parameters, then the expectations $\E \, |U_k(t)|^2$, $k = (k_x,k_y) \in \mathcal{D} \cap \Z^2/N$ are well approximated in some time scale by $r(t,k)$, $k \in \mathcal{D} \subset \R^2$ the solution of a {\em wave kinetic} equation of the form 
\begin{equation}
\label{WK}
\partial_t r(t,k) = \int_{\substack{k = j+ m \\ \omega_k = \omega_j + \omega_m}} \mathcal{Q}^k_{m j } \, r(t,j) \, r(t,m) \, \dd \Sigma(j, m) ,
\end{equation}
where $\mathcal{Q}^k_{mj}$ are real coefficients. The measure $\dd \Sigma(j,m)$ is defined as the microcanonical measure on the resonant manifold.  This kinetic equation of Boltzmann type possesses stationary solutions typically of the form $|k|^{-\alpha}$ describing Kolmogorov spectra. Amongst these solutions, Rayleigh-Jeans spectra are specific stationary solutions corresponding to equirepartition of the energy according to the invariant quantities of \eqref{KPI}. In the case of the $L^2$ norm, which is an invariant of \eqref{KPI}, it corresponds to constant stationary solutions $r(t,k) = \sigma^2$ of \eqref{WK}, for some $\sigma > 0$, which is well defined as the set of frequencies considered is here bounded.
In Hamiltonian variables - that we will use constantly in the sequel -  these distributions correspond to one-dimensional cascade solutions of the wave kinetic equation proportional to $|k_x|^{-1}$ where $k_x$ is the fourier index in the variable $x$. 

 %, and defines a stationary solution of the wave kinetic equation. 

In our setting, for all given set of parameter $(N,\varepsilon,\delta)$, these particular solutions correspond to Gaussian invariant measures associated with the  $L^2$ norm of the equation. If $U_k = P_k + i Q_k$ with $P_k$ and $Q_k$ real random variables, a sampling of these measures can be done by drawing all random variables $P_k$ and $Q_k$ according to the same normal law $
\mathcal{N}(0,\sigma^2/2)$ for some $\sigma > 0$. 
For such an equirepartited initial data, the law of the solution $U_k(t)$ is stationary, and the momenta $\E\,  |U_k(t)|^2 = \sigma^2$ are constant.
 
We  consider random initial data with small perturbations of the variance $\sigma^2$ of order $\mathcal{O}(1/N^\er)$, $\er \geq 1$. Each of the modes $P_k(0)$ and $Q_k(0)$ are drawn with respect to normal laws with variance $\frac{\sigma^2}{2} + \frac{1}{2N^\er}g_0(k)$ where $g_0(k)$ are functions depending smoothly on $k$. We thus have 
\begin{equation}
\label{leonard}
\E |U_k(0)|^2 = \sigma^2 + \frac{g_0(k)}{N^\er},\quad \er \geq 1. 
\end{equation}
Under these assumptions, 
our main result can be stated as follows. We can identify the weak limit of the renormalized fluctuations,
\begin{equation}
\label{result1}
\forall\, t \in [0,T],\qquad
\lim_{\delta \to 0}   \lim_{\substack{\varepsilon \to 0 \\ N \to \infty} }  N^\er \Big( \E\, |U_k(\frac{t}{\varepsilon^2})|^2 - \sigma^2\Big) =  f(t, k) \quad \mbox{weakly,} 
\end{equation}
and prove that $f(t,k)$, $k \in \R^2$ satisfies the linearized kinetic equation on $[0,T]$
\begin{equation}
\label{WKL}
\partial_t f(t,k) = \sigma^2 \int_{\substack{k = j+ m \\ \omega_k = \omega_j + \omega_m}} \mathcal{Q}^k_{m j } \big( f(t,m) + f(t,j)\big)  \dd \Sigma(j, m) ,
\end{equation}
with $f(0,m) = g_0(m)$, 
coming from the linearization of \eqref{WK} around the stationary solution $r(t,k) = \sigma^2$. 
  Let us make some comments on this result. 
\begin{itemize} 
\item  The scaling \eqref{leonard}, in particular the condition $\alpha \geq 1$ means that we consider {\em fluctuations} around the invariant measure. It is consistent with the Feldman-H\'ajek Theorem concerning the perturbation of Gaussian measures (see \cite{daprato}) and allows to obtain bounds independent of $N$. It is also  reminiscent of fluctuation scalings used in hydrodynamics limits: see for instance \cite{BGLS} and the reference therein for the 
derivation of the linear Boltzmann equation from $N$ particules in hard sphere interaction over long times. As observed in this latter reference the {\em a priori} estimate given by this scaling (see Proposition \ref{prop2} below) ensures a propagation of chaos (here the random phase assumption) over very long times. 

\item Weak convergence here means that we have to consider local averages in $k$ to obtain strong convergence of {\em coarse-grained} quantities at a scale $h$ larger than $1/N$.
The limits in $\varepsilon$ and $N$ in \eqref{result1} then depend on this coarse graining parameter $h$, but these limits commute once $h$ is considered as fixed. The effect of $\delta$ is to regularize the resonances of the linear operator and to introduce some dissipation reflecting the effect of the noise. Therefore when $\delta$ is fixed and $(N,\varepsilon) \to (\infty,0)$, averaging can be performed and some limit identified: when $\varepsilon$ becomes small the averaged terms can be isolated, and when $N \to \infty$ this averaged term has a continuous limit allowing the concentration on the resonant manifold. 
Precise statements with explicit error estimates are given in Theorem \ref{th1} and Corollary \ref{cor1}. 
\item  
Acting only in the angle of the Fourier coefficients, the system has a degenerate noise, and can be considered as a Langevin system  with small noise in large dimension. Equation \eqref{result1} expresses the fact that the {\em normal form} term at the order $2$ (or the {\em effective equation} in the terminology of \cite{K10,K13}) inherits dissipative properties in the actions $|U_k|^2$ from random forcing in the angles of $U_k$. We can thus interpret the kinetic equation as an effect of the {\em hypoellipticity} of the system which is due to the presence of the nonlinearity as in \cite{Martin}. 
\item The choice to work with the (KP) dispersion relation \eqref{omegak} is made to allow explicit calculations. The results presented here extends to general equations of the form \eqref{KPI} under the condition that we have: (i) a bounded but almost continuous set of frequencies, (ii) existence of an explicit invariant measure, and (iii) frequencies defining non degenerate resonant manifold. Concerning this last assumption, we can observe that the resonant manifold associated with \eqref{omegak} is made of two non intersecting branches of parabola (see Equation \eqref{resmani}) and the result could possibly be extended to other similar non degenerate situations up to some technical work. The presence of singular points, like $k = 0$ which is controlled here by \eqref{orage1}, or as appearing more drastically in the case of the nonlinear cubic models might possibly be considered but certainly requires more elaborate technics, see for instance \cite{germain}. 
\end{itemize} 
%Note that in \eqref{result1}, the two limits in $\varepsilon$ and $N$ can be taken independently, and that our result provides error estimates, and identify various limits in intermediate regimes. 

The second -complementary- result of this paper   shows that the role of the noise is crucial to obtain the kinetic equation with the same random initial data. In  {\em generic} situations, {\em i.e.} for almost all $\eta >0$ (see \eqref{omegak}) we have for fixed $t \in [0,T]$, 
\begin{equation}
\label{result2}
\lim_{N \to \infty}  \lim_{\varepsilon \to 0 }\,  \lim_{\delta \to 0}  N^\er \Big( \E\, |U_k(\frac{t}{ \varepsilon^2})|^2 - \sigma^2\Big) =  g_0(k) 
\end{equation}
contradicting \eqref{result1} in this specific asymptotic regime.   This result should not be a surprise for readers familiar with normal forms or averaging theory in a deterministic setting. We can interpret it  as the fact that in the absence of random forcing ($\delta = 0$) and in the regime where $\varepsilon \to 0$ first and then $N\to \infty$, the system \eqref{KPI} admits generically in $\eta$ a weak version of Birkhoff normal form reduction preserving the actions over long times, see for instance \cite{BG} and \cite{K13} in a stochastic setting. Equation \eqref{result2} thus expresses that the normal form at the order $2$ commutes with the actions and do not change their dynamics.   It can be explained by the fact that the discrete frequencies \eqref{omegak} which are given as traces of a continuous function on the discrete grid are generically  non resonant for almost all $\eta$, with a control of the small denominators depending on $N$. We thus see that in this regime, the random forcing is crucial to obtain a kinetic representation of the dynamics. 

%These results are very general in the sense that they can be extended to various classes of systems such as the nonlinear Schr\"odinger equation and other type of Rayleigh-Jeans spectra coming from invariants of the original systems. Note however that \eqref{result2} depends on the intersection of the resonant manifold with the discrete frequency grid (see for instance \cite{fgh},\cite{staf}  for the case of the Schr\"odinger equation) and is a weak version of Birkhoff normal form result (see for instance \cite{BG}), here in a random initial data setting.  

%Our choice to work with the KP equation originates from the fact that the resonant manifold is simple to analyse and does not contain any singular points (unlike the four-waves case, see for instance \cite{germain}). Moreover, the KP equation is integrable making the emergence of a kinetic equation without external forcing very unlikely, as shown by \eqref{result2} and the scattering results in \cite{Hadac}. 

It is not the first occurence of a linearized kinetic equation in the mathematical literature on wave turbulence. In \cite{LuSpohn} it has been proved that in a similar setting and without noise (i.e. $\delta = 0$), if the random initial are drawn with respect to the Gibbs invariant measure associated with the complete Hamiltonian, then whereas the quantities $\mathbb{E}\, |U_k(t)|^2$ and all the moments in the Fourier space are constant in time, the evolution of space-time covariances is driven by a linearized kinetic equation.   Note that this result concerns the deterministic case, where $N \to \infty$ first and then $\varepsilon \to 0$ and uses the dispersion effects of the linear operator. 

Here we consider random initial data drawn  as fluctuations of the invariant measure at a scale $\mathcal{O}(1/N^\er)$ generating a non trivial dynamics of the actions $\mathbb{E}\, |U_k(t)|^2$ and the hierarchy of moments. 
 Our results provide an example of situation where the presence of noise yields a kinetic description of the dynamics, while the deterministic system in the same regime does not. The justification of a nonlinear kinetic dynamics even in the stochastic regime remains an open problem. 
\medskip

{\bf Acknowledgements}. This work originates from many discussions with Laure Saint-Raymond and the author would like to thank her for many advises and comments. During the preparation of the manuscript, many help was also given by Charles-Edouard Br\'ehier, Beno\^it Cadre, Arnaud Debussche, Pierre Germain and Martina Hofmanova. It is a great pleasure to thank them all. 

\section{A stochastic model}

As explained in the previous section, a system of the form \eqref{KPI} can be obtained by several modelling, from nonlinear lattices to numerical discretization and regularizations of partial differential equations. In the sequel, we make the choice of considering the stochastic KP equation
\begin{equation}
\label{bruch}
\dd U =  -\partial_{xxx} U - \eta \partial_{x}^{-1} \partial_{yy} U \, \dd t + \varepsilon  \partial_x F (U) \, \dd t  +  \sqrt{2\delta}  \, \partial_x  U \circledast \dd W (t) 
\end{equation}
where $U(t,x,y)$ is random real process depending on $(x,y) \in \T_N^2$ where $\T_N^2 = (\R / (2\pi N \Z))^2$ is a large torus ($N \in \N$, $N >> 1$) and with zero average in the direction $x$, a property that allows to define $\partial_{x}^{-1}$ and is preserved by the flow. The parameter $\eta$ is fixed and will be only used as external parameter\footnote{In the sense of KAM theory, {\em i.e.}  to generically avoid resonances, see for instance \cite{BG}, \cite{ElKuk} for applications to PDEs.} for the resonance analysis in the deterministic case (that is when $\delta = 0$). Equivalently, we could have put this parameter in the definition of the discrete  frequency grid induced by the large torus. The function $F(U)$ is a real nonlinear function of $U$ which is a regularization of the function $U \mapsto U^2$ but acting only on a bounded set of frequencies: $F(U) = P(P U)^2$ where $P$ is a smooth projector on the bounded set of frequencies. Note that at the continuous limit $N \to \infty$, the number of frequencies goes to infinity like $\mathcal{O}(N^2)$ inside this bounded set. 
The nonlinearity and the noise --which is the combination of the convolution and the Stratonovich product-- are small and represented by the numbers $\varepsilon << 1$ and $\delta << 1$. 

%In the absence of noise, and without the regularization of the nonlinearity, this equation is the classical Kadomtsev--Petviashvili equation \cite{KP} which is known to be integrable. 

In the book of Nazarenko \cite{Naz1} another system, the Petviashvili model introduced in \cite{Pet1}, is also widely studied as a master model for three-wave turbulence. This equation takes the same form as \eqref{bruch} except that the linear operator is replaced by $\partial_{xxx} U + \partial_{x} \partial_{yy} U$. The analysis of this case in our setting is entirely similar to \eqref{bruch} except for the analysis of resonant manifold which is technically slightly simpler with the KP equation. Similarly, KP {\em lattices} (see \cite{duncan}) which naturally yields to model with bounded and almost continuous set of frequencies, present a similar structure with frequency involving typically trigonometric functions of $k$.   %Our choice to work with the Petviashvili model instead of the KP model is only motivated by some simplification in the presentation, in particular the analysis of the resonant manifolds, and also because the KP model is integrable and there is a priori no reason to obtain a wave turbulence description of the solution when $\delta \to 0$ is the smallest parameter going to $0$. 

We now describe in more detail the mathematical formulation of \eqref{bruch}. 

\subsection{Fourier coefficients and nonlinearity}
The set of frequencies associated with the two-dimensional torus $\T^2_N$ is denoted by 
$\Z_N^2 := \{ (\alpha_x,\alpha_y)/N\, | \, (\alpha_x,\alpha_y) \in \Z^2\}$. 
With a function $U(x,y)$, we associate the following normalized Fourier coefficients
$$
U_n = \frac{1}{2 \pi N} \int_{\T_N^2} e^{- i n \cdot (x,y) } U(x,y) \dd x \dd y, \quad n = (n_x,n_y) \in \Z_N^2
$$
where $n \cdot (x,y) = n_x x + n_y y$. 
With this normalization, we have for smooth enough functions
$$
U(x,y) = \frac{1}{2\pi N}\sum_{n \in \Z_N^2} e^{in\cdot( x,y)} U_n, 
$$
so that if  $U_n$ are random variable drawn with respect to a {\em random phase} measure, {\em i.e.} a measure such that $\E\,U_k \bar U_\ell =0$ for $k \neq \ell$ and $\E\,|U_k|^2 =  \sigma_k = \mathcal{O}(1)$, then we have for any measurable bounded set $B$, 
%\begin{equation*}
\begin{align*}
\E \int_B |U(x)|^2 \dd x &= \frac{1}{(2\pi N)^2} \sum_{k,\ell \in \Z_N^2} \int_{\T_N \cap B}e^{i(k - \ell) \cdot (x,y)} \E \, U_k \widebar U_\ell \dd x \\
&= \mathrm{Vol}(B)\frac{1}{(2\pi N)^2} \sum_{k \in \Z_N^2} \sigma_k =  \mathcal{O}(\mathrm{Vol}(B)),  
\end{align*}
%\end{equation*}
uniformly in $N$ if $\sigma_k$ is sufficiently decaying and smooth (typically we will consider $\sigma_k$ as the trace on the grid $\Z_N^2$ of a smooth function with compact support in $\R^2$). 
We now define the following bounded set of frequencies. Let 
$$
\mathcal{D}^+ \subset  \{ (k_x,k_y) \in \R^2\, | \, k_x > 0\} 
$$
be a smooth bounded domain. We define $\mathcal{D}^- = \{ (-k_x,k_y) \in \R^2\, | \, k_x \in \mathcal{D}^+\}$ the symmetric of $\mathcal{D}^+$ with respect to the axis $\{ k_x = 0\}$, and we set 
$$
\mathcal{D} = \mathcal{D}^+ \cup \mathcal{D}^-.
$$
We define moreover the fine grids
$$
\mathcal{D}_N = \mathcal{D}_N^+ \cup \mathcal{D}_N^-, \quad \mathcal{D}_N^{\pm} = \mathcal{D}^{\pm} \cap \Z_N^2. 
$$
corresponding to the discrete set of frequencies. Note that $\mathrm{Card} \, \mathcal{D}_N = \mathcal{O}(N^2)$.
\begin{figure}[ht]
\begin{center}
\rotatebox{0}{\resizebox{!}{0.24\linewidth}{%
  \includegraphics{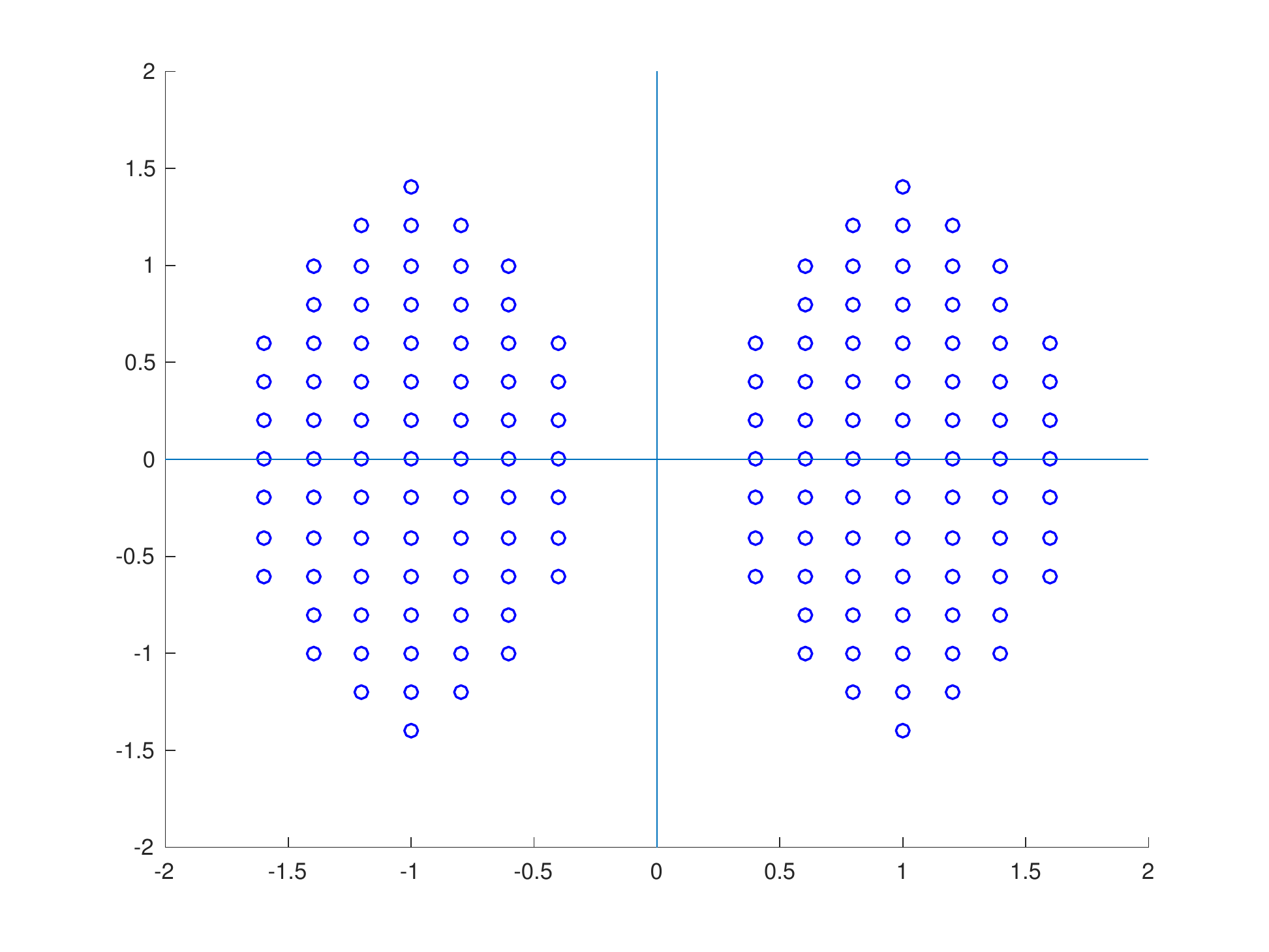}}}
  \rotatebox{0}{\resizebox{!}{0.24\linewidth}{%
  \includegraphics{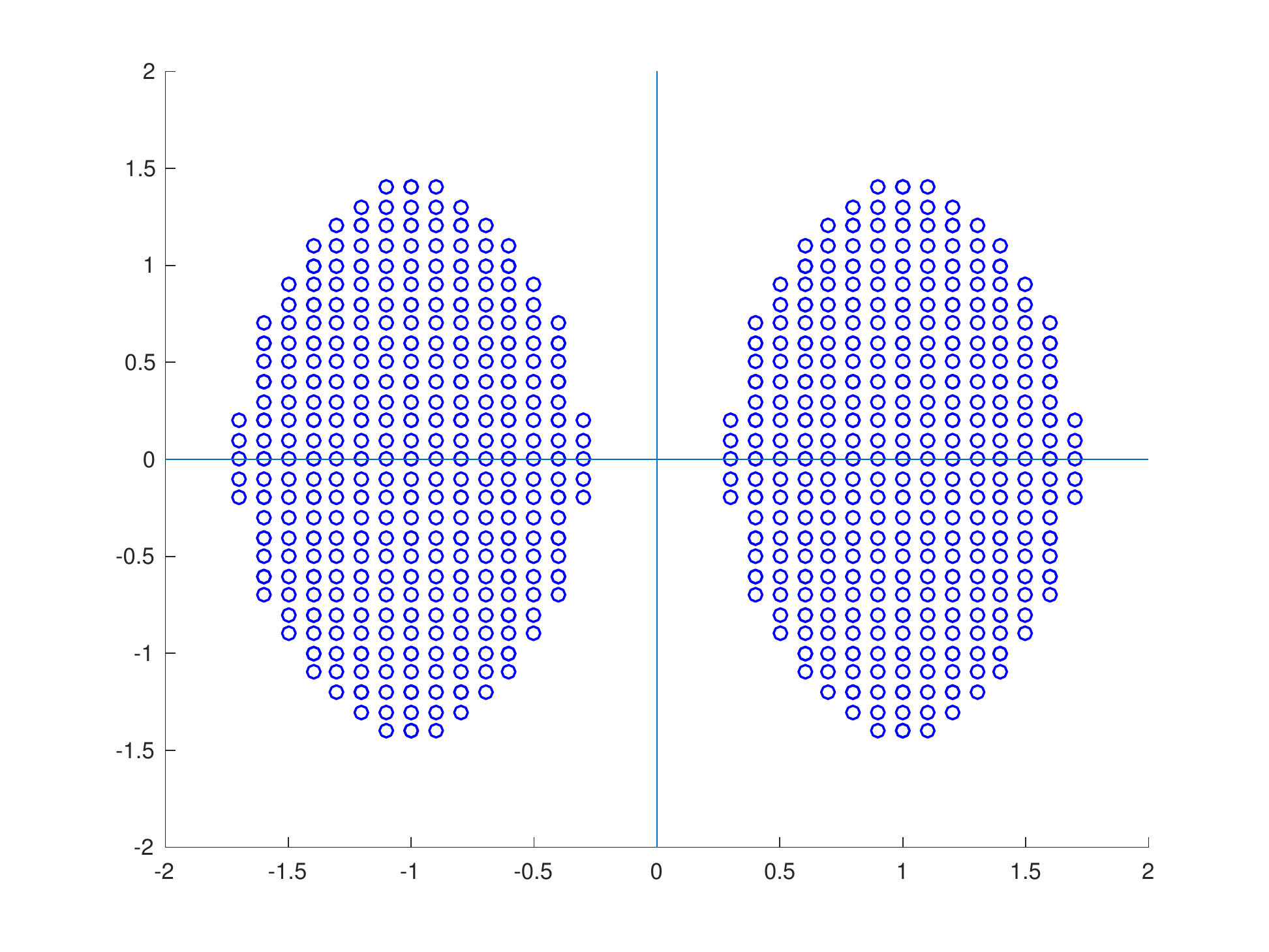}}}
  \rotatebox{0}{\resizebox{!}{0.24\linewidth}{%
  \includegraphics{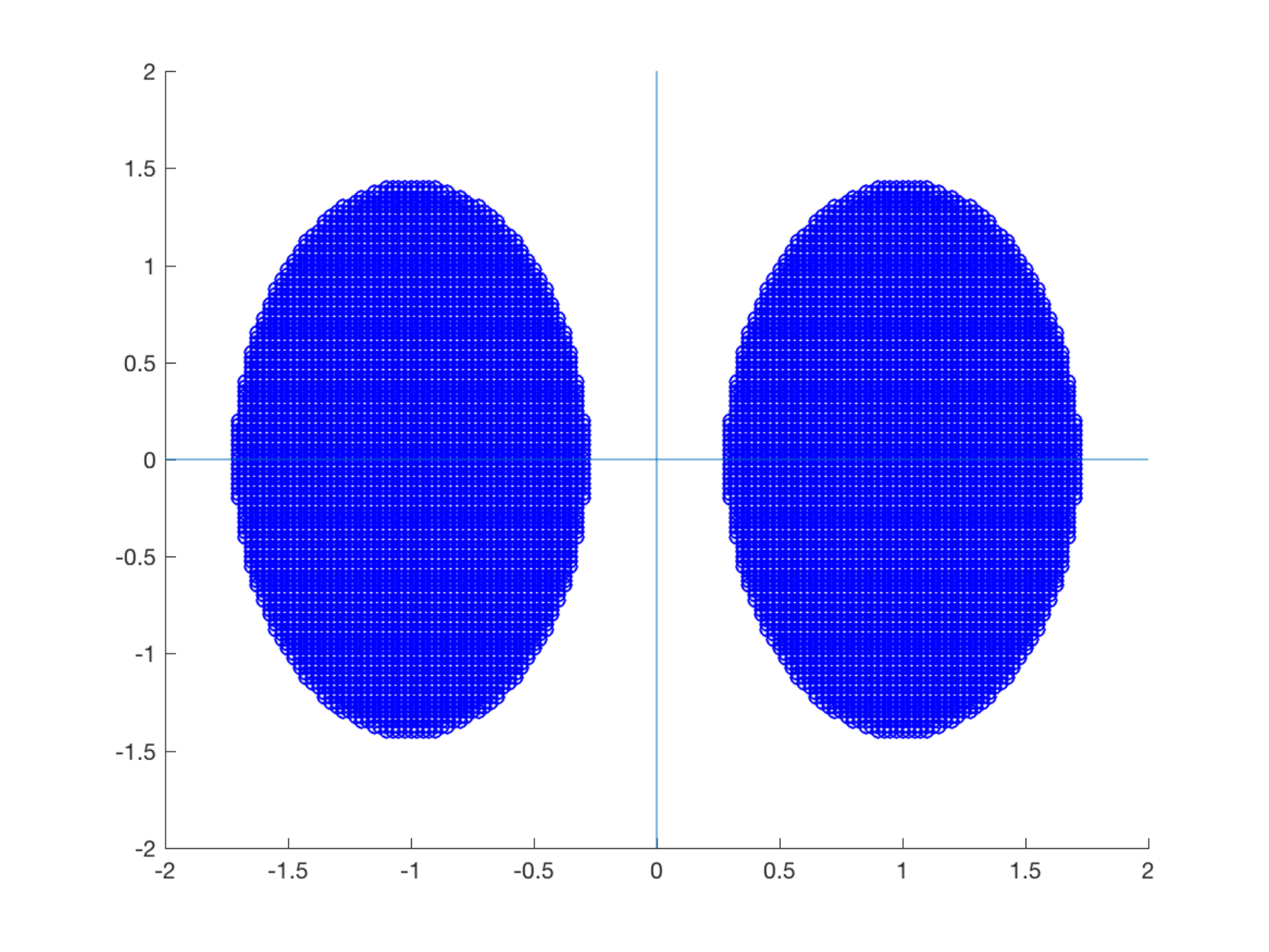}}}
  
  %\includegraphics[width=5cm,height=5cm,angle=-0]{dessin2.eps}}}
  %\includegraphics[width=5cm,height=5cm,angle=-0]{dessin4.eps}}}
   %\rotatebox{0}{\resizebox{!}{0.25\linewidth}{%
   %\includegraphics{dessin1.eps}}}
\end{center}

\caption{Examples of sets of frequencies $\mathcal{D}_N$, for increasing values of $N$.  }
\end{figure}
 Let us fix $\psi^+:\R^2 \to \R_+$ a smooth non-negative function with support in $\mathcal{D}^+$ (a regularized indicator function of $\mathcal{D}^+$) and let $\psi = \psi^+ + \psi^{-}$ where $\psi^-(k_x,k_y) = \psi^{+}(-k_x,k_y)$.  
The projector $P$ is defined in Fourier by the formula $(P U)(k) = \psi(k) U_k$, $k \in \Z_N^2$ and  the nonlinearity is defined by the formula $F(u) = P (Pu)^2$. Hence $F(U)$ acts only on the Fourier coefficients in the set $\mathcal{D}_N$. 

\subsection{Random phase stochastic forcing}
The stochastic term 
represents a forcing in the angles of Fourier variables.  The process $W(t,x,y)$ is an $ \mathcal{O}(N^2)$ dimensional Brownian motion defined by 
$$
W(t,x,y) = \frac{1}{(2\pi N)^2} \sum_{n \in \mathcal{D}_N}  \frac{1}{ n_x} W_n(t) e^{in \cdot (x,y)},
$$
where $W_n(t)$, $n \in \mathcal{D}_N^+$, is a sequence of independent real Wiener process on a filtered probability space $(\Omega,\mathcal{F},(\mathcal{F}_t)_{t\geq 0},\mathbb{P})$ satisfying the usual conditions. For all $n \in \mathcal{D}_N$, we assume $W_{-n} = -W_{n}$ ensuring that $W(t,x,y)$ is real. The (skew-symmetric) factor $1/ n_x$ is purely convenient for the next calculations. Remark that on  $\mathcal{D}_N$, $n_x$ takes values in two finite intervals bounded away from $0$ and independent of $N$, so that at the continuous limit, the function $1/n_x$ is smooth on bounded on $\mathcal{D}$, and $W(t,x,y)$  converges essentially to a white noise when $N \to \infty$, see for instance \cite{daprato}.

 The symbol $\circledast$ means the combination of the convolution in $(x,y)$ and the Stratonovich product: 
$$
 (U \circledast \dd W(t)) (x,y)  := \int_{\T_N^2} U(t,x-x',y-y') \circ \dd W(t,x',y') \dd x'\dd y'. 
$$
With these notations, the equation in terms of the Fourier coefficients
is written
\begin{equation}
\label{mason}
\forall\, n \in \mathcal{D}_N,\qquad
\dd  U_n = i \omega_n U_n \, \dd t + i  \sqrt{2\delta} \, U_n \circ \dd W_n(t)  + i \frac{\varepsilon }{N} n_x \sum_{n = k + \ell} \chi_{nk\ell} U_k U_\ell\, \dd t, 
\end{equation} 
with $\omega_k = k_x^3 + \eta k_x^{-1} k_y^2$ which is well defined for $k \in \mathcal{D}$, and $\chi_{nk\ell} = \psi(n) \psi(k) \psi(\ell)$. As $U$ is a real function, the Fourier coefficients $U_k$ satisfy 
$$
\forall\, t \in \R, \quad \forall\, k\in  \Z_N^2 \quad U_{-k}(t) = \overline{U_k(t)}, 
$$
as can be check directly from the relations $\omega_{-n} = - \omega_n$ and  $W_{-n} = -W_n$. 
Moreover, the presence of the frequency cut-off makes the dynamics completely decoupled between the frequencies in $\mathcal{D}_N$ and the frequencies in $\Z_N^2 \backslash \mathcal{D}_N$, for which the evolution is simply given by the free evolution $e^{it \omega_n} U_n(0)$. 
In the remainder of this paper, we will thus consider the equation \eqref{mason} for $n \in \mathcal{D}_N$ only, possibly extended by natural continuity on a small neighborhood of $\mathcal{D}_N$  when needed (for the definition of coarse-grained momenta). %, that is we will assume $U_n = 0$ for $n \notin \mathcal{D}_N$. 

Note that for $\lambda \in \R$, the scalar equation 
$$\dd U_n =  i  \lambda U_n \circ \dd W_n(t)$$ is equivalent to the It\^o equation 
$\dd U_n = i \lambda U_n\dd W - \frac{1}{2} \lambda^2 U_n \dd t$, and 
is explicitly solved. We have $$U_n(t) = e^{i \lambda W_n(t)}$$ and in particular as $W_n$ is real, $|U_n(t)|^2 = |U_n(0)|^2$. The noise is thus acting only on the angles of the Fourier coefficients and models the {\em Random Phase Assumption} that is invoked in wave turbulence (see in particular \cite{MMT}, \cite{Naz1}). 

By It\^o calculus, and using the symmetries of the equation, we can easily see that the equation \eqref{mason} possesses as natural invariant the $L^2$ norm
\begin{equation}
\label{L2norm}
M_N(U,\widebar U) := \sum_{n \in \mathcal{D}_N} |U_n|^2.  
\end{equation}
Note that as this sum is finite, this ensures the global existence of solutions of the stochastic system \eqref{mason} for any fixed deterministic initial datum. 

\subsection{Hamiltonian formulation}

Following \cite{Naz1}, we can write the KP system as a Hamiltonian system on $\mathcal{D}_N^+$. In presence of noise, that is when $\delta > 0$, we will see that it falls into the class of  stochastic systems preserving the Hamiltonian structure introduced %by Milstein, Repin and Tretyakov, 
in \cite{Milstein}. As the noise only acts on the angles and not on the amplitudes, we will see that up to a polar decomposition, the system admits a formulation similar to the classical Langevin equation. 
Let us make the change of unknown 
$$
V_n = \frac{U_n}{\sqrt{|n_k|}}. 
$$
The equation becomes, for $n \in \mathcal{D}_N$, 
\begin{equation}
\label{mason2}
\dd  V_n = i \omega_n V_n \, \dd t + i \sqrt{2\delta} \, V_n \circ \dd W_n(t)  +  i \frac{\varepsilon}{N} \mathrm{sign}( n_x)  \sum_{\substack{n = k + \ell \\ k,\ell\in \mathcal{D}_N}} \Psi_{nk\ell} V_k V_\ell\, \dd t. 
\end{equation} 
where $\Psi_{nk\ell} = \sqrt{|n_x||k_x||\ell_x|} \psi(n)\psi(k)\psi(\ell)$. Note that this term is symmetric in $(n,k,\ell)$ and invariant by change of sign of $n,k,\ell$. 
Note moreover that $\Psi_{nk\ell}$ is the trace on the grid $\Z_N^2$ of a smooth function defined on $\R^2$. We will by a slight abuse of notation continue to write 
$\Psi_{nk\ell}$ at the continuous limit, that is when $n,k$ and $\ell$ will take values in $\R^2$. 
As $V_{n} = \widebar V_{-n}$, we can decompose for $n \in \mathcal{D}_N^+$ the sum  
\begin{eqnarray*}
\sum_{n = k + \ell} \Psi_{nk\ell} V_k V_\ell &=& \sum_{\substack{n = k + \ell \\k_x,\ell_x > 0}} \Psi_{nk\ell}  V_k V_\ell 
+2 \sum_{\substack{n = -k + \ell \\k_x,\ell_x > 0}} \Psi_{nk\ell} \widebar V_k V_\ell\\
&=& \sum_{\substack{n = k + \ell }} \Psi_{nk\ell}^+  V_k V_\ell 
+2 \sum_{\substack{n = -k + \ell }} \Psi_{nk\ell}^+ \widebar V_k V_\ell, 
\end{eqnarray*}
where 
$
\Psi_{nk\ell}^+ = \sqrt{|n_x||k_x||\ell_x|} \psi^+(n)\psi^+(k)\psi^+(\ell)$, 
and the equation can thus be written, for $n \in \mathcal{D}_N^+$
\begin{equation}
\label{mason3}
\dd  V_n = i \omega_n V_n \, \dd t + i \sqrt{2\delta}  \, V_n \circ \dd W_n(t)  + i \frac{\varepsilon}{N}  \sum_{n = k + \ell} \Psi_{nk\ell}^+ V_k V_\ell \dd t  + 2i\frac{\varepsilon}{N}  \sum_{n = - k + \ell} \Psi_{nk\ell}^+ \widebar{V}_k V_\ell\, \dd t. 
\end{equation} 
We can write this equation under the Hamiltonian form 
$$
\forall\, n \in \mathcal{D}_N^+,\qquad 
\dd V_n = 2 i \frac{\partial H_N^\varepsilon}{\partial {\widebar{V}_n}} (V,\bar V)\dd t + i \sqrt{2\delta }\, V_n \circ \dd W_n(t), 
$$
where $V = (V_n)_{n \in \mathcal{D}_N^+}$, and where 
$
H_N^\varepsilon = \Omega_N + \varepsilon K_N  
$
with 
\begin{equation}
\label{HOK}
\Omega_N (V,\widebar{V}) = \frac12 \sum_{k \in \mathcal{D}_N^+} \omega_k |V_k|^2 \quad \mbox{and} \quad K_N (V,\widebar{V}) = \frac{1}{2N}\sum_{\substack{k+\ell - m = 0 \\k,\ell,m \in \mathcal{D}_N^+ }} \Psi_{k\ell m}^+( V_k V_\ell\widebar{V}_m + \widebar V_k \widebar V_\ell V _m). 
\end{equation}
Introducing the real processes $(P_k,Q_k)$ such that 
$$
V_k = P_k + i Q_k, \quad k \in \mathcal{D}_N^+, 
$$
we can write the Hamiltonian $H_N$ as function of the variables $(P,Q) := \{ (P_k,Q_k)\, |\, k \in \mathcal{D}_n^+\}$, and \eqref{mason3} can be written 
\begin{equation}
\label{mason5}
\left\{ 
\begin{array}{rcl}
\dd P_n &=& - \omega_n Q_n \dd t  - \sqrt{2 \delta} \, Q_n \circ \dd W_n -  \varepsilon \displaystyle \frac{\partial K_N}{\partial {Q_n}}(P,Q)\dd t, \\[2ex]
\dd Q_n &=& \omega_n P_n \dd t +  \sqrt{2 \delta} \, P_n \circ \dd W_n +  \varepsilon \displaystyle\frac{\partial K_N}{\partial {P_n}}(P,Q)\dd t.
\end{array}
\right.
\end{equation}
Note that this system falls into the framework of stochastic systems preserving the Hamiltonian structure introduced in \cite{Milstein}. 
As the $L^2$ norm of the original system is invariant, we can check that the quantities 
\begin{equation}
\label{eqinv}
M_N(P,Q) =  \sum_{k \in \mathcal{D}_N^+} \frac{1}{\gamma_k} ( P_k^2 + Q_k^2) =  \sum_{k \in \mathcal{D}_N^+} \frac{1}{\gamma_k} |V_k|^2 , \qquad  \gamma_k := \frac{1}{k_x} > 0
\end{equation}
are invariant along the trajectories on \eqref{mason3}, and correspond to the invariant $M_N(U,\widebar U)$ in the original variable, see \eqref{L2norm}. 
%where we can assume that $\alpha_k > 0$. 

Eventually, let us consider the symplectic variables $(A_k,\Theta_k) \in (\R_+ \times \T)^{\mathcal{D}_N^+}$ defined by the formula 
$$
V_k = P_k + i Q_k = \sqrt{2 A_k} e^{i \Theta_k}, \quad k \in \mathcal{D}_N^+. 
$$
Then we can write the previous system under the  form 
\begin{equation}
\label{mason6}
\left\{ 
\begin{array}{rcl}
\dd A_n &=&  -  \varepsilon \displaystyle \frac{\partial K_N}{\partial {\Theta_n}}(A,\Theta) \dd t \\[2ex]
\dd \Theta_n &=& \omega_n \dd t + \sqrt{2 \delta}  \dd W_n  + \varepsilon \displaystyle\frac{\partial K_N}{\partial {A_n}}(A,\Theta)(P,Q) \dd t
\end{array}
\right.
\end{equation}
where $K_N(A,\Theta) = K_N(P,Q)$ for $(A,\Theta) = (A_k,\Theta_k)_{k \in \mathcal{D}_N^+}$. We thus see that the system is a degenerate stochastic system (the noise being active only on half of the variables) similar to the Langevin system.

\section{Invariant measures and random initial data}

We will consider the system \eqref{mason5} with random initial data $(P_k(0,\omega),Q_k(0,\omega))_{k \in \mathcal{D}_N^+}$ drawn with respect to a probability density function $\rho_N(0,p,q)$ with $$(p,q) := (p_k,q_k)_{k \in \mathcal{D}_N^+} \in (\R \times \R)^{\mathcal{D}_N^+} =: \mathcal{R}^N.$$ The function $\rho_N(0,p,q)$ is assumed to be smooth, non-negative, and such that  
$$
\int_{\mathcal{R}^N} \rho_N(0,p,q) \dd p \,\dd q = 1,
$$
where $\dd p \,\dd q := \prod_{k \in \mathcal{D}_N^+} \dd p_i\dd q_i$ is the standard Lebesgue measure on $\mathcal{R}^N$. 
It means that for a measurable domain $\mathcal{O}^N \subset \mathcal{R}^N$, we have 
$$
\mathbb{P} ( (P_k(0,\omega),Q_k(0,\omega))_{k \in \mathcal{D}_N^+} \in \mathcal{O}^N) = \int_{\mathcal{O}^N} \rho_N(0,p,q) \dd p \,\dd q. 
$$
 Using It\^o calculus, the equation for the density $\rho_N(t,p,q)$ of the probability law of the system a time $t$ is given by 
\begin{eqnarray}
\nonumber
\partial_t \rho_N &=& 
\delta L_N \rho_N    - \{ H_N^{\varepsilon}, \rho_N\}
\\\label{FKP}
&=&  \delta \sum_{k \in \mathcal{D}_N^+} ( q_k \partial_{p_k} -  p_k \partial_{q_k})^2 \rho_N  +\sum_{k \in \mathcal{D}_N^+} \omega_k ( q_k \partial_{p_k} -  p_k \partial_{q_k}) \rho_N  -\varepsilon \{ K_N, \rho_N\}, 
\end{eqnarray}
with the Poisson bracket 
$$
\{H,G\} = \sum_{k \in \mathcal{D}_N^+} \partial_{p_k} H \partial_{q_k} G - \partial_{q_k} H \partial_{p_k} G. 
$$
The operator $L_N = \sum_{k \in \mathcal{D}_N^+} ( q_k \partial_{p_k} -  p_k \partial_{q_k})^2$ can be easily interpreted in action angle variables: settting $p_k + i q_k = \sqrt{2a_k}e^{i \theta_k}$ for $(a_k,\theta_k)\in \R_+ \times \T$, $k \in \mathcal{D}_N^+$, we have $L_N = \sum_{k \in \mathcal{D}_N^+} \partial_{\theta_k}^2$ corresponding to a diffusion in the angle variables in the system \eqref{mason6}. The linear operator in 
\eqref{FKP} can thus be written $ \sum_{k \in \mathcal{D}_N^+}( - \omega_k \partial_{\theta_k} + \delta\partial_{\theta_k}^2)$ and we thus see that the noise acts as a regularization of the transport operator $  -\sum_{k \in \mathcal{D}_N^+}\omega_k \partial_{\theta_k}$ which is degenerate along the resonant manifolds. 

For a given observable $G:\mathcal{R}^N \to \C$, if $(P(t),Q(t)) = (P_k(t,\omega),Q_k(t,\omega))_{k \in \mathcal{D}_N^+}$ denotes the solution of \eqref{mason5}, we will write 
\begin{equation}
\label{brack}
\langle G(t) \rangle := \E\, G(P(t),Q(t)) = \int_{\mathcal{R}^N} G(p,q) \rho_N(t,p,q) \dd p \,\dd q. 
\end{equation}
For such a function we will write similarly 
$
\E\, G(V(t),\widebar V(t) ) 
$
the previous expectation for the function defined with an abuse of notation by $G(p,q) = G(v,\widebar v)$ with $v = (v_k)_{k \in \mathcal{D}_N^+} \in \C^{\mathcal{D}_N^+}$ satisfying $v_k = p_k + iq_k$. Typically, we will consider the evolution of momenta of the form $ \E \, V_k (t)V_m(t)\widebar V_\ell(t) $ for $k,\ell,m \in \mathcal{D}_N^+$, and the relation $V_k(t) = P_k(t) + i Q_k(t)$ will always hold implicitly. 

\subsection{Invariant measures}

Let us set
$$
\mu_N (p,q) = \frac{1}{Z_N} \exp\Big( - \sum_{k \in \mathcal{D}_N^+} \frac{1}{\gamma_k} \big(p_k^2 + q_k^2\big) \Big)
\quad
\mbox{ where }
\quad
Z_N =  \prod_{k \in \mathcal{D}_N^+} \pi \gamma_k. 
$$
We can write $\mu_N(p,q)  =(Z_N)^{-1} \exp( -  M_N(p,q) )$, where $M_N$ is the invariant function given by \eqref{eqinv}. 
Note that we could also have considered the family of  measures with probability density functions proportional to $\exp( - \beta M_N(p,q) )$ for some 
$\beta > 0$, but we fix in the remainder of the paper $\beta = 1$ for simplicity  (or equivalently with the formalism of the introduction, $\sigma = 1$ in \eqref{leonard}). 
In the sequel we will use the notation 
\begin{equation}
\label{eqnu}
\mu_N(p,q) = \prod_{k \in \mathcal{D}_N^+} \varphi(\gamma_k,p_k,q_k)\quad\mbox{where}\quad
\varphi(\gamma,x,y) := \frac{1}{\gamma\pi}e^{- \frac{1}{\gamma} (x^2 + y^2)}
\end{equation}
is the standard Gaussian probability density on $\R^2$ with variance $\gamma$. 
\begin{proposition}
The  density $\mu_N (p,q)$ is invariant by the evolution of \eqref{mason5}. We have 
$$
 L_N \mu_N = 0\quad \mbox{and} \quad \{ H_N^\varepsilon, \exp( -  M_N(p,q) )\} = 0. 
$$
so that $\mu_N$ is a stationary solution of the Fokker-Planck equation \eqref{FKP}. 
\end{proposition}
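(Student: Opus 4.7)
The plan is to verify the two algebraic identities $L_N\mu_N=0$ and $\{H_N^\varepsilon,\exp(-M_N)\}=0$ separately, and then conclude by plugging them into the Fokker--Planck equation \eqref{FKP}, which reduces to $\partial_t\mu_N = 0$.

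For the first identity, I would exploit the rotational invariance of each factor of $\mu_N$. By \eqref{eqnu}, $\mu_N$ is the product of radial Gaussians $\varphi(\gamma_k,p_k,q_k)$, so it depends on $(p_k,q_k)$ only through the squared modulus $p_k^2+q_k^2$. Since the first-order operator $q_k\partial_{p_k}-p_k\partial_{q_k}$ generates rotations in the $(p_k,q_k)$-plane, it annihilates any function of $p_k^2+q_k^2$ (directly, $(q_k\partial_{p_k}-p_k\partial_{q_k})(p_k^2+q_k^2)=2p_kq_k-2p_kq_k=0$). Therefore each summand of $L_N$ annihilates $\mu_N$, and $L_N\mu_N=0$. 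Equivalently, in the action--angle coordinates of \eqref{mason6} one has $L_N=\sum_k\partial_{\theta_k}^2$ and $\mu_N$ depends only on the actions $a_k$.

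For the second identity, the key observation is that $M_N(p,q)$ defined in \eqref{eqinv} is exactly the pullback of the $L^2$ norm \eqref{L2norm} under the substitutions $V_k = U_k/\sqrt{|k_x|}$ and $V_k = P_k+iQ_k$, and this quantity has already been recognized as a conserved quantity of the deterministic Hamiltonian flow associated with $H_N^\varepsilon=\Omega_N+\varepsilon K_N$. In other words, $\{H_N^\varepsilon,M_N\}=0$. By the chain rule for Poisson brackets,
\[
\{H_N^\varepsilon,\exp(-M_N)\}=-\exp(-M_N)\,\{H_N^\varepsilon,M_N\}=0,
\]
as required. If one wanted to check this directly from the explicit formulas, the quadratic part $\{\Omega_N,M_N\}$ vanishes mode by mode by the same radial argument as above (each mode contributes $\omega_n\gamma_n^{-1}(2p_nq_n-2p_nq_n)=0$), while for the cubic piece one uses the symmetry of $\Psi^+_{k\ell m}$ in its indices together with the convolution constraint $k+\ell-m=0$ and the specific weight $\gamma_k=1/k_x$, so that the sums telescope after relabeling.

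Putting the two identities into \eqref{FKP} yields $\partial_t\mu_N=\delta L_N\mu_N-\{H_N^\varepsilon,\mu_N\}=0$, which is exactly the statement that $\mu_N$ is a stationary solution of the Fokker--Planck equation, and hence an invariant probability density for \eqref{mason5}. The only non-trivial input is the conservation of $M_N$ under the deterministic dynamics, so that step would be the sole point requiring care were one to unfold it in full; everything else is either a direct consequence of the radial form of $\mu_N$ or a bookkeeping application of the chain rule.
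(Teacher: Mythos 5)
Your proof is correct and follows essentially the same route as the paper: the rotational invariance of each Gaussian factor gives $L_N\mu_N=0$, and the chain rule for Poisson brackets reduces $\{H_N^\varepsilon,\exp(-M_N)\}=0$ to the conservation of $M_N$ under the Hamiltonian flow, which the paper likewise takes as already established. Your additional sketch of how to verify $\{K_N,M_N\}=0$ directly is a harmless elaboration beyond what the paper records.
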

\begin{proof}
It is a direct consequence of the fact that $M_N(p,q)$ is an invariant of the system. We calculate that 
$$
\{ H_N^\varepsilon, \exp( -  M_N(p,q) )\} = -  \exp( -  M_N(p,q) ) \{ H_N^\varepsilon,M_N(p,q) \} = 0,
$$
and as  $( q_k \partial_{p_k} -  p_k \partial_{q_k}) M_N(p,q) = 0$, we have automatically $L_N \mu_N   = 0$. 
\end{proof}
Note that these densities are tensor products, and sampling according to the probability density $\rho_N(0,p,q) = \mu_N(p,q)$ is simply taking
\begin{equation}
\label{RandV}
P_k(0,\omega) \sim \mathcal{N}\big(0,\frac{\gamma_k}{2 }\big) , \qquad Q_k(0,\omega) \sim \mathcal{N}\big(0,\frac{\gamma_k}{2  }\big) , \quad k \in \mathcal{D}_N^+
\end{equation}
as independent random variables with centered Gaussian law of variance $\gamma_k/{2 }$. As the measure is invariant, any initial value starting with such condition will have the same distribution for all times. We will have in particular $\E\, G(P(t),Q(t)) = \E\, G(P(0),Q(0)) = \int_{\mathcal{R}^N} G(p,q) \mu_N(t,p,q) \dd p \,\dd q$ for all times $t \geq 0$.

As we will observe, this invariant measure is a {\em random phase} invariant measure. 
To detail this fact, we introduce some notations that will be useful for later analysis. 
\begin{notation}
\label{nota}
We will  write $\int f$ for expressions of the form $\int_{\mathcal{R}^N} f(p,q)  \dd p\, \dd q$. Moreover, 
let $\xi = (\xi_k)_{k \in \mathcal{D}_N^+}$ and $\zeta = (\zeta_k)_{k \in \mathcal{D}_N^+}$ be elements of $\N^{\mathcal{D}_N^+}$. We define $|\xi|^2 = \sum_{k \in \mathcal{D}_N^+} |\xi_k|^2$,  $\sigma(\xi) = \sum_{k \in \mathcal{D}_N^+} |\xi_k|$, 
\begin{equation*}
%\label{nota}
v^{\xi} = \prod_{k \in \mathcal{D}_N^+} v_k^{\xi_k}, \quad \mbox{and} \quad \bar v^{\zeta} = \prod_{k \in \mathcal{D}_N^+} \bar v_k^{\zeta_k},  
\end{equation*}
and similar convention for the random variable $V^\xi$ and $\widebar{V}^\zeta$. 
For a given $n \in \mathcal{D}_N^+$, we also set  
\begin{equation}
\label{eq:1}
1_n = (\delta^n_k)_{k \in \mathcal{D}_N^+},
\end{equation} where $\delta^n_k$ is the standard Kronecker symbol. 
\end{notation}

\begin{lemma}
With the previous notations, we have for $\xi$ and $\zeta$ in $\N^{\mathcal{D}_N^+}$, 
\begin{equation}
\label{orthog}
\E_{\mu_N}\, V^\xi \widebar{V}^{\zeta} = 
\int v^\xi \bar v^\zeta \mu_N = 0 \quad \mbox{if} \quad \xi \neq \zeta. 
\end{equation}
\end{lemma}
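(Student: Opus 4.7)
The plan is to exploit the product structure of $\mu_N$: by \eqref{eqnu} the density $\mu_N$ factors as a tensor product of two-dimensional Gaussians, one for each mode $k\in \mathcal{D}_N^+$, with $\{(P_k,Q_k)\}_k$ independent under $\mu_N$. Hence by Fubini,
\begin{equation*}
\int v^\xi \bar v^\zeta \mu_N \;=\; \prod_{k \in \mathcal{D}_N^+} \int_{\R^2} v_k^{\xi_k}\,\bar v_k^{\zeta_k}\, \varphi(\gamma_k, p_k, q_k)\, \dd p_k\, \dd q_k,
\end{equation*}
with the convention $v_k = p_k + iq_k$. So it suffices to show that each one-mode integral vanishes whenever $\xi_k \neq \zeta_k$.

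For a single mode, I would pass to polar (action-angle) coordinates by writing $v_k = \sqrt{2 a_k}\, e^{i\theta_k}$ with $(a_k,\theta_k)\in \R_+\times [0,2\pi)$, as was already done for \eqref{mason6}. A direct Jacobian calculation gives $\dd p_k\,\dd q_k = \dd a_k\,\dd \theta_k$, and since $\varphi(\gamma_k,p_k,q_k) = (\gamma_k \pi)^{-1} e^{-2a_k/\gamma_k}$ depends only on $a_k$, the integral splits as
\begin{equation*}
\int_{\R^2} v_k^{\xi_k}\bar v_k^{\zeta_k}\varphi(\gamma_k, p_k, q_k)\dd p_k\,\dd q_k
= \frac{1}{\gamma_k \pi}\Bigl(\int_0^\infty (2a_k)^{(\xi_k+\zeta_k)/2} e^{-2 a_k/\gamma_k}\dd a_k\Bigr)\Bigl(\int_0^{2\pi} e^{i(\xi_k-\zeta_k)\theta_k}\dd \theta_k\Bigr).
\end{equation*}

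The angular factor equals $2\pi$ if $\xi_k = \zeta_k$ and $0$ otherwise. Assuming $\xi \neq \zeta$, by definition there exists at least one index $k_0 \in \mathcal{D}_N^+$ with $\xi_{k_0} \neq \zeta_{k_0}$, so the corresponding angular integral vanishes and the whole product is zero. No step should cause difficulty — the lemma is essentially the statement that the invariant law is rotationally invariant in each Fourier phase, which is exactly the random phase property that $\mu_N$ is designed to encode — so I would keep the argument short and move on.
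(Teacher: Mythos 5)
Your proof is correct and follows essentially the same route as the paper: factor $\mu_N$ as a tensor product over modes, pass to the symplectic polar coordinates $v_k=\sqrt{2a_k}e^{i\theta_k}$ (with $\dd p_k\,\dd q_k=\dd a_k\,\dd\theta_k$), and observe that the angular integral kills any factor with $\xi_k\neq\zeta_k$. Nothing to add.
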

\begin{proof}
We have 
\begin{eqnarray*}
\int v^\xi \bar v^\zeta \mu_N
 &=& \frac{1}{Z_N} \prod_{k \in \mathcal{D}_N^+} \int_{\R^2} (p_k + iq_k)^{\xi_k} (p_k - iq_k)^{\zeta_k} e^{- \frac{1}{\gamma_k} (p_k^2 + q_k^2)} d p_k dq_k\\
&=& \frac{1}{Z_N} \prod_{k \in \mathcal{D}_N^+} \int_{ \R^+ \times \T} (\sqrt{2a_k})^{\xi_k + \zeta_k} e^{i \theta_k (\xi_k - \zeta_k)} e^{- 2  \frac{1}{\gamma_k}  a_k} \dd a_k \dd \theta_k, 
\end{eqnarray*}
where have have performed the symplectic change of coordinates $p_k + i q_k = \sqrt{2a_k} e^{i \theta_k}$ with $(a_k,\theta_k) \in \R^+ \times \T$. Now if 
$\xi \neq \zeta$, there exists $k$ such that $\xi_k \neq \zeta_k$ and the previous integral is zero. 
\end{proof}
For random variables satisfying \eqref{RandV}, we have using the previous result for $\xi = 1_k$ and $\zeta = 1_\ell$, 
$$
\E_{\mu_N} V_k \widebar V_\ell = 0, \qquad k \neq \ell, 
$$
and for all $k \in \mathcal{D}_N^+$
\begin{eqnarray}
\label{Eref}
\E_{\mu_N} |V_k|^2  =\E_{\mu_N} \big(|P_k|^2 + |Q_k|^2\big) &=& \int_{\mathcal{R}^N} (p_k^2 + q_k^2) \mu_N(p,q) d p \,d q \\
&=&  \frac{1}{\pi \gamma_k} \int_{\R^2} (p_k^2 + q_k^2) e^{- \frac{1}{\gamma_k} ( p_k^2 + q_k^2) {}}d p_k d q_k = \gamma_k. \nonumber %= \alpha_k \int a_k e^{-\alpha_k a_k } d a_k = \frac{1}{\alpha_k} \int_{x > 0} x e^{-x} d x = \frac{1}{\alpha_k}
\end{eqnarray} 
Note that if an initial value $(P_k(0), Q_k(0))$ of the system \eqref{mason5} is drawn with respect to the density probability function $\mu_N$, then the previous expectations remain constant in time. As we will see, 
\begin{equation}
\label{eqgamma}
\gamma_k = \frac{1}{ |k_x|}, \qquad k = (k_x,k_y) \in \mathcal{D}
\end{equation}
 is the Rayleigh-Jeans solution of the wave kinetic equation corresponding to the equirepartition of energy according to the $L^2$  norm of the original system, see \cite{Naz1}.  

\subsection{Random initial data}

We consider an initial probability density function $\rho_{N}(0,p,q)$  for given $N$ large enough satisfying the following hypothesis.
\begin{hypothesis}
\label{hypinit}
There exists constants $\er \geq 1$,  $C_0$ and $N_0$ and a smooth function $g_0: \mathcal{D}^+ \to \R$ such that for all $N \geq N_0$, 
\begin{equation}
\label{hypg}
\forall\, k \in \mathcal{D}_N^+\, \qquad   \E_{\rho_{N}(0)} (P_k^2 + Q_k^2) = \gamma_k +  \frac{g_0(k)}{N^\er}. 
\end{equation}
and
\begin{equation}
\label{fluct}
 \int_{\mathcal{R}^N} \frac{|\rho_{N}(0,p,q) - \mu_N(p,q)|^2}{\mu_N(p,q)} \dd p\, \dd q \leq \frac{C_0}{N^{2\er - 2}}. 
\end{equation}
\end{hypothesis}
We will construct in Section \ref{secinit} probability density functions satisfying this hypothesis for a given function $g_0$, and discuss the construction of corresponding random variables $(P_k(0,\omega),Q_k(0,\omega)) $. 
As $\mu_N$ is an invariant measure, it is easy to see that the bound \eqref{fluct} persists along the dynamics of the system. 
\begin{proposition}
\label{prop2}
Assume that  $\rho_N(0,p,q)$ satifies the bound \eqref{fluct}. Then   $\rho_N(t,p,q)$ the solution of the Fokker-Planck equation \eqref{FKP} satisfies the bound 
\begin{equation}
\label{fluct2}
\forall\, t \geq 0, \quad \int_{\mathcal{R}^N} \frac{|\rho_{N}(t,p,q) - \mu_N(p,q)|^2}{\mu_N(p,q)} \dd p \dd q \leq \frac{C_0}{N^{2\er - 2}} .
\end{equation}
\end{proposition}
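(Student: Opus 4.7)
The natural approach is to show that the chi-squared divergence
\[
\chi^2(t) := \int_{\mathcal{R}^N} \frac{(\rho_N(t,p,q) - \mu_N(p,q))^2}{\mu_N(p,q)}\,\dd p\,\dd q
\]
is non-increasing along the Fokker-Planck flow \eqref{FKP}, which immediately gives \eqref{fluct2} from \eqref{fluct}. I set $h(t,p,q) := \rho_N(t,p,q)/\mu_N(p,q) - 1$, so that $\chi^2(t) = \int h^2 \mu_N\,\dd p\,\dd q$.

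First I would derive an evolution equation for $h$. Introduce the rotation vector fields $X_k := q_k \partial_{p_k} - p_k \partial_{q_k}$, so that $L_N = \sum_{k \in \mathcal{D}_N^+} X_k^2$. Each $X_k$ is divergence-free, hence skew-symmetric on $L^2(\dd p\,\dd q)$, and annihilates $\mu_N$ because $\mu_N$ is radial in each $(p_k,q_k)$-plane. By the Leibniz rule this implies $L_N(\mu_N h) = \mu_N L_N h$. Together with $\{H_N^\varepsilon, \mu_N\} = 0$ from the preceding proposition, it also yields $\{H_N^\varepsilon, \mu_N h\} = \mu_N \{H_N^\varepsilon, h\}$. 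Inserting these identities into \eqref{FKP} gives
\[
\partial_t h = \delta L_N h - \{H_N^\varepsilon, h\}.
\]

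Next I would compute
\[
\frac{1}{2}\frac{\dd}{\dd t}\chi^2(t) = \delta \int h\,(L_N h)\,\mu_N\,\dd p\,\dd q \;-\; \int h\,\{H_N^\varepsilon, h\}\,\mu_N\,\dd p\,\dd q.
\]
The Hamiltonian term vanishes: write $h\,\{H_N^\varepsilon, h\} = \{H_N^\varepsilon, h^2/2\}$, and integrate by parts (legitimate thanks to the Gaussian tails of $\mu_N$ in finite dimension) to obtain the identity
\[
\int \{H_N^\varepsilon, F\}\,\mu_N\,\dd p\,\dd q = -\int F\,\{H_N^\varepsilon, \mu_N\}\,\dd p\,\dd q,
\]
which is zero since $\{H_N^\varepsilon, \mu_N\} = 0$. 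For the diffusion term, combining the skew-symmetry of $X_k$ on $L^2(\dd p\,\dd q)$ with $X_k \mu_N = 0$ shows that $X_k$ is also skew-symmetric on $L^2(\mu_N)$, so that $L_N = \sum_k X_k^2$ is negative symmetric on $L^2(\mu_N)$ and
\[
\int h\,(L_N h)\,\mu_N\,\dd p\,\dd q = -\sum_{k\in\mathcal{D}_N^+}\int (X_k h)^2\,\mu_N\,\dd p\,\dd q \leq 0.
\]

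Combining these two facts gives $\frac{\dd}{\dd t}\chi^2(t) \leq 0$ and \eqref{fluct2} follows. There is no genuine obstacle: the whole argument is driven by two structural facts already available, namely the mode-by-mode compatibility $X_k \mu_N = 0$ (a direct reflection of the random phase structure of the noise, which acts purely in the angles) and the preservation $\{H_N^\varepsilon, \mu_N\} = 0$ from the preceding proposition. The integration by parts is justified by the smoothness of $\rho_N(t,\cdot)$ coming from standard hypoelliptic/parabolic regularity for \eqref{FKP} in the finite (albeit large) dimension $2\,\mathrm{Card}\,\mathcal{D}_N^+$, combined with the Gaussian decay of $\mu_N$.
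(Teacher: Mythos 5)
Your proof is correct and follows essentially the same route as the paper: both show that the $\chi^2$-divergence is non-increasing by exploiting $\{H_N^\varepsilon,\mu_N\}=0$ for the Hamiltonian part and $(q_k\partial_{p_k}-p_k\partial_{q_k})\mu_N=0$ for the angular diffusion, the only difference being that you work with the relative density $h=\rho_N/\mu_N-1$ while the paper differentiates $\int \rho_N^2/\mu_N$ directly.
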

\begin{proof}
We have that 
\begin{equation}
\label{eq:mkp}
\int \frac{|\rho_N(t) - \mu_N|^2}{\mu_N} = \int \frac{\rho_N(t)^2}{\mu_N}  - \int \frac{2 \rho_N(t) \mu_N}{\mu_N} + \int \frac{\mu_N^2}{\mu_N} = \int \frac{\rho_N(t)^2}{\mu_N} - 1,
\end{equation}
as $\int \rho_N(t) = 1$ for all times, which is clear from \eqref{FKP}. 
We calculate that 
\begin{align}
\label{pou}
\frac{\dd}{\dd t} \int_{\mathcal{R}^N} \frac{\rho_N^2(t,p,q)}{\mu_N(p,q)} \dd p \, \dd q  &= 2 \int_{\mathcal{R}^N} \frac{\rho_N}{\mu_N} \partial_t \rho_N \dd p\,  \dd q \nonumber \\
&= -2 \int_{\mathcal{R}^N} \frac{\rho_N}{\mu_N} \{ H_N^\varepsilon, \rho_N\} \dd p\, \dd q + 2 \delta \int_{\mathcal{R}^N} \frac{\rho_N}{\mu_N} L_N \rho_N \dd p\, \dd q.
\end{align}
Using integration by parts, 
the first term is equal to 
$$
-\int_{\mathcal{R}_N} \frac{1}{\mu_N} \{ H_N^\varepsilon, \rho_N^2\} \dd p\, \dd q  =  \int_{\mathcal{R}_N}\rho_N^2\{ H_N^\varepsilon, \frac{1}{\mu_N}\} \dd p\, \dd q = \int_{\mathcal{R}_N}\frac{\rho_N^2}{\mu_N}\{ H_N^\varepsilon, M_N\} \dd p\, \dd q = 0, 
$$
as $M_N$ is an invariant of the Hamiltonian part of the system. 
The second term in \eqref{pou} is given by 
$$
\delta \sum_{k \in \mathcal{D}_N^+} \int_{\mathcal{R}^N} \frac{\rho_N}{\mu_N} ( q_k \partial_{p_k} -  p_k \partial_{q_k})^2 \rho_N \dd p \,\dd q = - \delta \sum_{k \in \mathcal{D}_N^+} \int_{\mathcal{R}^N} \frac{1}{\mu_N} |( q_k \partial_{p_k} -  p_k \partial_{q_k}) \rho_N|^2 \dd p \,\dd q \leq 0,
$$
as we have 
$$
( q_k \partial_{p_k} -  p_k \partial_{q_k}) \frac{1}{\mu_N} = \frac{1}{\mu_N} ( q_k \partial_{p_k} -  p_k \partial_{q_k}) M_N =0. 
$$
This shows that \eqref{fluct2} is decreasing with respect to time, and hence the result. 
\end{proof}

\section{Linearized wave kinetic equation and main results}

The wave kinetic equation associated with the system \eqref{mason2} is (see \cite{Naz1})
%\begin{equation}
%\label{zakh}
%\partial_t a(m) = 2 \int_{\substack{m = j + p \\ \omega_m = \omega_j + \omega_p}} \Psi_{mj p }^2
%\Big( a(j) a(p) - \mathrm{sign}(m_x j_x) a(m) a(j) - \mathrm{sign}(m_x p_x) a(m) a(p) \Big) \dd \Sigma( j, p) 
%\end{equation}
\begin{equation}
\label{zakh}
\partial_t r(m) = 2 \int_{\substack{m = j + p \\ \omega_m = \omega_j + \omega_p}} \Psi_{mj p }^2 r(m) r(p) r(j) 
\Big( \frac{1}{r(m)} - \frac{\mathrm{sign}(m_x j_x) }{r(p)} - \frac{\mathrm{sign}(m_x p_x)}{r(j)} \Big) \dd \Sigma( j, p) 
\end{equation}
where the integral over the resonant manifold is taken with respect to the microcanonical measure induced by the Euclidean metric, see for instance \cite{Landau}. Here $r(t,m)$ with $r:\R \times \R^2 \to \R$ is defined over the whole space, and is expected to be an approximation of $\langle |V_m(t)|^2\rangle$. Similarly to the process $V_k(t)$, we verify that the right-hand side vanishes when $m \notin \mathcal{D}$.  Hence we can consider that the support of $r$ is in $\mathcal{D}$. 

We can easily verify that $r(t,m) \equiv \gamma_m =  \frac{1}{|m_x|}$ (see \eqref{eqgamma})
 is a stationary state of the kinetic equation \eqref{zakh}. In the following, we will often write $\gamma_m$ instead $\gamma(m)$ equivalently when $m \in \mathcal{D}_N$ takes values on a discrete grid, or $m \in \mathcal{D}$ is a continuous variable.  Note from \eqref{Eref} that $\gamma_m = \E_{\mu_N} |V_m|^2$ for all $N$ and all $m \in \mathcal{D}_N^+$ is the expectation of the amplitudes of the modes when they are drawn with respect to the invariant measure $\mu_N$. Hence for an initial data $V_k(0,\omega) = P_k(0,\omega) + i Q_k(0,\omega)$ drawn with respect to the invariant measure $\mu_N$ (see \eqref{RandV}) then $\langle |V_m(t)|^2\rangle = \gamma_m$, $m\in \mathcal{D}_N^+$ is indeed the trace on the grid of a constant solution of the kinetic equation \eqref{zakh}. 
 
The linearized wave kinetic equation around the stationary state $\gamma_m$ is 
%\begin{equation}
%\label{linzakh}
%\begin{split}
%\partial_t f(m) =& 2 \int_{\substack{m = j + p \\ \omega_m = \omega_j + \omega_p}} V_{mj p }^2
%( \gamma_j f(p) + f(j) \gamma_p ) \dd j \dd p \\ 
%&- 2 \int_{\substack{m = j + p \\ \omega_m = \omega_j + \omega_p}} V_{mj p }^2
%\mathrm{sign}(m_x j_x) (\gamma_m f(j) +  f(m) \gamma_j)  \dd j \dd p \\ 
%&- 2 \int_{\substack{m = j + p \\ \omega_m = \omega_j + \omega_p}} V_{mj p }^2  \mathrm{sign}(m_x p_x) (\gamma_m  f(p) + f(m)   \gamma_p  ) \dd j \dd p 
%\end{split}
%\end{equation}
%and we can write this equation as 
\begin{equation}
\label{linzakh2}
\begin{split}
\partial_t f(m) =&  - 2 f(m) \int_{\substack{m = j + p \\ \omega_m = \omega_j + \omega_p}} \Psi_{mj p }^2 (\mathrm{sign}(m_x j_x) \gamma_j + \mathrm{sign}(m_x p_x)\gamma_p) \dd \Sigma( j, p)  \\
& + 2 \int_{\substack{m = j + p \\ \omega_m = \omega_j + \omega_p}}  \Psi_{mj p }^2 ( \gamma_j - \mathrm{sign}(m_x p_x) \gamma_m) f(p) \dd \Sigma( j, p)  \\
&+ 2 \int_{\substack{m = j + p \\ \omega_m = \omega_j + \omega_p}}  \Psi_{mj p }^2 ( \gamma_p - \mathrm{sign}(m_x j_x) \gamma_m) f(j) \dd \Sigma( j, p) 
\end{split}
\end{equation}
and we write it
\begin{equation}
\label{linzakh3}
\begin{split}
\partial_t f(m) =&   \int_{\substack{m = j + p \\ \omega_m = \omega_j + \omega_p}} (  L(m,j,p)f(m)   +  S(m,j,p) f(p) +  S(m,p,j) f(j)) \dd \Sigma( j, p)
\end{split}
\end{equation}
with 
\begin{equation}
\label{eq:LS}
\begin{array}{l}
L(m,j,p) = - 2 \Psi_{mj p }^2 (\mathrm{sign}(m_x j_x)\gamma_j + \mathrm{sign}(m_x p_x)\gamma_p) \quad\mbox{and} \\[2ex]
S(m,j,p) = 2 \Psi_{mj p }^2 ( \gamma_j - \mathrm{sign}(m_x p_x) \gamma_m). 
\end{array}
\end{equation}
As before, $f(t,m)$ can be considered as a function with support in $\mathcal{D}$ in the variable $m$. In the next section, we will show that this equation is globally well posed for initial data in $C^{1}(\mathcal{D})$ and moreover, that it is well approximated by the solution of the {\em quasi-resonant} equation defined for $\lambda >0$ by  
\begin{multline}
\label{linzakhreg}
\partial_t f_\lambda(m) =   \frac{1}{\pi}\int \frac{\lambda }{( \omega_m - \omega_{m-p} - \omega_p)^2 + \lambda^2}    L(m,m-p,p)f_\lambda(m) \dd p \\
  + \frac{1}{\pi}\int \frac{\lambda }{( \omega_m - \omega_{m-p} - \omega_p)^2 + \lambda^2}  ( S(m,m-p,p) f_\lambda(p) +  S(m,p,m-p) f_\lambda(m-p))   \dd p. 
\end{multline}

%\begin{theorem}
%\label{th:decexp}
%We have the exponential decay in $L^\infty$. 
%\end{theorem}
Before stating our main results, let us mention that we can rewrite the kinetic equation as an equation on $\mathcal{D}^+$ only, by assuming $r(m) = r(-m)$, and we find for $m \in \mathcal{D}^+$, 
\begin{multline}
\partial_t r(m) = 2  \int_{\substack{m = j + p \\ \omega_m = \omega_j + \omega_p}} (\Psi_{mj p }^+)^2 \big( r(j) r(p) - r(m) r(j) - r(m) r(p) \big)  \dd \Sigma(j, p)  
\\ 
+ 4 \int_{\substack{m = - j + p \\ \omega_m = - \omega_j + \omega_p}} (\Psi_{m j p }^+)^2  \big( r(j)r(p) + r(m) r(p) - r(m) r(j) \big)    \dd \Sigma(j, p), 
\end{multline}
corresponding to the formulation \eqref{mason3} (We can obviously do the same manipulation for 
the linearized kinetic equation). 
%\begin{multline}
%\label{reseq}
%\partial_t f(m) = 4 \int_{\substack{m = j + p \\ \omega_m = \omega_j + \omega_p}} (V_{mj p }^+)^2 \big( ( \gamma_j - \gamma_m)  f(p) - \gamma_j f(m)  \big)  \dd j \dd p 
%\\ 
%+ 4 \int_{\substack{m = - j + p \\ \omega_m = - \omega_j + \omega_p}} (V_{m j p }^+)^2  \big( \gamma_j f(p) + \gamma_p f(j) + \gamma_m f(p)+ \gamma_p f(m)  -  \gamma_m f(j)-  \gamma_j f(m)  \big)     \dd j \dd p . 
%\end{multline}
%as well as similar versions for $f_\lambda(m)$. 

To state our main result, we need to introduce a small parameter $h >>  1/N$ and the {\em coarse} grid of mesh $h$
\begin{equation}
\label{coarsegrid}
\mathcal{G}_h = \{ (h \alpha_x,h \alpha_y) \in \mathcal{D}\, | \, (\alpha_x,\alpha_y) \in \Z^2\}. 
\end{equation}
% Note that when $h = 1/N$, we have $\mathcal{G}_h = \mathcal{D}_N$. 
%For $K$ and $\ell$ in $\R^2$ we set $|K - \ell| = \max (| K_x - \ell_x|, |K_y - \ell_y|)$. 
For a given $K \in \mathcal{G}_h$, we introduce the cell 
\begin{equation}
\label{cells}
\mathcal{C}_{K}^{N,h} = \{ m \in \mathcal{D}_N\, \,|\,   K_x \leq   m_x < K_x + h, \,\,  K_y \leq   m_y < K_y + h\}, 
\end{equation}
so that 
$$
\mathrm{Card}\,  \mathcal{C}_{K}^{N,h}  = h^2 N^2. 
$$
Our main result is the following: 
\begin{theorem}
\label{th1}
Let $\eta > 0$, $\er$ with $1 \leq \er \leq 2$ be given, $T> 0$ be a fixed constant and $g_0 \in C^{1}(\mathcal{D})$ be a given function. Let $f(t,m) \in C^1(\mathcal{D})$ be the solution of the linearized kinetic equation \eqref{linzakh3} on the interval $[0,T]$ with initial data $f(0,m) = g_0(m)$, $m \in \mathcal{D}$. 
There exist constants $\varepsilon_0$, $\delta_0$, $N_0$, and $C$ such that if 
$$
\delta \leq \delta_0, \quad \varepsilon \leq \varepsilon_0,\quad N \geq N_0, \quad\mbox{and}\quad h \leq \delta^2 
$$
the following holds: First, if $f_{3\delta}(t,m)$ denote the solution of the quasi-resonant equation \eqref{linzakhreg} with $\lambda = 3 \delta$ and initial condition $f_{3\delta}(0,m) = g_0(m)$, then we have 
\begin{equation}
\label{pmlkp}
\sup_{t \in [0,T]} \sup_{m \in \mathcal{D}} | f(t,m) - f_{3\delta}(t,m)| \leq C \sqrt{\delta}. 
\end{equation}
Moreover, 
assume that $(P(0,\omega), Q(0,\omega)) = (P_k(0,\omega), Q_k(0,\omega))_{k \in \mathcal{D}_N^+}$ is a random variable of density $\rho_{N}(0,p,q)$ satisfying Hypothesis \ref{hypinit} with the function $g_0$ and perturbation scale $1/N^\er$, and let $V_k(t,\omega)$ be the solution of the system \eqref{mason2} with initial data $V_k(0,\omega) = P_k(0,\omega) + i Q_k(0,\omega)$, $k \in \mathcal{D}_N^+$ and $V_{k}(0,\omega) = \widebar{V}_{-k}(0,\omega)$ for $k \in \mathcal{D}_N^{-}$. Let us define the renormalized fluctuation observables and their averages at the scale $h$: 
\begin{equation}
\label{defluc1}
\forall\, k \in \mathcal{D}_N, \quad F_k^N (t) = N^\er \Big(\E\, |V_k(t)|^2  - \gamma_k \Big), 
\end{equation}
and the coarse-grained quantities
\begin{equation}
\label{defluc2}
\forall\, K \in \mathcal{G}_h,\quad F_K^{N,h}(t) :=  \frac{1}{h^2 N^2}\sum_{k \in \mathcal{C}^{N,h}_K } F_k^N (t), \quad\mbox{and}\quad
f_{3\delta}^{N,h}(\pi\varepsilon^2 t,K) = \frac{1}{h^2 N^2}\sum_{k \in \mathcal{C}^{N,h}_K } f_{3\delta}(\pi\varepsilon^2 t, k)
\end{equation}
where for $k \notin \mathcal{D}_N$ we set $F_k^N(t)   = 0$. 
Then we have the estimate
\begin{equation}
\label{estfund}
\forall\, t \leq \frac{T}{\pi \varepsilon^2}, \quad \sup_{K\in \mathcal{G}_h} \big|F_K^{N,h}(t) - f_{3\delta}^{N,h}(\pi\varepsilon^2 t, K) \big| \leq C \Big(  \frac{\varepsilon }{h \delta^2 }+ \frac{ 1}{ h \delta  N} +  \frac{\delta}{N^{2 - \alpha}}\Big). 
\end{equation}

%for $\varepsilon \leq \delta^2 $ and $\delta  N \geq 1$. 
\end{theorem}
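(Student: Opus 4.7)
\textbf{Reduction to the quasi-resonant equation \eqref{pmlkp}.} Both \eqref{linzakh3} and \eqref{linzakhreg} are linear evolution equations with bounded smooth kernels (recall $\Psi^+_{mjp}$ is smooth with compact support and $\gamma_k = 1/|k_x|$ is smooth on $\mathcal{D}$), so standard Cauchy--Lipschitz arguments in $C^1(\mathcal{D})$ give global well-posedness on $[0,T]$. For the $\sqrt{\delta}$ estimate I would prove the basic quantitative approximation
$$\Big| \frac{1}{\pi}\int \frac{\lambda \, \Phi(p)\,\dd p}{(\omega_m - \omega_{m-p} - \omega_p)^2 + \lambda^2} - \int_{\omega_m = \omega_{m-p} + \omega_p} \Phi(p) \dd \Sigma(p) \Big| \leq C \sqrt{\lambda},$$
for $C^1$ test functions $\Phi$, using a change of coordinates that straightens the (two, non-intersecting, smooth) resonant parabolas into hyperplanes, integration of the Lorentzian in the normal direction, and the classical bound quantifying the convergence of $\frac{\lambda}{\omega^2 + \lambda^2}$ to $\pi \delta_0(\omega)$ against $C^1$ functions. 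A Gronwall argument on $f - f_{3\delta}$ with $\lambda = 3\delta$ then yields \eqref{pmlkp}.

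\textbf{Moment hierarchy with phase-noise damping.} I would write down the It\^o equations implied by \eqref{mason3} for $M_k(t) = \E|V_k(t)|^2$ and the triple moment $T_{k,n,\ell}(t) = \E\, V_n V_\ell \widebar V_k$. Since the phase noise $V_n \circ \dd W_n$ preserves $|V_n|^2$, the $M_k$ equation sees only the nonlinearity,
$$\dot M_k = \frac{2 \varepsilon}{N} \mathrm{Im} \sum_{k = n + \ell} \Psi^+_{kn\ell}\, T_{k,n,\ell} + (\text{analogous symmetric sums}),$$
while the three independent phase Brownians contribute a Stratonovich--It\^o damping $-3\delta$ to $T$:
$$\dot T_{k,n,\ell} = (i\Omega_{kn\ell} - 3\delta) T_{k,n,\ell} + \frac{\varepsilon}{N}\, (\text{linear in quartic moments of } V),$$
with $\Omega_{kn\ell} = \omega_n + \omega_\ell - \omega_k$. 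This is the key gain from the random phase forcing: the noise damps off-resonant triple moments on the time scale $\delta^{-1}$, which is what eventually produces the Lorentzian kernel of \eqref{linzakhreg}.

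\textbf{Closure via the chi-squared bound and Duhamel integration.} I would replace the quartic moments by their Gaussian/Wick values $\E_{\mu_N} V_a V_b \widebar V_c \widebar V_d = \gamma_a \gamma_b (\delta^a_c\delta^b_d + \delta^a_d\delta^b_c)$ plus the leading non-Gaussian perturbation encoded by $g_0/N^\er$, controlling the remainder via Cauchy--Schwarz against \eqref{fluct2}: for any polynomial observable $G$,
$$|\E_{\rho_N(t)} G - \E_{\mu_N} G| \leq \Big(\int \frac{|\rho_N(t) - \mu_N|^2}{\mu_N}\Big)^{1/2} \|G\|_{L^2(\mu_N)} \leq \frac{C_0^{1/2}}{N^{\er - 1}} \|G\|_{L^2(\mu_N)},$$
where $\|G\|_{L^2(\mu_N)}$ is bounded uniformly in $N$ for the quartic monomials at hand. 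Integrating the $T$-equation by Duhamel against the kernel $e^{(i\Omega - 3\delta)(t-s)}$ and substituting into the $M_k$-equation produces, after the transient $e^{-3\delta t}$ is absorbed on the kinetic time scale, an expression proportional to
$$\frac{\varepsilon^2}{N^2}\sum_{k = n + \ell} \frac{3\delta - i\Omega_{kn\ell}}{\Omega_{kn\ell}^2 + 9\delta^2}\cdot(\text{Wick-closed quadratic in } M),$$
whose imaginary part cancels by the $n\leftrightarrow \ell$ symmetry of $\Psi^+$, leaving exactly the Lorentzian $3\delta/(\Omega^2 + 9\delta^2)$ of \eqref{linzakhreg} with $\lambda = 3\delta$. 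Rescaling time by $\pi\varepsilon^2$ and averaging over a cell $\mathcal{C}_K^{N,h}$ converts the discrete convolution into a Riemann sum for the continuous integral in \eqref{linzakhreg}, and a Gronwall argument on the coarse-grained difference $F_K^{N,h} - f_{3\delta}^{N,h}$ closes the estimate \eqref{estfund}.

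\textbf{Error accounting and main obstacle.} The three terms in \eqref{estfund} correspond to: (i) the higher-order remainder in the time averaging, where the Lorentzian has $L^\infty$ norm $\delta^{-1}$ and each additional Duhamel iteration costs a factor $\varepsilon/\delta$, giving $\varepsilon/(h\delta^2)$ after the coarse-grained average; (ii) the Riemann-sum error for the Lorentzian, whose gradient in $k$ scales like $\delta^{-2}$, producing $1/(h\delta N)$; and (iii) the closure error $\delta/N^{2-\er}$ from the chi-squared bound, multiplied by the effective dissipation scale $\delta$. The hard part is the closure: one cannot expand $\rho_N(t)$ in inverse powers of $N$ with uniform bounds, so the entire non-Gaussian correction in the quartic moments must be controlled through the single global estimate \eqref{fluct2} (whose propagation in time is Proposition \ref{prop2}), and the damping $e^{-3\delta(t-s)}$ supplied by the phase noise must be strong enough to kill the residual oscillatory phases without disturbing the $1/N^\er$ fluctuation scaling.
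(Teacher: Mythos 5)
Your proposal follows essentially the same route as the paper: the $\sqrt{\lambda}$ concentration of the Lorentzian onto the resonant parabolas for \eqref{pmlkp}, then the moment hierarchy with the $3\delta$ phase-noise damping on triple correlations, Duhamel/integration by parts producing the Lorentzian kernel, closure of the non-$|V|^2$ terms via Cauchy--Schwarz against the $\chi^2$ bound \eqref{fluct2}, linearization around $\gamma_k$, Riemann-sum comparison with the quasi-resonant equation, and Gronwall on the coarse-grained difference. The only discrepancies are in the bookkeeping: in the paper the $\delta/N^{2-\er}$ term comes from the degenerate triples $k=\ell=m/2$ (where the damping is $5\delta$, handled by the non-resonance \eqref{orage1}) and the $1/(h\delta N)$ term from the quadratic fluctuation $\langle(|V_j|^2-\gamma_j)(|V_k|^2-\gamma_k)\rangle$ rather than from the Riemann sum, but these do not affect the validity of the strategy.
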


This theorem shows  that when $\delta > 0$ is fixed, the limit of the renormalized fluctuations when $(N,\varepsilon) \to (\infty,0)$ is given by the quasi-resonant kinetic equation \eqref{linzakhreg} in a weak sense (at least in the case $\alpha < 2$).

By combining \eqref{pmlkp} and \eqref{estfund}, and as $f(t,m)$ is smooth (see Theorem \ref{th3} below), we obtain the following result: 
\begin{corollary}
\label{cor1}
Under the hypothesis of the previous theorem,  we have  
$$
\forall\, t \leq \frac{T}{\pi \varepsilon^2}, \quad \sup_{K\in \mathcal{G}_h} \big|F_K^{N,h}(t) - f(\pi\varepsilon^2 t, K) \big| \leq C \Big(  \frac{\varepsilon }{h \delta^2 }+ \frac{ 1}{ h \delta  N} +  \sqrt{\delta} \Big). 
$$
\end{corollary}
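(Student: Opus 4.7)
The proof will follow by a three-term triangle inequality, splitting the quantity $F_K^{N,h}(t) - f(\pi\varepsilon^2 t, K)$ into contributions each of which is controlled by one of the estimates already at our disposal. Concretely, introducing the coarse-grained version of the exact kinetic solution,
\[
f^{N,h}(\pi\varepsilon^2 t, K) := \frac{1}{h^2 N^2} \sum_{k \in \mathcal{C}^{N,h}_K} f(\pi\varepsilon^2 t, k),
\]
I write
\[
\bigl| F_K^{N,h}(t) - f(\pi\varepsilon^2 t, K) \bigr|
\leq \bigl| F_K^{N,h}(t) - f_{3\delta}^{N,h}(\pi\varepsilon^2 t, K) \bigr|
+ \bigl| f_{3\delta}^{N,h} - f^{N,h} \bigr|(\pi\varepsilon^2 t, K)
+ \bigl| f^{N,h}(\pi\varepsilon^2 t, K) - f(\pi\varepsilon^2 t, K) \bigr|.
\]

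The first term is handled directly by estimate \eqref{estfund} of Theorem \ref{th1}, which gives the $\varepsilon/(h\delta^2) + 1/(h\delta N)$ contribution uniformly in $K \in \mathcal{G}_h$. For the second term, I observe that $f_{3\delta}^{N,h} - f^{N,h}$ is the cell-average of the pointwise difference $f_{3\delta}(\pi\varepsilon^2 t, k) - f(\pi\varepsilon^2 t, k)$, so by the triangle inequality applied inside the sum and the uniform bound \eqref{pmlkp}, this contribution is at most $C\sqrt{\delta}$, independently of $K$, $N$, and $h$.

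The third term is the discretization error between the cell-average of $f$ and its value at the corner $K$. Invoking Theorem \ref{th3} (which supplies $C^1$ regularity of $f$ on the time interval $[0,T]$, uniformly in $t$), I can bound
\[
\bigl| f^{N,h}(\pi\varepsilon^2 t, K) - f(\pi\varepsilon^2 t, K) \bigr|
\leq \frac{1}{h^2 N^2} \sum_{k \in \mathcal{C}^{N,h}_K} \bigl| f(\pi\varepsilon^2 t, k) - f(\pi\varepsilon^2 t, K) \bigr|
\leq C h \, \|\nabla_m f\|_{L^\infty([0,T]\times \mathcal{D})},
\]
since every $k \in \mathcal{C}^{N,h}_K$ lies within Euclidean distance $\sqrt{2}\,h$ of $K$. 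Under the hypothesis $h \leq \delta^2 \leq \sqrt{\delta}$ (valid for $\delta \leq \delta_0$ small), this error is absorbed into the $\sqrt{\delta}$ term.

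The main (only real) obstacle is ensuring uniform $C^1$ control on $f(\pi\varepsilon^2 t, \cdot)$ over the full time horizon $t \leq T/(\pi\varepsilon^2)$, i.e.\ in the rescaled time $\tau = \pi\varepsilon^2 t \in [0,T]$; but this is precisely what Theorem \ref{th3} provides. Once these three bounds are summed, the advertised estimate follows with a new constant $C$ independent of $\varepsilon, \delta, N$ and $h$, concluding the proof.
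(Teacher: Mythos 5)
Your proof is correct and follows essentially the same route as the paper: the paper's own argument (given in a remark after the proof of Theorem \ref{th1}) also decomposes $f_{3\delta}^{N,h}(\pi\varepsilon^2 t,K) - f(\pi\varepsilon^2 t,K)$ into the cell-averaged difference $f_{3\delta}-f$ controlled by \eqref{pmlkp}/\eqref{th3conv} and a discretization error of size $\mathcal{O}(h)\leq C\delta^2$ using the uniform $W^{1,\infty}$ bound on $f$ from Theorem \ref{th3}, then combines with \eqref{estfund}. You correctly use the regularity of $f$ rather than of $f_{3\delta}$ (whose gradient is only $\mathcal{O}(\delta^{-2})$) for the discretization step, which is the one subtle point, and your argument is complete.
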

Essentially, these results show that for $t \in [0, T]$, in a weak sense, 
\begin{equation}
\label{eq:korber2}
\lim_{\delta \to 0} \lim_{\substack{\varepsilon \to 0 \\ N \to \infty}} \Big( N^\er \E\, |V_k(\frac{t}{\pi \varepsilon^2})|^2 -  N^\er\gamma_k \Big) =  f(t, k),  
\end{equation}
where weak convergence has to be understood as strong  convergence of local averages in $k$ over boxes of size $h$. So we have described the evolution of the fluctuations over long times and identified the first correction term in the asymptotic expansion as the solution a linearized wave kinetic equation.  We see that in this result, $\delta$ is the last parameter tending to zero, and the two limits $\varepsilon \to 0$ and $N \to \infty$ commute. 
In the previous Theorem, the role of the coarse-graining parameter $h$ is crucial to obtain the linearization of the equation. After taking the continuous limit $N \to \infty$ first, $h$ represents the size of the space averaging window to obtain the convergence in \eqref{estfund}, and is only subject to the restriction $h \leq \delta^2$ (in fact $h \leq C \delta^2$ for some constant $C$ is enough). 
%The last error term in \eqref{estfund} is due to the concentration on the resonant manifold. It can be removed if $f$ is replaced by a function $f_\delta$ solution of a quasi-resonant linear kinetic equation (see \eqref{linzakhreg}). We detail this fact in Remark \ref{rkpiano}. 

The proof of this result uses strongly the bound \eqref{fluct2} which encodes some random phase effects that are propagated for all times. These effects are well expressed after averaging in space and are crucial for the linearization of the limit wave kinetic equation. The role of the small stochastic forcing is more to break the Hamiltonian structure of the equation and make the resonant kinetic equation appear as a weak effect of a hypoellipticity property of the system. 

The following result shows that in the absence of noise, the situation is completely different, and there is no kinetic description to expect at least in the  regime $\varepsilon << \frac{1}{N}$ and under some generic assumption on the coefficient $\eta$ in \eqref{bruch} (or equivalently for a generic discrete frequency grid). 
\begin{theorem}
\label{th2}
Let $T> 0$, $\er \geq 1$ be  fixed constants and $f_0 \in C^{1}(\mathcal{D})$ be a given function. Assume that $\delta = 0$ so that \eqref{mason2} is deterministic. 
For almost all parameters $\eta > 0$, there exist $\beta >0$ and constants $\varepsilon_0$, $N_0$ and $C$ such that for $h > 0$, if 
 $\varepsilon \leq \varepsilon_0$ and $N \geq N_0$, under the same hypothesis as in the previous Theorem, 
 we have the estimate 
\begin{equation}
\label{estfund2}
\forall\, t \leq \frac{T}{\varepsilon^2}\quad 
\sup_{K\in \mathcal{G}_h} \big|F_K^{N,h}(t) - g_0(K) \big| \leq C \Big(  \frac{\varepsilon N^\beta}{h }+ \frac{ 1}{ h   N} \Big). 
\end{equation}
where $g_0$ is the initial condition of Hypothesis \ref{hypinit}. %Moreover, 
% for all $\varepsilon > cN^{-\frac{3}{8}} >> N^{-1}$ we have the estimate 
%\begin{equation}
%\label{estfund1}
%\forall\, t \leq \frac{T}{\varepsilon^2}\quad \sup_{k\in \mathcal{D}_N^+} \big|F_k^N(t) - f_0(k) \big| \leq C 
%\Big(  \varepsilon N^{\frac{1}{4}}(\log N) |\log\varepsilon|
%\Big)
%\end{equation}
\end{theorem}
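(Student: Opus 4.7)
The strategy is a deterministic Birkhoff-type normal form, leveraging the absence of exact three-wave resonances on the lattice $\mathcal{D}_N$ for generic $\eta$.

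\textbf{Step 1: Diophantine non-resonance.} Given three frequencies $n,k,\ell\in\mathcal{D}_N$ with $n=k+\ell$, the resonance defect
$$\Delta := \omega_n-\omega_k-\omega_\ell = 3 n_x k_x \ell_x + \eta\bigl(n_x^{-1}n_y^2 - k_x^{-1}k_y^2 - \ell_x^{-1}\ell_y^2\bigr) =: a + \eta b$$
is affine in $\eta$. The explicit form of $\omega$, together with the fact that $x$-components are bounded away from $0$ on $\mathcal{D}$, rules out $a=b=0$ except for trivial triples that do not enter the nonlinearity. For each $N$ there are $\mathcal{O}(N^4)$ such triples, so the set of $\eta$ for which $|\Delta|<\gamma N^{-\beta}$ for some triple has measure $\mathcal{O}(N^{4-\beta})$. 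Choosing $\beta$ large enough and applying a Borel--Cantelli argument yields a full-measure set of $\eta$ for which
$$|\omega_n-\omega_k-\omega_\ell|\geq \gamma(\eta)\,N^{-\beta}, \qquad n=k+\ell,\ n,k,\ell\in\mathcal{D}_N,$$
and similarly for all other sign combinations $\pm\omega_n\pm\omega_k\pm\omega_\ell$ arising from \eqref{HOK}.

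\textbf{Step 2: Birkhoff normal form.} Fix such an $\eta$. I construct a canonical, near-identity change of variables $V=\Phi(\hat V)=\hat V+\varepsilon\chi(\hat V,\overline{\hat V})$, where $\chi_n$ is a quadratic polynomial in $(\hat V,\overline{\hat V})$ obtained by dividing each coefficient of the cubic Hamiltonian $K_N$ by the corresponding resonance defect; by Step 1, $\|\chi_n\|\leq C N^\beta\|\hat V\|_\infty^2$. In the new variables the Hamiltonian becomes
$$H'_N = \Omega_N + \varepsilon^2 Z_4(\hat V,\overline{\hat V}) + \varepsilon^3 R,$$
where $Z_4$ collects only the resonant quartic terms. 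By momentum and mass-invariance counting, the only resonant quartic monomials are of the diagonal type $\sum_{i,j}\alpha_{ij}|\hat V_i|^2|\hat V_j|^2$, and $R$ has coefficients growing at most polynomially in $N$.

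\textbf{Step 3: Action preservation and return to the original variables.} Because $Z_4$ depends only on actions, $\{|\hat V_n|^2,Z_4\}=0$, so $\partial_t|\hat V_n|^2=\varepsilon^3\{|\hat V_n|^2,R\}$ and
$$\sup_{t\leq T/\varepsilon^2}\bigl||\hat V_n(t)|^2-|\hat V_n(0)|^2\bigr|\leq C T\,\varepsilon\, N^{\beta'}$$
for some $\beta'>0$. Unwinding the change of variables,
$$|V_n|^2 = |\hat V_n|^2 + \varepsilon\,\mathcal{T}_n(\hat V,\overline{\hat V}) + O(\varepsilon^2 N^{2\beta}),$$
where $\mathcal{T}_n$ is cubic in $(\hat V,\overline{\hat V})$ with \emph{no} diagonal (action-type) monomial (no resonant three-wave combination exists). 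By the random-phase property of $\mu_N$ (equation \eqref{orthog}), $\E_{\mu_N}\mathcal{T}_n=0$; the $\chi^2$-stability bound \eqref{fluct2}, which remains valid for $\delta=0$ because $\mu_N$ is invariant under the pure Hamiltonian flow, together with Cauchy--Schwarz controls $|\E_{\rho_N}\mathcal{T}_n|$ by $C N^{\beta}/N^{\alpha-1}$. Multiplying by $N^\alpha$ and averaging incoherently over the cell $\mathcal{C}^{N,h}_K$ produces the extra $1/h$ factor, yielding the announced $\varepsilon N^\beta/h$ bound. The residual $1/(hN)$ error accounts for the discretization of the smooth profile $g_0$ on the fine grid inside each cell.

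\textbf{Main obstacle.} The crux is the Diophantine non-resonance estimate of Step 1: one must exploit the precise affine dependence $\Delta=a+\eta b$ and the algebraic structure of $\omega_k$ to rule out the degenerate triples and to quantify the small divisors uniformly in $N$. A secondary difficulty is the careful bookkeeping of polynomial $N$-growth through the Birkhoff substitution, so that the $\varepsilon^2\to\varepsilon^3$ gain in the normal form beats the polynomial losses from the divisors and cooperates with the $L^2$-stability bound \eqref{fluct2} when passing from the pointwise estimate on $\mathcal{T}_n$ to the coarse-grained statement.
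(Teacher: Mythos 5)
Your overall strategy (generic Diophantine control of the small divisors, then a second-order averaging/normal form argument showing the actions are essentially preserved on the time scale $T/\varepsilon^2$) is the right one, and your Step~1 measure-estimate alternative to the paper's Khinchin-type argument is plausible. But there is a genuine gap at the heart of Step~2: the claim that ``by momentum and mass-invariance counting, the only resonant quartic monomials are of the diagonal type $\sum\alpha_{ij}|\hat V_i|^2|\hat V_j|^2$'' is false for the KP dispersion relation. The paper computes the four-wave resonant set explicitly: for $m=k-j+\ell$ and $\omega_m=\omega_k-\omega_j+\omega_\ell$, besides the trivial pairings one finds a genuinely non-diagonal family with $m_x=k_x$, $j_x=\ell_x$ and $\ell_y,j_y$ determined affinely by a free parameter $k_y$, giving $\mathcal{O}(N^2)$ non-trivial resonant quadruples per mode $m$. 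Consequently $Z_4$ is \emph{not} a function of the actions alone, $\{|\hat V_n|^2,Z_4\}\neq 0$, and your Step~3 collapses as stated. Note also that your Step~1 only controls three-wave combinations $\pm\omega_n\pm\omega_k\pm\omega_\ell$; the second normalization step requires four-wave divisors $\omega_m+\omega_j-\omega_k-\omega_\ell$, which genuinely vanish on this non-trivial set, so no Diophantine bound can remove these terms.

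The missing ingredients are exactly what the paper supplies: (i) a classification showing the non-trivial resonant modulus $\mathcal{R}_m^N$ has cardinality only $\mathcal{O}(N^2)$ (rather than the generic $\mathcal{O}(N^4)$), together with a lower bound $cN^{-\nu}$ on the non-resonant four-wave divisors; (ii) a cancellation-by-symmetry argument showing that one family of resonant contributions is purely real and hence drops out of $\mathrm{Re}\,i(\cdots)$; and (iii) for the remaining non-diagonal resonant terms, a Cauchy--Schwarz estimate against the invariant measure using the $\chi^2$-stability bound \eqref{fluct2} and the reduced cardinality, which is precisely what produces the $t\varepsilon^2/(hN)$ contribution and hence the $1/(hN)$ term in \eqref{estfund2}. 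Your attribution of that term to the discretization of $g_0$ is therefore also incorrect; it is the trace of the unavoidable non-diagonal four-wave resonances. Finally, the paper never changes variables on phase space: it works at the level of the moment hierarchy via iterated integration by parts (Duhamel), which makes the random-phase cancellations of \eqref{orthog} directly available; if you insist on a genuine Birkhoff transformation you would still have to perform this moment analysis on the non-diagonal part of $Z_4$, so the coordinate change buys you nothing here.
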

Note that this theorem can be reformulated\footnote{strictly speaking, to obtain \eqref{estbirkhoff}, we should keep track on the dependence in $\delta \to 0$  in all the estimates. It would only yield to more technical though straightforward estimates compared to assuming directly $\delta = 0$.} as 
\begin{equation}
\label{estbirkhoff}
\lim_{N \to \infty} \lim_{\varepsilon \to 0} \, \lim_{\delta \to 0}  \Big( N^\er \E\, |V_k(\frac{t}{ \varepsilon^2})|^2 -  N^\er\gamma_k \Big) =  g_0(k),  
\end{equation}
for all $k \in \mathcal{D}_N$ and $t \in [0,T]$, 
showing the absence of non trivial kinetic description in this regime and a weak {\em preservation of the actions} in the perturbation theory terminology. This result can be interpreted as a two-steps Birkhoff normal form result  (see for instance \cite{BG}) showing preservation of the actions over long times, for some random initial data. 

%Note moreover the convergence in \eqref{estfund2} is stronger than in \eqref{estfund} and implies the convergence of the coarse-grained quantities $F_K^{N,h}(t)$ of the previous corollary towards $g_0(K)$. 

%The hypothesis on $\mathcal{D}$ is purely technical and allows to avoid a complicated four wave resonance analysis which would be out of the scope of this paper. At least, this result gives an example of qualitative difference between the stochastic and deterministic cases. 
%
%Note that when $\delta = 0$ and up to the frequency cut-off, the system \eqref{bruch}  is known to be integrable. This {\em a priori} excludes a wave turbulence description of the fluctuations over long time, although a precise statement would require a very delicate analysis. 

\section{Examples of initial distributions\label{secinit}}

We now construct examples of density functions $\rho_{N}$ satisfying Hypothesis \ref{hypinit}. The simplest way to proceed is to consider small local modifications of the variance. 

\begin{proposition}
Let $g_0: \mathcal{D}^+ \to \R$ be a given function and $\alpha \geq 1$. There exists $N_0$ such that for $N \geq N_0$, the densities
$$
\rho_N(0,p,q) = \prod_{k \in \mathcal{D}_N^+} \varphi( \gamma_k + g_0(k) N^{-\alpha},p_k,q_k),
$$
where 
$\varphi(\gamma,x,y)$ is the Gaussian function \eqref{eqnu}, satisfies Hypothesis \ref{hypinit} for the function $g_0$. 
\end{proposition}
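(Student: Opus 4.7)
\begin{Proof}[Proof plan]
The plan is to verify the two conditions of Hypothesis~\ref{hypinit} separately, exploiting the tensor-product structure of both $\rho_N(0,\cdot)$ and $\mu_N$.

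For the first condition \eqref{hypg}: under $\rho_N(0,p,q)$, the random variables $(P_k,Q_k)$ for different $k\in\mathcal{D}_N^+$ are independent, and for each $k$ the pair $(P_k,Q_k)$ is distributed according to $\varphi(\gamma_k+g_0(k)N^{-\alpha},\cdot,\cdot)$. Since $\int_{\R^2}(p^2+q^2)\varphi(\gamma,p,q)\dd p\dd q=\gamma$, we immediately obtain $\E_{\rho_N(0)}(P_k^2+Q_k^2)=\gamma_k+g_0(k)N^{-\alpha}$, which is exactly \eqref{hypg}. This also forces the choice $N\geq N_0$ large enough that $\gamma_k+g_0(k)N^{-\alpha}>0$ uniformly in $k$, so that the Gaussians are well-defined.

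For the second condition \eqref{fluct}, I would proceed as in \eqref{eq:mkp}: since $\int\rho_N(0)=\int\mu_N=1$, it suffices to bound $\int \rho_N^2/\mu_N-1$. Because both densities factor over $k\in\mathcal{D}_N^+$, the ratio factors too and
\[
\int_{\mathcal{R}^N}\frac{\rho_N(0,p,q)^2}{\mu_N(p,q)}\dd p\dd q = \prod_{k\in\mathcal{D}_N^+}\int_{\R^2}\frac{\varphi(\tilde\gamma_k,p,q)^2}{\varphi(\gamma_k,p,q)}\dd p\dd q,
\]
where $\tilde\gamma_k:=\gamma_k+g_0(k)N^{-\alpha}$. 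A direct Gaussian integration (choosing $N_0$ so that $2\gamma_k-\tilde\gamma_k>0$, which is ensured since $g_0$ is bounded and $\gamma_k=1/k_x$ is bounded away from $0$ uniformly on the compact set $\mathcal{D}^+$ once $N$ is large) gives each factor equal to $\gamma_k^2/(\gamma_k^2-(g_0(k)N^{-\alpha})^2)=1/(1-\varepsilon_k^2)$ with $\varepsilon_k:=g_0(k)/(\gamma_k N^\alpha)$.

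It then remains to take the logarithm and Taylor-expand: using $-\log(1-x)=x+O(x^2)$ for small $x$,
\[
\log\int\frac{\rho_N(0)^2}{\mu_N}=\sum_{k\in\mathcal{D}_N^+}\bigl(\varepsilon_k^2+O(\varepsilon_k^4)\bigr)\leq \frac{C}{N^{2\alpha}}\mathrm{Card}(\mathcal{D}_N^+)\leq \frac{C'}{N^{2\alpha-2}},
\]
since $\mathrm{Card}(\mathcal{D}_N^+)=\mathcal{O}(N^2)$ and $g_0(k)^2/\gamma_k^2$ is bounded on $\mathcal{D}^+$. For $\alpha\geq 1$ the right-hand side is bounded, and exponentiating and subtracting $1$ yields $\int(\rho_N-\mu_N)^2/\mu_N\leq C_0/N^{2\alpha-2}$, which is \eqref{fluct}. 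No step is really a serious obstacle; the only subtlety is keeping the expansion of $\log(1-\varepsilon_k^2)$ uniform in $k$, but this is ensured by the smoothness of $g_0$ and the compactness of $\mathcal{D}^+$.
\end{Proof}
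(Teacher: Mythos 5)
Your proposal is correct and follows essentially the same route as the paper: factor the ratio $\rho_N^2/\mu_N$ over $k$, compute each Gaussian factor explicitly as $\gamma_k^2/(\gamma_k^2-g_0(k)^2N^{-2\alpha})$, take logarithms, bound the sum by $\mathrm{Card}(\mathcal{D}_N^+)\cdot\mathcal{O}(N^{-2\alpha})=\mathcal{O}(N^{2-2\alpha})$, and exponentiate using $\alpha\geq 1$ to conclude via \eqref{eq:mkp}. The only difference is cosmetic (you spell out the verification of \eqref{hypg} and the uniformity of the Taylor expansion, which the paper leaves implicit).
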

\begin{proof}
The relation \eqref{hypg} is obvious. Now with the notation $\beta_{N,k} = \gamma_k + g_0(k) N^{-\alpha}$  we calculate that 
\begin{multline*}
\int_{\mathcal{R}^N} \frac{|\rho_{N}(0,p,q)|^2}{\mu_N(p,q)} \dd p \, \dd q = \prod_{k \in \mathcal{D}_N^+} \int_{\R^2}\frac{\varphi(\beta_{N,k},p_k,q_k)^2}{\varphi(\gamma_k,p_k,q_k)} \dd p_k \, \dd q_k
\\
= \prod_{k \in \mathcal{D}_N^+} \frac{\gamma_k}{\pi\beta_{N,k}^2} \int_{\R^2}e^{  - (\frac{2}{\beta_{N,k}}- \frac{1}{\gamma_k}) (x^2 + y^2)  } \dd x \, \dd y 
=  \prod_{k \in \mathcal{D}_N^+} \frac{\gamma_k}{\beta_{N,k}^2} \frac{1}{\frac{2}{\beta_{N,k}}- \frac{1}{\gamma_k}}
= \prod_{k \in \mathcal{D}_N^+}\frac{\gamma_k^2}{\beta_{N,k} ( 2 \gamma_k - \beta_{N,k})} \\
= \prod_{k \in \mathcal{D}_N^+}\frac{\gamma_k^2}{(\gamma_k + g_0(k) N^{-\alpha}) (  \gamma_k - g_0(k) N^{-\alpha})}
= \prod_{k \in \mathcal{D}_N^+}\frac{\gamma_k^2}{\gamma_k^2 - g_0(k)^2 N^{-2\alpha}}. 
\end{multline*}
The last product is well defined for $N$ large enough, moreover, we have 
$$
\frac{\gamma_k^2}{\gamma_k^2 - g_0(k)^2 N^{-2\alpha}} = 1 + c_{N,k}\quad \mbox{with} \quad c_{N,k} = \mathcal{O}\big(\frac{1}{N^{2\alpha}}\big). 
$$
We deduce that 
\begin{equation}
\label{teleman}
\log \int_{\mathcal{R}^N} \frac{|\rho_{N}(0,p,q)|^2}{\mu_N(p,q)} \dd p \, \dd q = \sum_{k \in \mathcal{D}_N^+} \log  (1 + c_{N,k}) =: R_N \leq C (\mathrm{Card} \mathcal{D}_N^+) N^{-2 \alpha} \leq C N^{2 - 2 \alpha},
\end{equation}
for some constant $C$, and as  $\mathrm{Card} \mathcal{D}_N^+ \leq C N^2$ for some constant $C$. Hence we have that 
$$
\int_{\mathcal{R}^N} \frac{|\rho_{N}(p,q)|^2}{\mu_N(p,q)} \dd p \, \dd q = e^{R_N} = 1 + \mathcal{O}\big(\frac{1}{N^{2\alpha - 2}}\big) 
$$
as $\alpha \geq 1$, 
which shows \eqref{fluct} from \eqref{eq:mkp}. 
\end{proof}

To implement such initial condition,  if $ V_k(0,\omega) = P_k(0,\omega) + i Q_k(0,\omega)$ for $k \in \mathcal{D}_N^+$, we just draw the numbers $P_k(0,\omega)$ and $Q_k(0,\omega)$ with respect to independent normal laws $\mathcal{N}(0,\beta_{N,k})$ with $\beta_{N,k} = \gamma_k + g_0(k) N^{-\alpha}$. 

\begin{remark}
The condition $\alpha \geq 1$ ensures in the proof that $R_N$ is bounded in Eqn. \eqref{teleman}. If $\alpha \leq 1$, we get that $\int \frac{\rho_N^2}{\mu_N} = \mathcal{O}(e^{N^{2 - 2 \alpha}})$ and this prevent the linearization of the equation to occur in Section \ref{linearization} (see estimate \eqref{eqnlinearization}). A priori, larger perturbations of the invariant measure cannot be described by a linear equation and should rely on a truly  nonlinear process. 
\end{remark}

Another way to construct an initial distribution satisfying Hypothesis \ref{hypinit} for $\er = 2$ is done as follows: 
\begin{proposition}
Let $g_0: \mathcal{D}^+ \to \R$ be a given function. There exists $N_0$ and $r_0$ such that for $N \geq N_0$ and $\Norm{g_0}{L^\infty} \leq r_0$, the following holds: 
Let $|\mathcal{D}_N^+|\simeq N^2$ be the cardinal of the set $\mathcal{D}_N^+$, 
then the densities 
\begin{equation}
\label{dimanchematin}
\rho_{N}(0,p,q) = \frac{1}{|\mathcal{D}_N^+|} \sum_{k \in \mathcal{D}_N^+}  \varphi(\beta_{N,k},p_k,q_k) 
\prod_{\substack{ \ell \in \mathcal{D}_N^+ \\ \ell \neq k}}  \varphi(\gamma_\ell,p_\ell,q_\ell) , 
\end{equation}
where 
\begin{equation}
\label{betaNk}
\beta_{N,k} = \gamma_k + \frac{|\mathcal{D}_N^+| }{N^2} g_0(k), 
\end{equation}
 satisfies Hypothesis \ref{hypinit} for the function $g_0$ and $\er = 2$. 
\end{proposition}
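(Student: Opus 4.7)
The plan is to verify separately the two requirements of Hypothesis \ref{hypinit} (with $\er = 2$), exploiting the mixture structure of $\rho_N(0,p,q)$. Writing $D = |\mathcal{D}_N^+|$ and $\rho_N^{(k)}(p,q) := \varphi(\beta_{N,k},p_k,q_k)\prod_{\ell \ne k}\varphi(\gamma_\ell,p_\ell,q_\ell)$, so that $\rho_N(0) = \tfrac{1}{D}\sum_k \rho_N^{(k)}$, everything reduces to Gaussian integrals on a single coordinate plus trivial factors.

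For the moment identity \eqref{hypg}: under $\rho_N^{(j)}$ the variable $P_k^2+Q_k^2$ has expectation $\beta_{N,k}$ if $j=k$ and $\gamma_k$ otherwise, so averaging over the mixture yields
\begin{equation*}
\E_{\rho_N(0)}(P_k^2+Q_k^2) \;=\; \frac{\beta_{N,k} + (D-1)\gamma_k}{D} \;=\; \gamma_k + \frac{\beta_{N,k}-\gamma_k}{D} \;=\; \gamma_k + \frac{g_0(k)}{N^2}
\end{equation*}
by the definition \eqref{betaNk} of $\beta_{N,k}$, giving exactly $\er = 2$.

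For the $L^2$ bound \eqref{fluct}, I use the identity \eqref{eq:mkp} and expand $\rho_N^2 = D^{-2}\sum_{k,k'}\rho_N^{(k)}\rho_N^{(k')}$. For $k \ne k'$, the ratio $\rho_N^{(k)}\rho_N^{(k')}/\mu_N$ factorizes over coordinates into one copy of $\varphi(\beta_{N,k})$ in the $k$-coordinate, one copy of $\varphi(\beta_{N,k'})$ in the $k'$-coordinate, and $\varphi(\gamma_\ell)$ for $\ell \notin \{k,k'\}$; each integrates to $1$, so the off-diagonal contribution is exactly $D(D-1)$. For the diagonal terms $k=k'$, the same calculation performed in the previous proposition gives
\begin{equation*}
\int \frac{(\rho_N^{(k)})^2}{\mu_N}\,\dd p\,\dd q \;=\; \int_{\R^2}\frac{\varphi(\beta_{N,k})^2}{\varphi(\gamma_k)}\,\dd p_k\,\dd q_k \;=\; \frac{\gamma_k^2}{\gamma_k^2 - (D/N^2)^2 g_0(k)^2}.
\end{equation*}
Since $D/N^2$ is bounded, imposing $\|g_0\|_{L^\infty}\le r_0$ small enough guarantees the denominator stays bounded below by $\tfrac12\gamma_k^2$ uniformly in $k$ and $N$, and the ratio equals $1 + \mathcal{O}(g_0(k)^2(D/N^2)^2)$.

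Summing, $\int \rho_N^2/\mu_N = D^{-2}\bigl[D + D(D-1) + \sum_k \mathcal{O}(g_0(k)^2 (D/N^2)^2)\bigr] = 1 + \mathcal{O}(D \cdot (D/N^2)^2 / D^2) = 1 + \mathcal{O}(1/N^2)$, using $D \simeq N^2$. By \eqref{eq:mkp} this is exactly the bound \eqref{fluct} with $\er = 2$. The only genuine constraint is the smallness condition on $\|g_0\|_{L^\infty}$, which is precisely needed to keep $\beta_{N,k}(2\gamma_k - \beta_{N,k}) = \gamma_k^2 - (D/N^2)^2 g_0(k)^2$ uniformly positive; this is the sole (mild) obstacle, and it is handled by the hypothesis on $r_0$.
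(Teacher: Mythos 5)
Your proof is correct and follows essentially the same route as the paper: the mixture structure reduces the moment identity to a one-coordinate Gaussian average, and the $L^2$ bound is obtained by expanding $\rho_N^2$ into off-diagonal terms (each integrating to exactly $1$) plus diagonal terms handled by the single-mode computation of the previous proposition, with the smallness of $\Norm{g_0}{L^\infty}$ keeping the denominators $\gamma_k^2 - (D/N^2)^2 g_0(k)^2$ uniformly positive. The only cosmetic difference is that the paper bounds each diagonal ratio by a uniform constant $C$ rather than writing it as $1+\mathcal{O}(\cdot)$, which yields the same $\mathcal{O}(1/N^2)$ conclusion.
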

\begin{proof}
Let $k \in \mathcal{D}_N^+$.  We first notice that as $|\mathcal{D}_N^+|\simeq N^2$, the numbers $\beta_{N,k}$ are positive for $r_0$ small enough. 
  We calculate that  
\begin{eqnarray*}
\E_{\rho_{N}(0)} (P_k^2 + Q_k^2) &=& 
\int_{\mathcal{R}^N} (p_k^2 + q_k^2) \rho_{N}(0,p,q) \dd p \,\dd q \\ &=& 
\frac{1}{|\mathcal{D}_N^+|} \gamma_k \Big(\sum_{\ell \neq k} 1\Big) +  \frac{1}{|\mathcal{D}_N^+|} \beta_{N,k}\\
&=& \gamma_k \Big( 1 - \frac{1}{|\mathcal{D}_N^+|}\Big) + \frac{1}{|\mathcal{D}_N^+| }\beta_{N,k} = \gamma_k + \frac{g_0(k)}{N^2},  
\end{eqnarray*}
so that \eqref{hypg} holds with $\er = 2$.  
Moreover, we can write 
$$
\rho_{N}(0,p,q) = \left(\frac{1}{|\mathcal{D}_N^+|} \sum_{k \in \mathcal{D}_N^+} \frac{\varphi(\beta_{N,k},p_k,q_k)}{\varphi(\gamma_k,p_k,q_k)} \right)\mu_N(p,q), 
$$
and hence 
\begin{multline*}
\int_{\mathcal{R}^N} \frac{|\rho_{N}(0,p,q)|^2}{\mu_N(p,q)} \dd p \, \dd q = \frac{1}{|\mathcal{D}_N^+|^2}\int_{\mathcal{R}^N} \left|\sum_{k \in \mathcal{D}_N^+} \frac{\varphi(\beta_{N,k},p_k,q_k)}{\varphi( \alpha_k,p_k,q_k)}  \right|^2 \mu_N(p,q) \dd p \, \dd q\\
= \frac{1}{|\mathcal{D}_N^+|^2} \sum_{k \neq \ell} \int_{\mathcal{R}^N} \frac{\varphi(\beta_{N,\ell},p_\ell,q_\ell)}{\varphi(\gamma_\ell,p_\ell,q_\ell)}\frac{\varphi(\beta_{N,k},p_k,q_k)}{\varphi( \gamma_k,p_k,q_k)} \mu_N(p,q) \dd p \, \dd q \\+ \frac{1}{|\mathcal{D}_N^+|^2}\sum_{k \in \mathcal{D}_N^+} \int_{\mathcal{R}^N}\left|\frac{\varphi(\beta_{N,k},p_k,q_k)}{\varphi( \gamma_k,p_k,q_k)}\right|^2 \mu_N(p,q) \dd p \, \dd q. 
\end{multline*}
By doing calculations similar to the ones performed in  the previous proposition, this term is equal to 
$$
\Big( 1 - \frac{1}{|\mathcal{D}_N^+|}\Big) + 
\frac{1}{|\mathcal{D}_N^+|^2}\sum_{k \in \mathcal{D}_N^+} \frac{\gamma_k^2}{\gamma_k^2 - g_0(k)^2 N^{-4}|\mathcal{D}_N^+|^2}
$$
But uniformly in $k$, we have for $N$ large enough that if $\Norm{g_0}{L^\infty} \leq r_0$ small enough, 
$$
\left|\frac{\gamma_k^2}{\gamma_k^2 - g_0(k)^2 N^{-4}|\mathcal{D}_N^+|^2}\right| \leq C
$$
for some uniform constant $C$. This shows that 
$$
\left|\int \frac{|\rho_{N}|^2}{\mu_N} - 1 \right| \leq  \frac{C}{|\mathcal{D}_N^+|}
$$
and hence the result, from \eqref{eq:mkp}. 
\end{proof}
Random variables corresponding to this initial probability density can be constructed as follows: 
Let $\epsilon = (\epsilon_j)_{j\in \mathcal{D}_N^+} \in \{0,1\}^{\mathcal{D}_N^+}$ be random variables such that 
$$
\forall\, j \in \mathcal{D}_N^+, \quad \mathbb{P}( \epsilon = 1_{j}) = \frac{1}{|\mathcal{D}_N^+|}
$$
with $1_{j} = (\delta^j_k)_{k\in \mathcal{D}_N^+} \in \{0,1\}^{\mathcal{D}_N^+}$ where $\delta_k^j$ denotes the Kronecker symbol (see \eqref{eq:1}). For $j \in \mathcal{D}_N^+$, let 
$(X^j,Y^j) = (X_k^j,Y_k^j)_{k\in \mathcal{D}_N^+} \in \mathcal{R}^N$ be random variables of probability density 
\begin{equation}
\label{probaj}
g(\beta_{N,j},p_j,q_j) \prod_{\substack{k \in \mathcal{D}_N^+ \\ k \neq j}} g(\gamma_k,p_k,q_k), 
\end{equation}
where $\beta_{N,k} = \gamma_k + \mathcal{O}( g_0(k))$ is given by \eqref{betaNk}. 
Then the random variable 
\begin{equation}
\label{inivar}
 (P(0,\omega),Q(0,\omega)) = \sum_{j \in \mathcal{D}_N^+} \epsilon_j (X^j,Y^j) \in \mathcal{R}^N
\end{equation}
has the probability density $\rho_N(p,q)$ given by \eqref{dimanchematin}.

The sampling of the initial distribution $(P(0,\omega),Q(0,\omega))$ can be done as follows: 
 One grid index $j\in \mathcal{D}^+_N$ is chosen randomly and uniformly in $\mathcal{D}^+_N$, and $(P_k(0,\omega),Q_k(0,\omega))_{k \in \mathcal{D}_N^+}$ are drawn with respect to the probability density function \eqref{probaj}. 
 
It means that once $j$ is chosen:
\begin{itemize}
\item The modes $(P_j(0,\omega),Q_j(0,\omega))$ are drawn with respect to independent normal laws $\mathcal{N}(0,\beta_{N,j}/2)$ where $\beta_{N,j} = \gamma_j + \mathcal{O}(g_0(j))$ is given by \eqref{betaNk}. 

\item For all the other modes  $k \neq j$, $(P_k(0,\omega),Q_k(0,\omega))$ are drawn with respect to independent normal laws $\mathcal{N}(0,\gamma_k/2)$. \end{itemize}

Note that this type initial condition is reminiscent of the strategy used to prove the convergence of hard-sphere dynamics to the linear Boltzmann equation, where a distribution of hard sphere is chosen randomly fixed, and the trajectory of {\em one} particle is analyzed in this scatterers environment, see for instance \cite{Gallavotti,Desvillettes,Spohn1}. Of course the situation is very different, in particular because we look from the beginning at random initial data, and all the modes are in interaction as soon as $t > 0$. 

%\begin{remark}
%The previous proof extends easily to distributions of the form 
%$$
%\rho_{N,\eta}(p,q) = \frac{1}{|\mathcal{B}^+_\eta|}\sum_{M\in \mathcal{B}^+_\eta} \frac{1}{1 + h(M)} \left( 1 +  h(M) \prod_{\ell\in \mathcal{D}_N^+ \cap I_M^\eta} \frac{g_\ell(p_\ell,q_\ell)}{z_\ell}\right) \mu_N(p,q). 
%$$
%where $g_\ell(p,q)$ are non-negative functions depending smoothly on $k$, with 
%$$
%z_\ell = \frac{\beta\alpha_\ell}{\pi}\int g_\ell(p,q) e^{-\beta \alpha_\ell (p^2 + q^2)} \dd p \dd q. 
%$$
%\end{remark}

\section{Concentration on the resonant manifold}

In this section, we define the linearized wave kinetic equation \eqref{linzakh3} and prove the existence of solutions. Morever, we prove that the solution of the regularized equation  \eqref{linzakhreg}
converges to the solution of \eqref{linzakh3} when $\lambda \to 0$. 
But before that, we need some results on quasi-resonant manifolds. For $m$ and $p$ in $\R^2$, let 
$$
\Omega(m,p) = \omega_m - \omega_{m-p} - \omega_p
$$
and for $z \in \R$, $m_x \neq 0$, let 
\begin{equation}
\label{eq:gammazm}
\Gamma(z,m) = \{p \in \R^2 \,\,  | \, \, 
\Omega(m,p) = z\}. 
\end{equation}
In order to define the resonant kinetic equation, 
we shall define the microcanonical measure on $\Gamma(z,m)$. To this aim, let us first notice that the integrand functions in \eqref{linzakhreg}, see \eqref{eq:LS},  depend on the three variables $m$, $m-p$ and $p$ and vanish if one of them is not in $\mathcal{D}$. Hence it is enough to define the measure on $\Gamma(z,m)$ for $m \in \mathcal{D}$ and $p \in \mathcal{D}$ such that $m - p \in \mathcal{D}$. 
Let $a$ and $b$ be given with $0 < a < b$. By definition of $\mathcal{D}$, we can find $a$ and $b$ such that $\mathcal{D} \subset (a,b) \times (-b,b)$, and we thus see that it will be enough to define the measure on the set of $m$ such that $a< |m_x| <b$, and for 
$$
p \in \mathcal{U}_m^a= \{ p\in \R^2, \,  |p_x| > \frac{a}{2} \quad \mbox{and}\quad  | m_x - p_x| > \frac{a}{2}\}. 
$$
Note moreover that it is enough to obtain a parametrization of $\Gamma(m,z)$ for $m_x > a$, as we notice that 
\begin{equation}
\label{eqsym}
p \in \Gamma(m,z) \Longrightarrow -p \in \Gamma(-m,-z), 
\end{equation}
as can be easily seen from the relation $\omega(-p) = - \omega(p)$. 
\begin{lemma}
\label{lemma21}
With the previous notations, let $z_0 =  \frac{3}{16} a^4$. Then for $a < m_x  < b$, and  $z \in (- z_0, z_0)$, the set
$$
\Gamma(z,m) \cap \mathcal{U}_m^a = \Gamma_a^+(z,m) \cup \Gamma_a^-(z,m)
$$
is the disjoint union of curves that can be parametrized as follows: Let 
$$
I_m:= (-\infty,-\frac{a}{2}) \cup (\frac{a}{2}, m_x - \frac{a}{2}) \cup (m_x + \frac{a}{2},\infty)
$$
and for all $\sigma\in I_m$, define the application $\kappa^{\pm}(\, \cdot\, , z,m): I_m \to \Gamma_{a}^{\pm}(z,m)$ by
$$
\kappa_x^{\pm}(\sigma,z,m) = \sigma,\quad \mbox{and} \quad \displaystyle \kappa_y^{\pm}(\sigma,z,m)   = \sigma \frac{ m_y}{m_x} \pm  \sqrt{3 \eta^{-1}} (m_x-\sigma)   \sigma \sqrt{1  - \frac{z}{3 (m_x - \sigma) m_x \sigma}}.
$$
Then we have
$$
p \in \Gamma^{\pm}_a (z,m)
 \Longleftrightarrow p = \kappa_x^{\pm}(\sigma,z,m), \quad \sigma\in I_m. 
 $$
\end{lemma}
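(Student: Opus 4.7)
The plan is to turn the resonance relation $\Omega(m,p)=z$ into an explicit quadratic equation in the single unknown $p_y$ at each fixed $\sigma=p_x$, and then read off the two branches from the quadratic formula.

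First, I would explicitly expand $\Omega(m,p)$ using $\omega_k = k_x^3 + \eta k_y^2/k_x$. The cubic part simplifies by the identity $m_x^3 - (m_x - p_x)^3 - p_x^3 = 3 m_x(m_x - p_x)p_x$. For the rational part, combining the three terms $m_y^2/m_x - (m_y - p_y)^2/(m_x - p_x) - p_y^2/p_x$ over the common denominator $m_x(m_x - p_x) p_x$, a direct expansion collapses the numerator to the perfect square $-(m_y p_x - m_x p_y)^2$, yielding
\[
\Omega(m,p) = 3 m_x (m_x - p_x) p_x - \eta\,\frac{(m_y p_x - m_x p_y)^2}{m_x(m_x - p_x) p_x}.
\]
This factorization is the structural core of the lemma: at fixed $\sigma \in I_m$ (so that both $\sigma$ and $m_x - \sigma$ are bounded away from zero), $\Omega(m,p)=z$ becomes a quadratic in $p_y$ whose solution reads
\[
m_y \sigma - m_x p_y = \pm \sqrt{3\eta^{-1}}\, m_x (m_x - \sigma)\sigma \sqrt{1 - \frac{z}{3(m_x - \sigma) m_x \sigma}},
\]
where the $\pm$ from the square root absorbs the sign of $(m_x-\sigma)\sigma$, explaining why the lemma writes $(m_x - \sigma)\sigma$ rather than its absolute value. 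Dividing by $-m_x$ and rearranging recovers exactly the formulas $\kappa_y^{\pm}(\sigma,z,m)$ from the statement. Each branch is injective since its $x$-coordinate is $\sigma$; surjectivity onto $\Gamma(z,m)\cap \mathcal{U}_m^a$ follows because any such $p$ satisfies the quadratic for $\sigma := p_x$, and the conditions defining $\mathcal{U}_m^a$ coincide exactly with those defining $I_m$.

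Second, I would verify that the square root is real and that the two branches are disjoint. Both facts reduce to the strict inequality $|z| < |3 m_x(m_x-\sigma)\sigma|$ for all $\sigma \in I_m$ and $m_x \in (a,b)$. On the middle subinterval $(a/2,\,m_x-a/2)$, the concave map $\sigma \mapsto \sigma(m_x - \sigma)$ attains its minimum over the closed interval at the endpoints, where it equals $(m_x - a/2)(a/2)$; combined with $m_x > a$ this yields a uniform lower bound from which the threshold $z_0 = 3a^4/16$ is an admissible choice. On the two outer subintervals the analogous endpoint estimate gives a strictly larger bound. Strict positivity of the square root then forces $\kappa^{+}(\sigma,z,m) \neq \kappa^{-}(\sigma,z,m)$ for every $\sigma \in I_m$, and hence $\Gamma_a^{+}(z,m) \cap \Gamma_a^{-}(z,m) = \emptyset$. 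The case $m_x < -a$ is handled by the symmetry \eqref{eqsym}.

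The main---and really the only nonroutine---obstacle is the algebraic reduction producing the perfect-square factorization of $\Omega$; once that identity is in hand, the proof is an application of the quadratic formula together with elementary endpoint bounds on the three subintervals making up $I_m$.
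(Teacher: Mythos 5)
Your proposal is correct and follows essentially the same route as the paper: both reduce $\Omega(m,p)=z$ at fixed $p_x=\sigma$ to a quadratic in $p_y$ whose discriminant collapses to a perfect square, the paper doing this via the substitution $m_x = s\,p_x$ leading to $(m_y - s p_y)^2 = 3s^2(s-1)^2p_x^4 - s(s-1)p_xz$, which is exactly your identity $\Omega = 3m_x(m_x-p_x)p_x - \eta\,(m_yp_x-m_xp_y)^2/(m_x(m_x-p_x)p_x)$ in disguise. Your added endpoint analysis of $\sigma(m_x-\sigma)$ on the three subintervals of $I_m$ and the disjointness of the two branches supply details the paper leaves implicit, so nothing is missing.
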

\begin{proof}
Recall that for $k = (k_x,k_y)$ with $k_x \neq 0$, we have 
$$
\omega_k = k_x^3 + \eta \frac{k_y^2}{k_x}. 
$$
After a scaling $k_y \mapsto k_y/\sqrt{\eta}$, we see that we can reduce the analysis to the case $\eta = 1$. 

Let us first note that for a given $m$,  the condition  $p \in  \mathcal{U}_m^a$ is equivalent to $p_x \in I_m$. For such a $p_x$, 
$p = (p_x,p_y)$ is in $\Gamma(z,m)$ if and only if
$$
m_x^3 + \frac{m_y^2}{m_x} = (m_x - p_x)^3  + p_x^3 + \frac{(m_y - p_y)^2}{(m_x - p_x)} + \frac{p_y^2}{p_x} + z, 
$$
which is equivalent to 
\begin{equation}
\label{preres}
\frac{m_y^2}{m_x} = - 3 m_x^2 p_x + 3 m_x p_x^2 + \frac{(m_y - p_y)^2}{(m_x - p_x)} + \frac{p_y^2}{p_x} + z.
\end{equation}
Let us set $m_x = s p_x$, with $s \neq 0$. we have $m_x - p_x = (s -1) p_x$ with $s-1 \neq 0$ as $p_x \in I_m$. The previous relation yields 
\begin{eqnarray*}
\frac{m_y^2}{s p_x}& =& - 3 s^2 p_x^3 + 3 s p_x^3 + \frac{(m_y - p_y)^2}{(s-1)p_x} + \frac{p_y^2}{p_x} + z\\
&=& - 3 s (s-1) p_x^3 + \frac{(m_y - p_y)^2}{(s-1)p_x} + \frac{p_y^2}{p_x} + z
\end{eqnarray*}
and the equation is thus equivalent to 
\begin{eqnarray*}
3 s^2 (s-1)^2 p_x^4 &=& - (s-1) m_y^2 + s (m_y - p_y)^2 + s(s-1) p_y^2 + s(s-1)   p_x   z\\
&=& ( m_y - s p_y)^2 + s(s-1)  p_x    z
\end{eqnarray*}
which implies that 
\begin{equation}
\label{korber}
(m_y - s p_y)^2 = 3 s^2 (s-1)^2 p_x^4 - s(s-1)  p_x   z. 
\end{equation}
Hence we have 
$$
p_y = \frac{1}{s} m_y \pm \frac{1}{s} \sqrt{3 s^2 (s-1)^2 p_x^4 - s(s-1)  p_x   z} 
$$
which yields 
\begin{eqnarray*}
p_y &=& p_x \frac{ m_y}{m_x} \pm   \frac{p_x}{m_x} \sqrt{3 (m_x - p_x)^2 m_x^2  - \frac{(m_x - p_x) m_x}{  p_x  } z} \\
%&=& p_x \frac{ m_y}{m_x} \pm   \frac{m_x}{p_x} \sqrt{3 (m_x - p_x)^2 \frac{p_x^4}{m_x^2}  - \frac{m_x - p_x}{m_x}z}\\
&=& p_x \frac{ m_y}{m_x} \pm  \sqrt{3} (m_x-p_x) p_x \sqrt{1  - \frac{z}{3 (m_x - p_x) m_x   p_x  }} 
\end{eqnarray*}
which is well defined for $z \in (-z_0,z_0)  $ given in the statement of the Lemma. 
Note that the resonant manifold  is parametrized by 
\begin{equation}
\label{resmani}
p_y = p_x \frac{ m_y}{m_x} \pm \sqrt{3} (m_x - p_x) p_x 
\end{equation}
which are branches of parabola. \end{proof}

With this result in hand, we can define the microcanonical measure on the quasi-resonant manifolds: 

\begin{definition}
Let $\phi(m,j,p)$ be a smooth function with support in $\mathcal{D}^3$. Then we set for $m \in \mathcal{D}^+$ and $z \in (-z_0,z_0)$, 
\begin{multline}
\label{defres}
\int_{\substack{m = j + p \\ \omega_m = \omega_j + \omega_p + z}}  \phi(m,j,p) \dd \Sigma( j, p) :=\\ \sum_{ \pm }\int_{I_m} \phi(m,m-\kappa^{\pm}(\sigma,z,m),\kappa^{\pm}(\sigma,z,m)) \frac{| \partial_\sigma \kappa_y^{\pm}(\sigma,z,m)|}{\Norm{\nabla_p \Omega(m,\kappa^{\pm}(\sigma,z,m))}{}}\dd \sigma
\end{multline}
and a symmetric formula for $m \in \mathcal{D}^-$ using \eqref{eqsym}, 
where $\Norm{\nabla_p \Omega(m,q)}{}$ denote the Euclidean norm of the gradient in $p$ of the function $\Omega(m,p)$ evaluated at the point $q$. 
\end{definition}
Using this explicit formula, we can easily see the following result (recall that $W^{1,\infty}(\mathcal{U})$ is the space of functions in  $L^\infty(\mathcal{U})$ with gradient in  $L^\infty(\mathcal{U})$ ). 
\begin{lemma}
\label{eqpl}
 If $\phi \in \mathcal{C}^1(\mathcal{D}^3)$ with support included in $\mathcal{D}^3$, then the application 
$$
(z,m) \mapsto \Phi(z,m) := \int_{\substack{m = j + p \\ \omega_m = \omega_j + \omega_p + z}}  \phi(m,j,p) \dd \Sigma(j, p) 
$$
is of class $\mathcal{C}^1$ on $(-z_0,z_0) \times \mathcal{D}$,  and we have 
\begin{equation}
\label{estder}
\Norm{\Phi(z,m)}{W^{1,\infty}((-z_0,z_0) \times \mathcal{D})} \leq C \Norm{\phi}{W^{1,\infty}(\mathcal{D}^3)}. 
\end{equation}
\end{lemma}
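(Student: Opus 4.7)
The strategy is to read everything off the explicit parametrization of Lemma~\ref{lemma21} and then differentiate under the integral sign. First I would use the support condition $\mathrm{supp}\,\phi\subset\mathcal{D}^3$, together with $\kappa_x^\pm(\sigma,z,m)=\sigma$, to show that the integrand vanishes unless $a\le|\sigma|\le b$ and $a\le|m_x-\sigma|\le b$. This confines the effective $\sigma$-domain to a compact subset of $I_m$ whose endpoints remain uniformly separated from those of $I_m$ by at least $a/2$; in particular, in a neighborhood of any fixed $m_0$ one may replace $I_m$ by a fixed compact interval, so that no boundary terms arise when differentiating in $m$.

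On this localized set I would check that each factor of the integrand is $C^1$ in $(\sigma,z,m)$ with bounds depending only on $a$, $b$, $\eta$. The only non-trivial point here is that the radicand $1-z/[3(m_x-\sigma)m_x\sigma]$ appearing in $\kappa_y^\pm$ stays bounded away from $0$: the lower bounds $|\sigma|,|m_x-\sigma|\ge a/2$ and $|m_x|\ge a$ give $|3(m_x-\sigma)m_x\sigma|\ge 3a^3/4$, and the choice $|z|<z_0=3a^4/16$ then yields $|z/[3(m_x-\sigma)m_x\sigma]|\le a/4$, which is exactly the scaling used in Lemma~\ref{lemma21}. Hence $\kappa_y^\pm$ and $\partial_\sigma\kappa_y^\pm$ are $C^1$ and uniformly bounded on the relevant set.

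The main -- and I expect the subtlest -- analytic step is a uniform lower bound on the denominator $\|\nabla_p\Omega(m,\kappa^\pm(\sigma,z,m))\|$, because a priori the microcanonical measure is only well-defined where this gradient does not vanish. Using the explicit dispersion $\omega_k=k_x^3+\eta k_y^2/k_x$ one computes
\[
\partial_{p_y}\Omega(m,p)=\frac{2\eta(m_y-p_y)}{m_x-p_x}-\frac{2\eta p_y}{p_x}.
\]
Substituting $p=\kappa^\pm(\sigma,z,m)$, the two copies of $m_y/m_x$ cancel and the $\pm\sqrt{3\eta^{-1}}$ contributions combine into $\pm\sqrt{3\eta^{-1}}[\sigma+(m_x-\sigma)]=\pm\sqrt{3\eta^{-1}}m_x$, yielding the clean identity
\[
\partial_{p_y}\Omega(m,\kappa^\pm(\sigma,z,m))=\mp\,2\sqrt{3\eta}\,m_x\sqrt{1-\tfrac{z}{3(m_x-\sigma)m_x\sigma}}.
\]
Combined with $|m_x|\ge a$ and the previous step, this gives $\|\nabla_p\Omega\|\ge|\partial_{p_y}\Omega|\ge c(a,\eta)>0$ uniformly on the support of the integrand.

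With all factors $C^1$, the denominator uniformly bounded below and the $\sigma$-support of uniformly bounded length, the classical theorem on differentiation under the integral sign applies. Differentiating in $z$ and in $m_x,m_y$ produces two kinds of contributions: those involving $\partial\phi$, which are controlled by $\|\phi\|_{W^{1,\infty}(\mathcal{D}^3)}$ times a constant depending only on $a,b,\eta$; and those involving $\phi$ times derivatives of the geometric factors $\partial_\sigma\kappa_y^\pm$ and $\|\nabla_p\Omega\|^{-1}$, which are controlled by $\|\phi\|_{L^\infty}$ times a constant of the same type. Taking $L^\infty$ norms over $(z,m)\in(-z_0,z_0)\times\mathcal{D}$ yields simultaneously the $C^1$ regularity of $\Phi$ and the estimate \eqref{estder}.
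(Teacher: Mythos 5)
Your proposal is correct and follows exactly the route the paper intends (the paper offers no written proof, merely asserting the lemma follows "easily" from the explicit parametrization of Lemma~\ref{lemma21} and the definition \eqref{defres}): localize the $\sigma$-integration using the support of $\phi$, check the radicand stays away from zero, bound $\|\nabla_p\Omega\|$ from below, and differentiate under the integral sign. Your computation of the key identity $\partial_{p_y}\Omega(m,\kappa^{\pm}(\sigma,z,m))=\mp 2\sqrt{3\eta}\,m_x\sqrt{1-z/[3(m_x-\sigma)m_x\sigma]}$ is correct and supplies precisely the uniform non-degeneracy of the microcanonical density that the paper leaves implicit.
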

We will now gives the following result showing the approximation of the linearized wave kinetic equation by the almost-resonant equation: 
\begin{theorem}
\label{th3}
Let $g(m) \in W^{1,\infty}(\mathcal{D})$ and $T> 0$ be given. Then there exists $f(t,m) \in W^{1,\infty}(\mathcal{D})$ solution of \eqref{linzakh2} on $[0,T]$ with $f(0,m) = g(m)$. Moreover, 
there exists $\lambda_0 > 0$ such that for all $0 < \lambda \leq \lambda_0$, the equation \eqref{linzakhreg} with initial data $f_\lambda(0,m) = g(m)$ 
admits solutions in $L^{\infty}(\mathcal{D})$ in $[0,T]$ that are uniformly bounded with respect to $\lambda \in [0,\lambda_0]$. Moreover,  there exists $C$ such that 
for all $\lambda \leq \lambda_0$, we have 
\begin{equation}
\label{th3conv}
\sup_{t \in [0,T]}
\Norm{f_\lambda(t,m) - f(t,m)}{L^\infty(\mathcal{D})} \leq C \sqrt{\lambda}. 
\end{equation}
\end{theorem}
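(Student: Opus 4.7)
The plan is to treat both \eqref{linzakh3} and \eqref{linzakhreg} as linear ODEs $\partial_t f = \mathcal{L} f$ and $\partial_t f_\lambda = \mathcal{L}_\lambda f_\lambda$ in the Banach space $L^\infty(\mathcal{D})$, to show that these operators are bounded uniformly in $\lambda$, and then to deduce \eqref{th3conv} by a Duhamel comparison whose source term is controlled by a standard Lorentzian-approximates-Dirac estimate against the microcanonical density supplied by Lemma \ref{eqpl}.

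First I would verify that $\mathcal{L}$ is bounded on $L^\infty(\mathcal{D})$, and in fact on $W^{1,\infty}(\mathcal{D})$. Thanks to the cutoff $\psi$ defining $\Psi_{mjp}$ and the cancellation between the factor $|k_x|$ in $\Psi^2$ and $\gamma_k = 1/|k_x|$, the kernels $L$ and $S$ in \eqref{eq:LS} are smooth and compactly supported on $\mathcal{D}^3$, so Lemma \ref{eqpl} applied to the three integrands of \eqref{linzakh3} gives $\|\mathcal{L} h\|_{W^{1,\infty}} \leq C \|h\|_{W^{1,\infty}}$, and Picard--Lindel\"of produces $f \in C^1([0,T], W^{1,\infty}(\mathcal{D}))$ with $f(0)=g$. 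For $\mathcal{L}_\lambda$, applying the coarea formula to the foliation $\{p \in \R^2 : \Omega(m,p) = z\}$ (for which $|\nabla_p \Omega|$ is bounded below on the support, as follows from the explicit computation in Lemma \ref{lemma21} together with the fact that the cutoffs force $|m_x|, |p_x|, |m_x - p_x|$ to stay above some $a/2 > 0$) yields
$$\mathcal{L}_\lambda h(m) \;=\; \frac{1}{\pi} \int_\R \frac{\lambda}{z^2 + \lambda^2}\, \Phi_{h,m}(z)\, \dd z,$$
with $\Phi_{h,m}$ given by Lemma \ref{eqpl} and bounded by $C \|h\|_{L^\infty}$. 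Since $\frac{1}{\pi}\int_\R \frac{\lambda}{z^2+\lambda^2}\,\dd z = 1$, we get $\|\mathcal{L}_\lambda\|_{L^\infty \to L^\infty} \leq C$ uniformly in $\lambda \in (0, \lambda_0]$, whence existence of $f_\lambda \in C^1([0,T], L^\infty(\mathcal{D}))$ with the asserted uniform bound.

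Setting $e_\lambda := f_\lambda - f$, the equation $\partial_t e_\lambda = \mathcal{L}_\lambda e_\lambda + (\mathcal{L}_\lambda - \mathcal{L}) f$ combined with Duhamel and the uniform bound on $\mathcal{L}_\lambda$ gives $\|e_\lambda(t)\|_{L^\infty} \leq e^{CT} \int_0^t \|(\mathcal{L}_\lambda - \mathcal{L}) f(s)\|_{L^\infty}\, \dd s$, so \eqref{th3conv} reduces to showing that for every smooth $\phi$ compactly supported in $\mathcal{D}^3$,
$$\sup_{m \in \mathcal{D}} \Bigl| \frac{1}{\pi} \int_{\R^2} \frac{\lambda\, \phi(m,m-p,p)}{\Omega(m,p)^2 + \lambda^2}\, \dd p - \int_{\Omega(m,p) = 0} \phi(m,m-p,p)\, \dd \Sigma \Bigr| \;\leq\; C \sqrt{\lambda}\, \|\phi\|_{W^{1,\infty}(\mathcal{D}^3)},$$
applied to the three integrands coming from $L f(s,m)$, $S f(s,p)$, $S f(s,m-p)$ (all in $W^{1,\infty}(\mathcal{D}^3)$ since $f(s) \in W^{1,\infty}$). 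Using the coarea change of variable a second time, this reduces to
$$\Bigl| \frac{1}{\pi} \int_\R \frac{\lambda}{z^2 + \lambda^2}\, \bigl(\Phi(z,m) - \Phi(0,m)\bigr)\, \dd z \Bigr| \;\leq\; C \sqrt{\lambda},$$
where $\Phi$ is the Lipschitz density from Lemma \ref{eqpl}. Splitting at $|z| = \sqrt{\lambda}$: on $|z| \leq \sqrt{\lambda}$ the Lipschitzness of $\Phi$ and the bound $\frac{1}{\pi} \int \frac{\lambda}{z^2 + \lambda^2}\,\dd z \leq 1$ give a contribution $O(\sqrt{\lambda})$; on $|z| > \sqrt{\lambda}$ the majoration $\frac{\lambda}{z^2 + \lambda^2} \leq \lambda / z^2$ together with $\|\Phi\|_{L^\infty} < \infty$ and $\int_{|z| > \sqrt{\lambda}} \lambda / z^2\, \dd z = 2\sqrt{\lambda}$ again gives $O(\sqrt{\lambda})$.

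The main technical obstacle I anticipate is the globalization of Lemma \ref{eqpl} (stated only for $|z| < z_0$) to the full real line needed in the Lorentzian integral: for $|z|$ larger than some constant the level set $\{\Omega(m,\cdot) = z\}$ exits the support of $\phi$ and $\Phi(\cdot, m) \equiv 0$ there, but one must still verify that $\Phi$ is bounded in $L^\infty$ in the transition regime and that the coarea identity actually holds on all of $\R$, which in turn requires a uniform positive lower bound on $|\nabla_p \Omega|$ over the full support of $\phi$. From the identity $(m_y - s p_y)^2 = 3 s^2 (s-1)^2 p_x^4 - s(s-1) p_x z$ (with $s = m_x/p_x$) derived in the proof of Lemma \ref{lemma21}, and the fact that $|m_x|, |p_x|, |m_x - p_x| \geq a/2$ on the support of $\phi$, this lower bound is extracted by a direct computation. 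Once this globalization is in place, the argument sketched above is entirely routine.
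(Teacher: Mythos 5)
Your proposal is correct and follows essentially the same route as the paper: both equations are treated as linear ODEs with bounded generators on $L^\infty(\mathcal{D})$, Gronwall reduces \eqref{th3conv} to bounding $\Norm{(\mathcal{L}-\mathcal{L}_\lambda)f}{L^\infty}$, and that bound comes from the Lipschitz regularity in $z$ of the microcanonical density supplied by Lemma \ref{eqpl} together with a split of the Lorentzian integral (you cut at $|z|=\sqrt{\lambda}$, the paper at $|z|=\lambda^{1/4}$; both give $O(\sqrt{\lambda})$). The one "technical obstacle" you anticipate --- globalizing the coarea identity and the lower bound on $|\nabla_p\Omega|$ to all of $\R$ --- is sidestepped in the paper by never applying the coarea formula outside $\{|\Omega(m,p)|<\lambda^{1/4}\}\subset\{|z|<z_0\}$: on the complement one simply bounds the kernel pointwise by $\lambda/\lambda^{1/2}=\sqrt{\lambda}$ and integrates over the bounded set $\mathcal{D}$, which already yields the required $O(\sqrt{\lambda})$ tail without any geometric input.
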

\begin{proof}
We can write the equations \eqref{linzakh2} and \eqref{linzakhreg} as 
\begin{equation}
\label{eq:L}
\partial_t f = \mathcal{L} f, \quad \mbox{and} \quad \partial_t f_\lambda = \mathcal{L}_\lambda f_\lambda. 
\end{equation}
Note that if $\Norm{\cdot }{L^\infty}$ denotes the operator norm induced by $L^\infty(\mathcal{D})$, we have 
$$
\Norm{\mathcal{L}}{L^\infty} \leq \sup_{m \in \mathcal{D}}\int_{\substack{m = j + p \\ \omega_m = \omega_j + \omega_p}} \big(| L(m,j,p)|   +  |S(m,j,p)| +  |S(m,p,j)| \big) \dd \Sigma(j,p) 
$$
and using Lemma \ref{eqpl} and the fact that the functions $| L(m,j,p)|$ and   $|S(m,j,p)|$ are smooth with compact support in $\mathcal{D}^3$, we have that $\Norm{\mathcal{L}}{L^\infty} < +\infty$ ensuring the global existence of \eqref{linzakh2} in $L^\infty(\mathcal{D})$. Moreover, by taking the gradient of \eqref{linzakh3} with respect to $m$, and using \eqref{estder}, we see that this solution is in $W^{1,\infty}(\mathcal{D})$ for all times.  

Now, we calculate that 
\begin{multline}
\label{estL}
\Norm{\mathcal{L_\lambda}}{L^\infty} \leq \\
\sup_{m \in \mathcal{D}} \frac{1}{\pi}\int \frac{\lambda }{( \omega_m - \omega_{m-p} - \omega_p)^2 + \lambda^2}   (| L(m,m-p,p)|+ | S(m,m-p,p)| + | S(m,p,m-p) | ) \dd p.
\end{multline}
To estimate this integral, we decompose the domain $\mathcal{D}$ between the set $\mathcal{U}_\lambda = \{ p \in \mathcal{D}\, |\,  | \omega_m - \omega_{m-p} - \omega_p| < \lambda^{1/4}\}$ and its complementary $\mathcal{D}\backslash\mathcal{U}_\lambda$. For $\lambda$ small enough, then the sets $\{p, \, | \,  \omega_m - \omega_{m-p} - \omega_p = z\} = \Gamma(z,m)$ are smooth submanifolds foliating $\mathcal{U}_\lambda$ for $z \in (-\lambda^{1/4},\lambda^{1/4})$. On the complementary set, the integral is simply bounded by $C \sqrt{\lambda}$ for some constant $C$ depending only on $\mathcal{D}$. 
Hence, using the co-area formula, we obtain that 
$$ 
\Norm{\mathcal{L_\lambda}}{L^\infty} \leq \\
\sup_{m \in \mathcal{D}} \frac{1}{\pi}\int_{-\lambda^{1/4}}^{+\lambda^{1/4}} 
\frac{\lambda }{z^2 + \lambda^2}   R(z,m) \dd z   + C  \sqrt{\lambda}. 
$$
where 
$$
R(z,m) = \int_{\substack{m = j + p \\ \omega_m = \omega_j + \omega_p + z}} (| L(m,j,p)|   +  |S(m,j,p)| +  |S(m,p,j)|)  \dd \Sigma(j,p)
$$
is a smooth function with compact support in $\mathcal{D}$ by the previous Lemma. 
Hence we get 
$$ 
\Norm{\mathcal{L_\lambda}}{L^\infty} \leq \\
\sup_{m \in \mathcal{D}} \frac{1}{\pi}\int_{-\lambda^{-3/4}}^{+\lambda^{-3/4}}
\frac{1}{z^2 + 1}     R(\lambda z,m) \dd z   + C \sqrt{\lambda} \leq C (\sup_{m,|z|\leq z_0} |R(z,m)| + \sqrt{\lambda}). 
$$
This shows that the equations \eqref{linzakhreg} are well posed on $L^\infty(\mathcal{D})$ with uniform bounds with respect to $\lambda \leq \lambda_0$. 
Now we have 
$$
\partial_t (f - f_\lambda) = \mathcal{L}_{\lambda}( f - f_\lambda) + (\mathcal{L} - \mathcal{L}_\lambda) f, 
$$
and the previous estimates combined with the Gronwall Lemma ensure that 
\begin{equation}
\label{tintin}
\sup_{t \in [0,T]} \Norm{f(t) - f_\lambda(t)}{L^\infty} \leq C \sup_{t \in [0,T]} \Norm{(\mathcal{L} - \mathcal{L}_\lambda) f(t)}{L^\infty}. 
\end{equation}
Now to estimate the last term, we use the same estimate as before: Using the fact that $f$ is bounded, we obtain 
$$
\Norm{(\mathcal{L} - \mathcal{L}_\lambda) f(t)}{L^\infty} \leq C \sqrt{\lambda} + \sup_{m \in \mathcal{D}} \left| \frac{1}{\pi}\int_{-\lambda^{1/4}}^{+\lambda^{1/4}} 
\frac{\lambda }{z^2 + \lambda^2}   h(t,z,m) \dd z - h(t,0,m) \right|
$$
where 
$$
h(t,z,m) = \int_{\substack{m = j + p \\ \omega_m = \omega_j + \omega_p + z}} (  L(m,j,p)f(t,m)   +  S(m,j,p) f(t,p) +  S(m,p,j) f(t,j))  \dd \Sigma(j,p). 
$$
Now as  $f(t,m)$ is uniformly bounded in $W^{1,\infty}$, we see by using the previous Lemma that $h(t,z,m)$ is Lipschitz in $(z,m)$. Hence we have 
\begin{eqnarray*}
\frac{1}{\pi}\int_{-\lambda^{1/4}}^{+\lambda^{1/4}} 
\frac{\lambda }{z^2 + \lambda^2}   h(t,z,m)  &=& \frac{1}{\pi}\int_{-\lambda^{-3/4}}^{\lambda^{-3/4}} 
\frac{1 }{z^2 + 1}   h(t,\lambda z,m) \dd z \\
&=& h(t,0,m) \left(\frac{1}{\pi}\int_{-\lambda^{-3/4}}^{+\lambda^{-3/4}} \frac{1 }{z^2 + 1}  \dd z \right) + \mathcal{O}(\lambda) \\
&=& h(t,0,m) + \mathcal{O}(\lambda^{3/4}), 
\end{eqnarray*}
which shows that 
\begin{equation}
\label{LL}
\Norm{(\mathcal{L} - \mathcal{L}_\lambda) f(t)}{L^\infty} \leq C \sqrt{\lambda}
\end{equation}
for $\lambda \leq \lambda_0$ small enough which shows the result, from \eqref{tintin}. 
\end{proof}
\begin{remark}
\label{rkderivees}
The convergence of $f_\lambda$ towards $f$ does not hold in $W^{1,\infty}$. Indeed, the evolution of the gradient of $f_\lambda$ is driven by terms of order $\mathcal{O}(\lambda^{-2})$ coming from the derivatives of the singular term $\lambda/(z^2 + \lambda^2)$. More precisely, using the estimate 
\begin{equation}
\label{traindijon}
\forall\, z \in \R,\quad \frac{\lambda |z|}{(z^2 + \lambda^2)^2} \leq \frac{1}{\lambda^2}
\end{equation} 
and the fact that $\Norm{f_\lambda}{L^\infty}$ is uniformly bounded with respect to $\lambda$
we easily obtain that 
\begin{equation}
\label{mamachou}
\sup_{t \in [0,T]}\Norm{f_\lambda(t)}{W^{1,\infty}} \leq \frac{C}{\lambda^2}, 
\end{equation}
for some constant $C$ and $\lambda$ small enough. 
\end{remark}

\section{Iterated Duhamel formula}
We now look at the evolution of the momenta $\langle V^{\xi} \widebar V^{\zeta}\rangle(t)$ for $\xi$ and $\zeta$ in $\N^{\mathcal{D}_N^+}$ (see Notation \ref{nota} and \eqref{brack} for the definition of the averages). 

Let $G(p,q) = G(v,\bar v)$ be a function of the variables $(p_k,q_k)_{k \in \mathcal{D}_N^+}$ or equivalently of the variables $(v_k,\bar v_k)_{k \in \mathcal{D}_N^+}$ after the identification $v_k = p_k + i q_k$. The evolution of $\langle G(V(t),\widebar{V}(t)) \rangle$ is given by the Kolmogorov equation (which is the adjoint of \eqref{FKP}) 
$$
\frac{\dd}{\dd t } \langle G(V,\widebar V) \rangle(t) =  \langle \{H_N^\varepsilon, G\}(V,\widebar{V}) \rangle (t) + \delta \langle (L_N G)(V ,\widebar{V}) \rangle(t), 
$$
with $H_N^\varepsilon = \Omega_N + \varepsilon K_N$ (see \eqref{HOK}). Let us take $G(v,\bar v) = v^\xi \bar v^\zeta$. 
We calculate that $(p_k \partial_{q_k} - q_k \partial_{p_k}) v_k = i p_k - q_k =  i v_k  $ and $(p_k \partial_{q_k} - q_k \partial_{p_k}) \bar v_k = - i p_k - q_k = - i \bar v_k$. Note in particular that we have $(p_k \partial_{q_k} - q_k \partial_{p_k}) (|v_k|^2) = 0$. Hence, we see that 
$$
\{\Omega_N, v^{\xi} \bar v^{\zeta}\} = \sum_{k \in \mathcal{D}_N^+} \omega_k (p_k \partial_{q_k} - q_k \partial_{p_k})  v^{\xi} \bar v^{\zeta} = i \omega\cdot  (\xi - \zeta)  v^{\xi} \bar v^{\zeta}, 
$$
where $\omega = (\omega_k)_{k \in \mathcal{D}_N^+}$ and $\omega \cdot \xi = \sum_{ k \in \mathcal{D}_N^+} \xi_k \omega_k$. 
Similarly, 
$$
L_N (v^{\xi} \bar v^{\zeta}) = - | \xi - \zeta|^2  v^{\xi} \bar v^{\zeta}. 
$$
Note that $|\xi - \zeta|^2 = \sum_{k \in \mathcal{D}_N^+} |\xi_k - \zeta_k|^2$ is essentially a measure of the number of angles in the monomial $v^{\xi}\bar v^{\zeta}$. 
Hence, we can write the equations on the momenta as 
$$
\partial_t \langle V^{\xi} \bar V^{\zeta} \rangle = (i (\xi - \zeta) \cdot \omega - \delta |\xi - \zeta|^2 ) \langle V^{\xi} \widebar V^{\zeta} \rangle + \varepsilon \sum_{\mu,\kappa}
\mathrm{Y}_{\mu\kappa} \langle V^{\mu} \widebar V^{\kappa} \rangle 
$$
where the sum is taken over multiindices $(\mu,\kappa)$ of global length $\sigma(\mu) + \sigma(\kappa) = \sigma(\xi) + \sigma(\zeta) + 1$, and where
 $\mathrm{Y}_{\mu,\kappa}$ are coefficients depending on $\Psi_{nk\ell}$ and $N$ (the precise combinatoric will be only needed for the first terms). 
 
 In the following, we set 
 $$
 \langle V^{\xi} \bar V^{\zeta} \rangle_*(t) = e^{-i (\xi - \zeta) \cdot \omega + t \delta |\xi - \zeta|^2 }  \langle V^{\xi} \widebar V^{\zeta} \rangle(t).  
 $$
 These momenta satisfy a hierarchy of equations of the form 
 \begin{equation}
 \label{mkp}
 \partial_t \langle V^{\xi} \bar V^{\zeta} \rangle_* =  \varepsilon \sum_{\mu,\kappa}e^{-i (\xi - \zeta - \mu + \kappa) \cdot \omega + t \delta ( |\xi - \zeta|^2 - |\mu - \kappa|^2) } 
\mathrm{Y}_{\mu\kappa} \langle V^{\mu} \widebar V^{\kappa} \rangle_* . 
 \end{equation}
 Note that when $\xi = \zeta$, that is when $V^{\xi} \bar V^{\zeta} = |V^\xi|^2$, we have $\langle |V^\xi|^2 \rangle_* = \langle |V^\xi|^2 \rangle$.

 We consider now the evolution of the fluctuations   (see \eqref{defluc1})
 \begin{equation}
 \label{eqFm}
 \forall\, m \in \mathcal{D}_N^+,\quad 
 F^N_m(t) := N^\er \langle |V_m|^2\rangle -  N^\er \gamma_m,
 \end{equation}
 and the coarse-grained averages (see \eqref{cells} and \eqref{defluc2})
 $$
 \forall\, K \in \mathcal{G}_h^+, \quad F^{N,h}_K(t)  = \frac{1}{h^2 N^2} \sum_{ m \in \mathcal{C}^{N,h}_K} F_m^N(t), 
 $$
 where $\mathcal{G}_h^+  = \mathcal{G}_h \cap \mathcal{D}^+$ is the intersection of the coarse grid \eqref{coarsegrid} of mesh $h$ with the right half-plane. 
Note that $F^N_m(0) = f_0(m)$ for all $m \in \Z_N^2$,  using Hypothesis \ref{hypinit}, and hence as the support of $f_0(m)$ is in $\mathcal{D}$, we can define $F_m^N(t) = 0$ for $m \notin \mathcal{D}_N$ so that the average $F_K^{N,h}(t)$ is defined without ambiguity. Moreover, as $f_0$ is smooth, we have  that 
$$
\forall\, K \in \mathcal{G}_h,\quad 
F^{N,h}_K(0) = f_0(K) + \mathcal{O}(h), 
$$
where here and in the following,  $\mathcal{O}(h)$ means here a term that is uniformly bounded by $C h$ with a constant depending only on $\mathcal{D}$ and $f_0$ (and later the final time $T$).

We first write \eqref{mkp} for $\xi = \zeta = 1_{m}$ for a given $m$, that is $\xi_k = 0$ for $k \neq m$ and $\xi_m = 1$, and $\zeta = \xi$. In this case, $V^{\xi} \bar V^{\zeta} = |V_m|^2$, and we have (see \eqref{mason3})
\begin{align}
  \partial_t \langle |V_m|^2 \rangle_* = & \mathrm{Re}\, i \frac{\varepsilon}{N}  \sum_{m = k + \ell} \Psi_{nk\ell}^+ e^{ti (-\omega_m + \omega_k + \omega_\ell) -  r_{mk\ell} \delta t}\langle \widebar V_m V_k V_\ell \rangle_*  \nonumber \\
  &+ \mathrm{Re}\, 2i\frac{\varepsilon}{N}  \sum_{m = - k + \ell} e^{ti (-\omega_m - \omega_k + \omega_\ell) -  r_{mk\ell} \delta t} \Psi_{mk\ell}^+ \langle \widebar V_m \widebar{V}_k V_\ell\rangle_*.
\end{align}
The numbers $r_{mk\ell}$ in the exponentials can only take values $3$ and $5$. 
Indeed, 
in the right-hand side, we have only terms of the form $V^\mu\widebar V^\kappa$ with $|\mu- \kappa|^2 = 3$ or $5$. For the first sum, we  have terms of the form $\widebar V_m V_k V_\ell$ which can be written $V^\mu\widebar V^\kappa$ with 
$\mu = 1_k + 1_\ell$ and $\kappa = 1_m$. We thus have $\mu- \kappa = 1_k + 1_\ell - 1_m$, under the condition $m = k + \ell$. So we have
$|\mu- \kappa|^2 = 3$ except if   $m = k
$ or $m = \ell$ or $k = \ell = m/2$.  But the first relations imply that $k$ or $\ell = 0$, and thus $\Psi_{mk\ell}^+ = 0$ by definition of $\mathcal{D}_N^+$. In the case $k = \ell$, we have $r_{mjk} = 5$. Note also that $r_{mk\ell} = r_{m\ell k}$. The analysis for the second term is similar. 
By definition of $F_m^N$, see \eqref{eqFm}, we have $\partial_t \langle |V_m|^2 \rangle_* = \frac{1}{N^\er} \partial_t F_m^N(t)$, and this relation implies that  for $K \in \mathcal{G}_h^+$, 
\begin{eqnarray}
\label{init}
F_K^{N,h}(t)  &=& F_K^{N,h}(0)
+ \int_0^t  \mathrm{Re} \, \frac{i\varepsilon N^\er}{h^2 N^{3}}   \sum_{\substack{m = k + \ell\\ m \in \mathcal{C}_K^{N,h}}} \Psi_{mk\ell}^+ e^{i s (\omega_k + \omega_\ell - \omega_m)  - r_{mk\ell}\delta s }  \langle \widebar V_m V_k V_\ell \rangle_*(s) \dd s \nonumber\\
&&
+ \int_0^t \mathrm{Re} \,   \frac{2 i\varepsilon N^\er}{h^2 N^3}    \sum_{\substack{m = - k + \ell \\ m \in \mathcal{C}_K^{N,h}}} \Psi_{mk\ell}^+ e^{i s (-\omega_k + \omega_\ell - \omega_m)  - r_{mk\ell}\delta  s } \langle \widebar V_m \widebar{V}_k V_\ell \rangle_* (s) \, \dd s. 
\end{eqnarray}
Note that the right-hand side of this equation is of order $\mathcal{O}(\frac{\varepsilon}{h} )$ using \eqref{fluct2}. Indeed, we have for all $s$, 
\begin{eqnarray*}
e^{i s (\omega_k + \omega_\ell - \omega_m)  - r_{mk\ell} s }\langle \widebar V_m V_k V_\ell \rangle_*(s) &=& \langle \widebar V_m V_k V_\ell \rangle(s)
= \int \bar v_m v_k v_\ell \rho_N(s) \\
&=& \int \bar v_m v_k v_\ell (\rho_N(s) - \mu_N), 
\end{eqnarray*}
as $\bar v_m v_k v_\ell$ is of zero average with respect to $\mu_N$ (it contains three angles). 
Hence we have by using Cauchy-Schwarz inequality
\begin{multline}
\left|\frac{\varepsilon N^\er}{h^2 N^3}\sum_{\substack{m = k + \ell\\ m\in \mathcal{C}_K^{N,h}}} \Psi_{mk\ell}^+ \langle \widebar V_m V_k V_\ell \rangle(s)\right| = \frac{\varepsilon N^\er}{h^2 N^3} \left| \int \left(\sum_{\substack{m = k + \ell\\ m\in \mathcal{C}_K^{N,h}}} \Psi_{mk\ell}^+ \bar v_m v_k v_\ell \right)(\rho_N(s) - \mu_N) \right|\\
\leq \frac{\varepsilon N^\er}{h^2 N^3}\left( \int \frac{|\rho_N(s) - \mu_N|^2}{\mu_N}\right)^{1/2}
\left(\int \left|\sum_{\substack{m = k + \ell\\ m\in \mathcal{C}_K^{N,h}}} \Psi_{mk\ell}^+ \bar v_m v_k v_\ell \right|^2\mu_N\right)^{1/2}. \label{firstest}
\end{multline}
But using \eqref{fluct2}, this term is bounded by 
$$
\frac{C \varepsilon N^\er}{h^2N^3} \times \left( \frac{C_0}{N^{2 \er - 2}} \right)^{1/2} \times \left( \sum_{\substack{m = k + \ell\\ m' = k' + \ell' \\ m, m' \in \mathcal{C}_K^{N,h}}} \widebar \Psi_{m'k'\ell'}^+\Psi_{mk\ell}^+  \int \bar v_m v_{m'}  \bar v_{k'}v_k \bar v_{\ell'}v_\ell \mu_N\right)^{1/2}. 
$$
But the terms $\int \bar v_m v_{m'}  \bar v_{k'}v_k \bar v_{\ell'}v_\ell \mu_N = 0$ unless the triplet $(m',k,\ell)$ is equal to $(m,k',\ell')$. But  we cannot have $m = k$ otherwise $\ell = 0$ and the corresponding term $\Psi_{mk\ell}^+$ cancels by definition of $\mathcal{D}$. For the same reason, we cannot have $m = \ell$, $m' = k'$ or $m' = \ell'$. Hence, to obtain a non zero term, we must have 
$k = k'$ and $\ell = \ell'$, or $k = \ell'$ and $\ell = k'$ which implies in any case that $m = m'$. Hence the previous term is bounded by 
\begin{equation}
\label{secest}
\frac{C\varepsilon}{h^2N^2} \left( 2 \sum_{\substack{m = k + \ell\\ m\in \mathcal{C}_K^{N,h}}}  \int |v_m|^2 |v_k|^2 |v_\ell|^2\mu_N\right)^{1/2} \leq \frac{C\varepsilon}{h^2N^2} (C_1 h^2 N^4)^{1/2} \leq \frac{C_2 \varepsilon}{h},  
\end{equation}
for some constant $C_1$ and $C_2$. Note that to obtain this estimate, we have used the fact that $\int |v_m|^2 |v_k|^2 |v_\ell|^2\mu_N$ are quantities uniformly bounded in $m$, $k$ and $\ell$, and that 
$$
\mathrm{Card} \{ \, (m,k,\ell) \in \mathcal{D}_N^+\, |\, m = k + \ell\quad\mbox{and}\quad m \in \mathcal{C}_K^{N,h} \,\} \leq C_1 h^2 N^4
$$
for some constant $C_1$. In the following, $C$ will denote a generic constant independent of $N$, $\delta$, $\varepsilon$, $t$ and $h$ and are allowed to change along the estimates.

%By integrating by part \eqref{init}, and using estimates similar to the previous one and based on \eqref{fluct2}, we will obtain the following result: 

\begin{proposition} Under the hypothesis \eqref{hypinit}, there exists  constants $C$, $N_0$, $h_0$ and $\delta_0$  such that for all $N\geq N_0$, $h \leq h_0$, $\delta \leq \delta_0$ and for all $t \geq 0$ and all $K \in \mathcal{G}_h^+$,   
\begin{multline}
\label{prezakh}
F_K^{N,h}(t) = F_K^{N,h}(0) 
+ 2  \int_0^t   \frac{\varepsilon^2 N^\er}{h^2 N^4}  \sum_{ \substack{ m = j+p \\ m \in \mathcal{C}_K^{N,h}} }  
 \frac{3 \delta (\Psi_{mj p}^+)^2}{ (\omega_m - \omega_j - \omega_{p})^2  + 3^2 \delta^2 } \Big(  \langle | V_j |^2 | V_p|^2  \rangle - 2\langle | V_m|^2 | V_j|^2  \rangle\Big)(s) \dd s\\
%&  - 2 \int_0^t  \mathrm{Re} \frac{ \varepsilon^2}{ \eta N^4}  \sum_{\substack{m = p + j  \\ |K - m| < \eta }} V_{mpj}^2 \frac{3 \delta}{ (\omega_{p} + \omega_j - \omega_m)^2  + 3^2 \delta^2}    \langle | v_m|^2 | v_j|^2  \rangle(s) \dd s\\
+4 \int_0^t   \frac{\varepsilon^2 N^\er}{h^2 N^4}   \sum_{\substack{ m = -j+p \\m \in \mathcal{C}_K^{N,h}}  }  
\frac{3 \delta (\Psi_{mjp}^+)^2 }{ ( \omega_m +\omega_j - \omega_{p})^2  + 3^2 \delta^2}  \Big (\langle | V_j|^2| V_p |^2\rangle + \langle | V_m|^2| V_p |^2 \rangle - \langle | V_m|^2 | V_j|^2 \rangle\Big)(s)\dd s\\
%&+ 2\int_0^t  \mathrm{Re} \frac{  \varepsilon^2}{ \eta^2 N^4}  \sum_{\substack{m = - j + p \\ |K - m| < \eta} }  \frac{3 \delta V_{mjp}^2}{ (\omega_{m} + \omega_j - \omega_p)^2  + 3^2\delta^2}    \langle | v_m|^2| v_p |^2 \rangle(s) \dd s\\
%&- 4\int_0^t  \mathrm{Re} \frac{  \varepsilon^2}{ \eta^2 N^4}  \sum_{\substack{m =  - j+p \\ |K - m| < \eta}}  \frac{3 \delta V_{mpj}^2}{ (\omega_m + \omega_j - \omega_p)^2  + 3^2  \delta^2}  \langle | v_m|^2 | v_j|^2 \rangle(s) \dd s\\
+  R(\varepsilon, K,N,h,\delta,t) 
\end{multline}
with the estimate
\begin{equation}
\label{eq:rest}
\sup_{\substack{K \in \mathcal{G}_h^+ \\ N \geq N_0}} |R(\varepsilon, K,N,h,\delta)|\leq C \Big(\frac{ \varepsilon  }{h \delta} + \frac{\varepsilon^2 }{h \delta^2}+ \frac{t \varepsilon^3 }{ h \delta^2} + \frac{t \varepsilon^2 \delta}{N^{2 - \alpha}} \Big).
\end{equation}
\end{proposition}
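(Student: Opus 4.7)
The strategy is an iterated Duhamel expansion of the three-point correlators that already appear in \eqref{init}, followed by a splitting of the resulting four-point correlators into a random-phase-compatible leading part and a small remainder controlled by Hypothesis \ref{hypinit}.

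First, I would differentiate $\langle\widebar V_m V_k V_\ell\rangle(s)$ (for the first sum of \eqref{init}) using the equation \eqref{mason3}. The linear and stochastic pieces produce the decay/phase factor $i\Omega_1 - 3\delta$ with $\Omega_1 = \omega_k+\omega_\ell-\omega_m$, while the nonlinearity yields four-point correlators via the two convolution sums $\sum_{\cdot = \cdot + \cdot}$ and $\sum_{\cdot = -\cdot + \cdot}$. Among these, only the pairings that match the orthogonality relation \eqref{orthog} survive with nonzero contribution against $\mu_N$. Enumerating them carefully gives, for $m=k+\ell$ with $m,k,\ell$ distinct, exactly three surviving leading monomials, $\langle|V_k|^2|V_\ell|^2\rangle$, $\langle|V_m|^2|V_k|^2\rangle$ and $\langle|V_m|^2|V_\ell|^2\rangle$, each with a coefficient of the form $\pm\,2i\varepsilon N^{-1}\Psi^+_{mk\ell}$. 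After using the symmetry $k\leftrightarrow\ell$ in the sum $\sum_{m=k+\ell}$ these combine into the structure $\langle|V_j|^2|V_p|^2\rangle-2\langle|V_m|^2|V_j|^2\rangle$ displayed in the proposition. The same manipulation carried out on $\langle\widebar V_m\widebar V_k V_\ell\rangle$ produces the analogous combination with signs $(+,+,-)$ corresponding to the second sum.

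Second, plugging the Duhamel identity
\[
\langle\widebar V_m V_k V_\ell\rangle(s)=e^{(i\Omega_1-3\delta)s}\langle\widebar V_m V_k V_\ell\rangle(0)+\tfrac{2i\varepsilon}{N}\Psi^+_{mk\ell}\int_0^s e^{(i\Omega_1-3\delta)(s-\tau)}G(\tau)\,\dd\tau+\mathrm{rem}(s)
\]
into \eqref{init} and swapping the order of integration, the $s$-integral turns into $\int_0^t G(\tau)\frac{e^{(i\Omega_1-3\delta)(t-\tau)}-1}{i\Omega_1-3\delta}\dd\tau$. The constant $-1/(i\Omega_1-3\delta)$, combined with the overall factor $i\varepsilon\cdot(2i\varepsilon\Psi^+/N)$ and the outer $\mathrm{Re}$, produces the Lorentzian kernel $3\delta/((\omega_m-\omega_j-\omega_p)^2+9\delta^2)$, which is precisely the prefactor appearing in the proposition; the time-dependent exponential piece $e^{(i\Omega_1-3\delta)(t-\tau)}/(i\Omega_1-3\delta)$ is bounded by $1/\delta$ and after $\tau$-integration contributes $\mathcal{O}(\varepsilon^2/(h\delta^2))$ to the remainder $R$.

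Third, I would gather the four sources of error into $R$: (a) the initial-data piece $\int_0^t e^{(i\Omega_1-3\delta)s}\langle\widebar V_m V_k V_\ell\rangle(0)\dd s$ is bounded in exactly the same way as \eqref{firstest}--\eqref{secest}, using Proposition \ref{prop2}, the extra $1/(3\delta)$ from the time integral producing the term $\varepsilon/(h\delta)$; (b) the off-diagonal four-point contributions with $\mu\neq\kappa$ are estimated by rewriting $\sum A_{mk\ell}\langle V^\mu\widebar V^\kappa\rangle=\int(\sum A_{mk\ell}v^\mu\bar v^\kappa)(\rho_N(\tau)-\mu_N)$, applying Cauchy--Schwarz with \eqref{fluct2}, and using the near-orthogonality of Gaussian monomials under $\mu_N$ to gain a square root in the number of terms; together with the additional factor $\varepsilon$ coming from the nonlinearity this gives the $t\varepsilon^3/(h\delta^2)$ term; (c) the exponentially decaying boundary term from step two yields $\varepsilon^2/(h\delta^2)$; (d) the exceptional resonance $r_{mk\ell}=5$, corresponding to the codimension-one subset $k=\ell=m/2$, is summed on a one-dimensional slice and after integration against the Lorentzian kernel gives the $t\varepsilon^2\delta/N^{2-\alpha}$ contribution. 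Adding the analogous contributions from the second Duhamel expansion, one arrives at \eqref{prezakh} with the stated bound \eqref{eq:rest}.

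\emph{Main obstacle.} The delicate point is step three\,(b): the naive pointwise estimate $|\langle V^\mu\widebar V^\kappa\rangle|\leq C/N^{\alpha-1}$ multiplied by the $\mathcal{O}(h^2 N^4)$ cardinality of the sum would be far too crude. One has to exploit the $\mu_N$-orthogonality of distinct monomials $v^\mu\bar v^\kappa$ to obtain a square-root gain in the counting, and then combine this with the co-area decomposition of the Lorentzian kernel; the careful bookkeeping of which pairings are compatible with both the convolution constraint $m = k+\ell$ (resp.~$m=-k+\ell$) and the random-phase constraint $\mu=\kappa$ is what drives the final form of the error.
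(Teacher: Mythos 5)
Your overall strategy — iterated Duhamel expansion of the three-point correlators in \eqref{init}, identification of the diagonal (random-phase-surviving) four-point monomials as the leading Lorentzian term, and control of the rest via the orthogonality \eqref{orthog} together with the propagated bound \eqref{fluct2} — is the same as the paper's, which implements the Duhamel iteration as repeated integration by parts in time. However, two steps of your error accounting do not close as written.

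First, in your step two you claim that the piece $\frac{1}{i\Omega_1-3\delta}\int_0^t G(\tau)e^{(i\Omega_1-3\delta)(t-\tau)}\,\dd\tau$ contributes only $\mathcal{O}(\varepsilon^2/(h\delta^2))$. This is false for the \emph{diagonal} four-point correlators $G=\langle|V_k|^2|V_\ell|^2\rangle$, which are $\mathcal{O}(1)$ and have no reason to be small against $\mu_N$: the naive bound $|G|\cdot\delta^{-2}$ summed over the $\mathcal{O}(h^2N^4)$ indices with the prefactor $\varepsilon^2N^{\er}h^{-2}N^{-4}$ gives $\mathcal{O}(\varepsilon^2 N^{\er}/\delta^2)$, which diverges with $N$. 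The large part of this term only disappears through the exact cancellation $-\gamma_k\gamma_\ell+\gamma_m\gamma_\ell+\gamma_m\gamma_k=0$ (Rayleigh--Jeans stationarity), which you never invoke at this stage and which the paper defers to the linearization step. The paper avoids the issue entirely by grouping the boundary contribution as $\big(\langle\widebar V_mV_kV_\ell\rangle(t)-\langle\widebar V_mV_kV_\ell\rangle(0)\big)/(\Omega_1+ir_{mk\ell}\delta)$, a \emph{three}-point correlator with zero $\mu_N$-average, so that Cauchy--Schwarz with \eqref{fluct2} gives the $\varepsilon/(h\delta)$ term directly (estimate \eqref{boundterm}). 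Your finer splitting destroys this cancellation, and each piece must then be resummed or treated with the stationarity identity to recover the bound.

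Second, your step three (b) estimates the off-diagonal four-point contributions by a single Cauchy--Schwarz against $\rho_N(\tau)-\mu_N$. Carrying out that count (prefactor $\varepsilon^2N^{\er}h^{-2}N^{-4}\delta^{-1}$, factor $N^{1-\er}$ from \eqref{fluct2}, square-root of the $\mathcal{O}(h^2N^6)$ cardinality) yields $t\varepsilon^2/(h\delta)$, not $t\varepsilon^3/(h\delta^2)$; over the relevant time scale $t\sim T/\varepsilon^2$ the former is $\mathcal{O}(1/(h\delta))$ and does not vanish. The extra factor $\varepsilon/\delta$ you allude to ("the additional factor $\varepsilon$ coming from the nonlinearity") cannot appear without a \emph{second} integration by parts: since these terms all carry a phase with $|\mu-\kappa|^2=r\geq 1$, the denominators $\Omega+ir\delta$ are bounded below by $\delta$, and iterating produces five-point correlators (genuinely of zero $\mu_N$-average, hence controlled by \eqref{fluct2} as in \eqref{h1}--\eqref{bjl2}, giving $t\varepsilon^3/(h\delta^2)$) together with new boundary terms of size $\varepsilon^2/(h\delta^2)$ as in \eqref{h1b}. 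As written, your argument skips this second iteration, and the stated remainder bound \eqref{eq:rest} does not follow. Your treatment of the exceptional diagonal $k=\ell=m/2$ via the non-resonance \eqref{orage1} and of the initial-data term is correct.
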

\begin{proof}
Let us consider the first term in the right-hand side of \eqref{init}. After integration by parts, we can write it 
\begin{multline}
\label{beurk}
\int_0^t  \mathrm{Re} \, \frac{i\varepsilon N^\er }{h^2 N^3}  \sum_{\substack{m = k + \ell\\ m \in \mathcal{C}_K^{N,h}} } \Psi_{mk\ell}^+ e^{i s (\omega_k + \omega_\ell - \omega_m)  - r_{mk\ell} s \delta}  \langle \widebar V_m V_k V_\ell \rangle_*(s) \dd s \\
%= \int_0^t  \mathrm{Re} \frac{\varepsilon i }{ N}  \sum_{\substack{m = k + \ell \\ |K-m| < \eta}} V_{mk\ell} \frac{\dd}{\dd t} \frac{e^{i s (\omega_k + \omega_\ell - \omega_m)  - 3^2 s \lambda }}{i  (\omega_k + \omega_\ell - \omega_m)  - 3^2  \lambda}  \langle \bar v_m v_k v_\ell \rangle_*(s) \dd s
%\\
%= \int_0^t  \mathrm{Re} \frac{ \varepsilon}{ N}  \sum_{m = k + \ell} V_{mk\ell} \frac{\dd}{\dd t} \frac{e^{\frac{i s (\omega_k + \omega_\ell - \omega_m)  - 3^2 s \lambda }{\varepsilon^2}}}{ (\omega_k + \omega_\ell - \omega_m)  + 3^2 i \lambda}  \langle \bar v_m v_k v_\ell \rangle_*(s) \dd s
%\\
= - \int_0^t  \mathrm{Re}\, \frac{\varepsilon N^\er}{h^2  N^3 }\sum_{\substack{m = k + \ell\\ m \in \mathcal{C}_K^{N,h}} } \Psi_{mk\ell}^+ \frac{e^{i s (\omega_k + \omega_\ell - \omega_m)  - r_{mk\ell} s \delta }}{ (\omega_k + \omega_\ell - \omega_m)  + i r_{mk\ell}  \delta}  \frac{\dd}{\dd t}  \langle \widebar V_m V_k V_\ell \rangle_*(s) \dd s\\
+ \left[ 
 \mathrm{Re} \, \frac{\varepsilon N^\er}{h^2 N^3}  \sum_{\substack{m = k + \ell\\ m \in \mathcal{C}_K^{N,h}}  } \Psi_{mk\ell}^+ \frac{e^{i s (\omega_k + \omega_\ell - \omega_m)  - r_{mk\ell} s \delta }}{ (\omega_k + \omega_\ell - \omega_m)  + i r_{mk\ell}  \delta}   \langle \widebar V_m V_k V_\ell \rangle_*(s) 
 \right]_0^t. 
\end{multline}
Let us look at the boundary term. By estimating like in \eqref{firstest}-\eqref{secest} and using 
\begin{equation}
\label{bounddelta}
\left| \frac{1}{(\omega_k + \omega_\ell - \omega_m)  + r_{mk\ell} i \delta}\right|^2  = \frac{1}{(\omega_k + \omega_\ell - \omega_m)^2  + r_{mk\ell}^2 \delta^2} \leq \frac{C}{ \delta^2}, 
\end{equation}
we easily obtain that 
\begin{equation}
\label{boundterm}
\left|\frac{\varepsilon N^\er}{h^2 N^3}  \sum_{\substack{m = k + \ell\\ m \in \mathcal{C}_K^{N,h}} } \Psi_{mk\ell}^+ \frac{\langle \widebar V_m V_k V_\ell \rangle(s)}{ (\omega_k + \omega_\ell - \omega_m)  + i r_{mk\ell}  \delta}    \right| \leq \frac{C\varepsilon }{ h \delta}
\end{equation}
contributing to the first term in the estimate \eqref{eq:rest}.  
Now we calculate the contribution to the time integral  term in the right-hand side of \eqref{beurk}. 
We compute 
\begin{align}
\label{beurk2}
&- \int_0^t  \mathrm{Re} \,  \frac{\varepsilon N^\er}{h^2 N^3}  \sum_{\substack{m = k + \ell\\ m \in \mathcal{C}_K^{N,h}} } \Psi_{mk\ell}^+ \frac{e^{i s (\omega_k + \omega_\ell - \omega_m)  - r_{mk\ell} s \delta}}{ (\omega_k + \omega_\ell - \omega_m)  +  ir_{mk\ell} \delta}  \frac{\dd}{\dd t}  \langle \widebar V_m V_k V_\ell \rangle_*(s) \dd s\\
&\nonumber =  \int_0^t  \mathrm{Re}\,   \frac{i\varepsilon^2 N^\er}{h^2 N^4}  \sum_{\substack{m = k + \ell \\ m = j + p \\m \in \mathcal{C}_K^{N,h}\\| 1_j + 1_p - 1_k - 1_\ell|^2 = r }  
}\Psi_{mk\ell}^+ \Psi_{m j p}^+\frac{e^{i s (\omega_k + \omega_\ell - \omega_j - \omega_p)  - r s \delta}}{ (\omega_k + \omega_\ell - \omega_{m})  +  ir_{mk\ell} \delta}  \langle \widebar V_j \widebar V_p  V_k V_\ell \rangle_*(s) \dd s\\
& \nonumber - 2 \int_0^t  \mathrm{Re} \, \frac{i\varepsilon^2 N^\er}{h^2 N^4}  \sum_{\substack{m = k  + \ell \\ k = j + p \\ m \in \mathcal{C}_K^{N,h}\\| 1_m - 1_j - 1_p - 1_\ell|^2 = r}} \Psi_{m k\ell}^+ \Psi_{k jp}^+ \frac{e^{i s (\omega_p + \omega_j + \omega_\ell - \omega_m)  - r s \delta}}{ (\omega_{k} + \omega_\ell - \omega_m)  +  ir_{mk\ell} \delta}    \langle \widebar V_m V_j V_p  V_\ell \rangle_*(s) \dd s \\
& \nonumber + 2 \int_0^t  \mathrm{Re} \,\frac{i\varepsilon^2 N^\er}{h^2 N^4} \sum_{\substack{m = k + \ell \\ m = -j + p \\m \in \mathcal{C}_K^{N,h}\\ | 1_p - 1_{j} - 1_k - 1_\ell|^2 = r}  
}\Psi_{mk\ell}^+ \Psi_{m j p}^+ \frac{e^{i s (\omega_k + \omega_\ell + \omega_j - \omega_p)  - r s \delta}}{ (\omega_k + \omega_\ell - \omega_{m})  +  i r_{mk\ell}\delta}  \langle  V_j \widebar V_p  V_k V_\ell \rangle_*(s) \dd s\\
&  \nonumber- 4 \int_0^t  \mathrm{Re}\,  \frac{i\varepsilon^2 N^\er}{h^2 N^4}  \sum_{\substack{m = k  + \ell \\ k = -j + p \\ m \in \mathcal{C}_K^{N,h}\\ |1_m + 1_{j} - 1_p - 1_\ell|^2 = r}} \Psi_{mk\ell}^+ \Psi_{k j p}^+\frac{e^{i s (\omega_p - \omega_j + \omega_\ell - \omega_m)  - r s \delta }}{ (\omega_{k} + \omega_\ell - \omega_m)  +  ir_{mk\ell} \delta}    \langle \widebar V_m \widebar V_j V_p  V_\ell \rangle_*(s) \dd s. 
\end{align}
Let us isolate the terms for which $ r = 0$, which means the terms which depends only on the $|V_k|^2$. Note that such terms can only be present in  the first and fourth terms in the right-hand side of the previous equation. 
Owing to the fact that the term $\Psi_{mk\ell}^+ = 0$ whenever one index is $0$ and is symmetric with respect to $mk\ell$, these terms are equal to 
\begin{eqnarray*}
&&  2 \int_0^t  \mathrm{Re}\,  \frac{ i\varepsilon^2 N^\er}{h^2 N^4}  \sum_{\substack{m = k +\ell\\ m \in \mathcal{C}_K^{N,h}}}  (\Psi_{mk\ell}^+)^2 \frac{1}{ (\omega_k + \omega_\ell - \omega_{m})  +  i r_{mk\ell} \delta }  \langle | V_k |^2 | V_\ell|^2  \rangle(s) \dd s\\
&& \nonumber - 4 \int_0^t  \mathrm{Re}\, \frac{ i\varepsilon^2 N^\er}{h^2 N^4}   \sum_{\substack{m = k + \ell \\ m \in \mathcal{C}_K^{N,h}} } (\Psi_{mk \ell}^+)^2 \frac{1}{ (\omega_{k} + \omega_\ell - \omega_m)  + r_{m k \ell} i \delta}    \langle | V_m|^2 | V_\ell|^2  \rangle(s) \dd s. 
\end{eqnarray*}
These terms can be written 
\begin{eqnarray}
&&  \label{popo}2 \int_0^t  \mathrm{Re}\,  \frac{ i\varepsilon^2 N^\er}{h^2 N^4}  \sum_{\substack{m = k +\ell\\ m \in \mathcal{C}_K^{N,h}}}   \frac{r_{mk\ell} \delta (\Psi_{mk\ell}^+)^2}{ (\omega_k + \omega_\ell - \omega_{m})^2  +   r_{mk\ell}^2 \delta^2 }  \langle | V_k |^2 | V_\ell|^2  \rangle(s) \dd s\\
&& \nonumber - 4 \int_0^t  \mathrm{Re}\, \frac{ i\varepsilon^2 N^\er}{h^2 N^4}   \sum_{\substack{m = k + \ell \\ m \in \mathcal{C}_K^{N,h}} } \frac{r_{mk\ell} \delta (\Psi_{mk \ell}^+)^2 }{ (\omega_{k} + \omega_\ell - \omega_m)^2  + r_{m k \ell}^2  \delta^2}    \langle | V_m|^2 | V_\ell|^2  \rangle(s) \dd s. 
\end{eqnarray}
To obtain \eqref{prezakh}, we just need to see that we can replace the $r_{mk\ell}$ to $3$ when they are equal to $5$, that is in the situation where $k = \ell = m/2$. The corresponding terms for the first sum in \eqref{popo} is 
$$
\int_0^t  \mathrm{Re}\,  \frac{ i\varepsilon^2 N^\er}{h^2 N^4}  \sum_{ m \in \mathcal{C}_K^{N,h}}   \frac{5(\Psi_{m\frac{m}{2}\frac{m}{2}}^+)^2}{ (2 \omega_{\frac{m}{2}} - \omega_{m})^2  +   5 \delta^2 }  \langle | V_\frac{m}{2} |^4  \rangle(s) \dd s. 
$$
For such terms, we have for $m_x > 0$, 
\begin{equation}
\label{orage1}
\omega_m - 2 \omega_{\frac{m}{2}} = m_x^3 + \frac{m_y^2}{m_x} - \frac{2}{8} m_x^3 -  \frac{m_y^2}{m_x} = \frac{3}{4}m_x > a
\end{equation}
for some constant $a$, 
by definition of $\mathcal{D}^+$. Hence for these terms, we can use the bound
\begin{equation}
\label{orage2}
\frac{5 \delta }{ (\omega_m - 2\omega_{\frac{m}{2}})^2  + 5^2 \delta^2 }\leq  C \delta , 
\end{equation}
provided that $\delta \leq \delta_0$ is small enough. 
Moreover, we can bound the terms 
$$
\Big|\langle | V_{\frac{m}{2}} |^4 \rangle \Big| \leq 
\left(\int  \frac{\rho_N^2} {\mu_N}\right)^{\frac12} \left(
 \int   | v_{\frac{m}{2}} |^4   \mu_N \right)^{\frac12} \leq C
$$ 
by using \eqref{fluct2} with $\er \geq 1$ and \eqref{eq:mkp}. 
We deduce that 
\begin{equation}
\label{eqnlinearization}
\left| \frac{\varepsilon^2 N^\er}{h^2N^4}
\sum_{ m \in \mathcal{C}_K^{N,h}}  
 \frac{5 \delta (\Psi_{m \frac{m}{2} \frac{m}{2} })^2}{ (\omega_m - 2\omega_\frac{m}{2} )^2  + 5^2 \delta^2 }  \langle | V_{\frac{m}{2}} |^4 \rangle 
 \right| \leq \frac{C \delta \varepsilon^2 N^\er}{h^2 N^4} \left(\sum_{ m \in \mathcal{C}_K^{N,h}} 1 \right)  \leq C \frac{\varepsilon^2 \delta}{N^{2 - \alpha}}. 
\end{equation}
Hence, up to a term which is less than the 
the last contribution in \eqref{eq:rest}, we see that we can set $r_{mk\ell}$ to $3$ everywhere in \eqref{popo}
which yields the first contribution in Equation \eqref{prezakh}.  \\
%Now to deal with the other term let us first check those which are non zero. In the integrals, the right-hand sides are always made of sum of monomials with four terms of the form $\bar v_k v_j v_\ell v_p$ or $\bar v_k \bar v_j v_\ell v_p$, up to complex conjugate. If all the indices $(k,j,\ell,p)$ are different, there are four independent angles and we check easily that we must have $r = 4$. If this is not the case, several situations can occur: 
%\begin{itemize}
%\item Either there is a reconnection, i.e. one $\bar v_k$ and $v_\ell$ share the same index. For terms of the form $\bar v_k v_j v_\ell v_p$ appearing in the second and third term, we are in a situation where $k = j$, $\ell$ or $p$. But as the sum is taken over the indices satisfying the condition $k = j + \ell + p$, this forces the sum of the other two indices to be equal to $0$, which is impossible by definition of $\mathcal{D}_N^+$. For terms of the form $\bar v_k \bar v_j v_\ell v_p$, this forces the other to remaining term to reconnect, and the term is in fact a product of the form $|v_k|^2 |v_j|^2$ for which $r = 0$. 
%\item Either 
%\end{itemize}  First $r = 2$ or $4$. Moreover, if $r = 2$ in one of the previous terms, we must have a reconnection. In the first and fourth term, one reconnection imply a global reconnection. In the second and third term, a reconnection imlies that the sum over the remaining indices is is zero, which is not possible, by definition of $\mathcal{D}_+$. Hence $r = 4$ is the only possibility in the second term. 
%So except the two term above, all the terms have $4$ angles. 

\medskip 

Going back to \eqref{beurk2}, 
we see that the other terms for which $r \geq 1$ can be integrated by part again. 
After scaling back to the original moments $\langle v^{\xi} \bar v^{\zeta} \rangle(t) = e^{i (\xi - \zeta) \cdot \omega - t \delta |\xi - \zeta|^2 }  \langle v^{\xi} \bar v^{\zeta} \rangle_*(t)$,  the new time integrals obtained are of the following types (up to complex conjugate): 
\begin{equation}
\label{h1}
\int_0^t \frac{\varepsilon^3 N^\er}{h^2 N^5} \sum_{(p,j,k,\ell,q) \in  \mathcal{A}_K^{N,h}} W_{pqjk\ell}(\delta)   \langle \widebar V_{p}
V_j V_k V_\ell V_q \rangle (s) \dd s
\end{equation}
or
\begin{equation}
\label{h2}
 \int_0^t \frac{\varepsilon^3 N^\er}{ h^2 N^5} \sum_{(p,j,k,\ell,q) \in  \mathcal{A}_K^{N,h}} W_{pqjk\ell}(\delta)   \langle \widebar V_{p}
\widebar V_j V_k V_\ell V_q \rangle(s)  \dd s
\end{equation}
where $\mathcal{A}_{K}^{N,h}$ are sets of multindex of cardinal $\mathcal{O}(h^2N^8)$ (typically for the first term, sets of multi-indices with constraints of the form $p = j + k + \ell + q$ with $p \in \mathcal{C}_K^{N,h}$) , and where 
\begin{equation}
\label{bjl}
\sup_{pqjk\ell}|W_{pqjk\ell}(\delta)| \leq \frac{C}{\delta^2}. 
\end{equation}
Note that in this case, as we have always products of five $V_k$ and $\widebar V_\ell$, we cannot have a term depending only on the $|V_k|^2$, and hence the terms are or zero average with respect to the density $\mu_N$. Hence, the integrand term \eqref{h1} for example is equal to 
$$
\int \frac{\varepsilon^3 N^\er}{h^2N^5} \sum_{(p,j,k,\ell,q) \in  \mathcal{A}_m^N} W_{pqjk\ell}(\delta)  \widebar v_{p}
v_j v_k v_\ell v_q (\rho_N(s) - \mu_N)
$$
And we can bound it in modulus by 
$$
 \frac{\varepsilon^3 N^\er}{ h^2N^5} 
\left( \int  \frac{|\rho_N(s) - \mu_N|^2}{\mu_N}\right)^{1/2}
\left( \int \left| \sum_{(p,j,k,\ell,q) \in  \mathcal{A}_K^{N,h} } W_{pjk\ell m}(s,\delta)  \bar v_{p}
v_j v_k v_\ell v_m
\right|^2  \mu_N\right)^{1/2}
$$
and using \eqref{orthog}, \eqref{bjl} and \eqref{fluct2} this term will be smaller than 
$$
\frac{C \varepsilon^3}{h^2 \delta^2 N^4}\left( \sum_{\substack{ (p,j,k,\ell,q) \in  \mathcal{A}_K^{N,h} \\ (p',j',k',\ell',q') \in  \mathcal{A}_K^{N,h}   \\ 1_p + 1_{j'} + 1_{k'} + 1_{\ell'} = 1_{p'} + 1_j + 1_k + 1_\ell}} \int| v_{p}
v_j v_k v_\ell v_m|^2
 \mu_N  \right)^{1/2}. 
$$
In this sum, the integral with respect to $\mu_N$ are uniformly bounded with respect to $N$. Hence the size of this term is essentially the number of terms in the sum. 
To count this number, we observe that once the multindex $(p,j,k,\ell,m)$ is fixed, there is only a finite (and independent of $N$) number of choice for $(p',j',k',\ell',m')$ fulfilling  the orthogonality condition $-1_{p'} + 1_{j'} + 1_{k'} + 1_{\ell'} = -1_{p} + 1_j + 1_k + 1_\ell$.  
Hence the term in the sum is bounded by a universal constant times the cardinal of $\mathcal{A}_K^{N,h}$ which is of order $\mathcal{O}(h^2N^8)$. Hence we have
\begin{equation}
\label{bjl2}
| \eqref{h1}| + |\eqref{h2}| \leq C \frac{t \varepsilon^3 }{h \delta^2}, 
\end{equation}
contributing to the last term in \eqref{eq:rest}. 

Now companion to the integral terms \eqref{h1} and \eqref{h2}, the integration by parts yields boundary terms of the form 
\begin{equation}
\label{h1b}
\frac{\varepsilon^2 N^\er}{h^2 N^4} \sum_{(p,j,k,\ell) \in \mathcal{B}_K^{N,h}} W_{pjk\ell}(\delta)   \langle \widebar V_{p}
V_j V_k V_\ell \rangle (s) \quad\mbox{and}\quad 
\frac{\varepsilon^2 N^\er}{h^2 N^4}\sum_{(p,j,k,\ell) \in \mathcal{B}_K^{N,h}} W_{pjk\ell}(\delta)   \langle \widebar V_{p}
\widebar V_j V_k V_\ell \rangle(s) ,
\end{equation}
where $ |W_{pjk\ell}(\delta)| \leq C \delta^{-2}$, and where $\mathcal{B}_K^{N,h}$ are sets of multindices of cardinal $|\mathcal{B}_K^{N,h}|= \mathcal{O}(h^2N^6)$. Moreover in the second term the sum is taken over indices such that $1_p + 1_j - 1_j - 1_\ell \neq 0$ -the other having contributed to the ``kinetic" part of \eqref{prezakh}-  so that there is no term depending only on the $|V_k|^2$, and hence the term is of zero average (which is obvious for the first term). By using similar techniques as before, we can bound these terms by 
$$
\frac{C \varepsilon^2 N^\er}{h^2 N^4 \delta^2} \left(  \frac{C_0}{N^{2\alpha - 2}}  \right)^{1/2} \left( C h^2 N^6   \right)^{1/2} \leq C \frac{\varepsilon^2 }{h \delta^2}, 
$$
contributing to the second term in \eqref{eq:rest}. 

Now we repeat the procedure for the second term in the right-hand side of \eqref{init}. We find 
\begin{multline*}
\int_0^t  \mathrm{Re} \, 2 \frac{i\varepsilon N^\er}{h^2 N^3} \sum_{\substack{m = -k + \ell \\ m \in \mathcal{C}_K^{N,h}} } \Psi_{mk\ell}^+ e^{i s (-\omega_k + \omega_\ell - \omega_m)  - r_{mk\ell} s \delta}  \langle \widebar V_m \widebar V_k V_\ell \rangle_*(s) \dd s \\
%= \int_0^t  \mathrm{Re} \frac{\varepsilon i }{ N}  \sum_{\substack{m = k + \ell \\ |K-m| < \eta}} V_{mk\ell} \frac{\dd}{\dd t} \frac{e^{i s (\omega_k + \omega_\ell - \omega_m)  - 3^2 s \lambda }}{i  (\omega_k + \omega_\ell - \omega_m)  - 3^2  \lambda}  \langle \bar v_m v_k v_\ell \rangle_*(s) \dd s
%\\
%= \int_0^t  \mathrm{Re} \frac{ \varepsilon}{ N}  \sum_{m = k + \ell} V_{mk\ell} \frac{\dd}{\dd t} \frac{e^{\frac{i s (\omega_k + \omega_\ell - \omega_m)  - 3^2 s \lambda }{\varepsilon^2}}}{ (\omega_k + \omega_\ell - \omega_m)  + 3^2 i \lambda}  \langle \bar v_m v_k v_\ell \rangle_*(s) \dd s
%\\
= - 2\int_0^t  \mathrm{Re} \, \frac{\varepsilon N^\er }{h^2 N^3}  \sum_{\substack{m = -k + \ell \\ m \in \mathcal{C}_K^{N,h}} } \Psi_{mk\ell}^+ \frac{e^{i s (-\omega_k + \omega_\ell - \omega_m)  - r_{mk\ell} s \delta }}{ (-\omega_k + \omega_\ell - \omega_m)  +  i r_{mk\ell}\delta}  \frac{\dd}{\dd t}  \langle \widebar V_m \bar V_k V_\ell \rangle_*(s) \dd s\\
+ 2\left[ 
 \mathrm{Re} \, \frac{\varepsilon N^\er}{h^2 N^3} \sum_{\substack{m = -k + \ell \\ m \in \mathcal{C}_K^{N,h}}  } \Psi_{mk\ell}^+ \frac{e^{i s (-\omega_k + \omega_\ell - \omega_m)  - r_{mk\ell} s \delta }}{ (-\omega_k + \omega_\ell - \omega_m)  +  i r_{mk\ell}\delta}   \langle \widebar V_m \widebar V_k V_\ell \rangle_*(s) 
 \right]_0^t. 
\end{multline*}
The boundary term can be estimated as for Equation \eqref{beurk}. For the integral term, we calculate as in \eqref{beurk2} and we isolate only the term symmetric in $(V,\bar V)$ (the other terms being estimated by integration by part as previously), and we obtain 
\begin{align*}
&- 2\int_0^t  \mathrm{Re} \,  \frac{\varepsilon N^\er}{h^2 N^3}  \sum_{\substack{m = -k + \ell\\ m \in \mathcal{C}_K^{N,h}} } \Psi_{mk\ell}^+ \frac{e^{i s (-\omega_k + \omega_\ell - \omega_m)  - r_{mk\ell} s \delta}}{ (-\omega_k + \omega_\ell - \omega_m)  +  ir_{mk\ell} \delta}  \frac{\dd}{\dd t}  \langle \widebar V_m \widebar V_k V_\ell \rangle_*(s) \dd s\\
& = 4 \int_0^t  \mathrm{Re}\,  i\frac{\varepsilon^2 N^\er}{h^2 N^4} \sum_{\substack{m = -k + \ell  \\ m = -j +p \\ m \in \mathcal{C}_K^{N,h}\\ | 1_p - 1_{j} + 1_k - 1_\ell|^2 = r}  
}\Psi_{mk\ell}^+ \Psi_{m j p}^+ \frac{e^{i s (-\omega_k + \omega_\ell + \omega_j - \omega_p)  - r s \lambda}}{ (-\omega_k + \omega_\ell - \omega_{m})  +  ir_{mk\ell} \delta}  \langle  V_j \widebar V_p  \widebar V_k V_\ell \rangle_*(s) \dd s\\
&+ 4\int_0^t  \mathrm{Re}\,  i \frac{\varepsilon^2 N^\er }{h^2 N^4} \sum_{\substack{m = -k + \ell \\ k = -j + p \\  m \in \mathcal{C}_K^{N,h} \\  | -1_m + 1_{j} - 1_p + 1_\ell|^2 = r}} \Psi_{mk\ell}^+ \Psi_{k j p}^+\frac{e^{i s (\omega_j - \omega_p + \omega_\ell - \omega_m)  - r s \delta}}{ (-\omega_{k} + \omega_\ell - \omega_m)  +  i r_{mk\ell}\delta}    \langle \widebar V_m V_j \widebar V_p V_\ell \rangle_*(s) \dd s\\
&- 2\int_0^t  \mathrm{Re} \, i \frac{\varepsilon^2 N^\er}{h^2 N^4} \sum_{\substack{m = -k + \ell \\ \ell = j + p \\ m \in \mathcal{C}_K^{N,h}\\ | -1_m + 1_{j} + 1_p - 1_k|^2 = r}} \Psi_{mk\ell}^+\Psi_{\ell j p}^+ \frac{e^{i s (-\omega_k + \omega_j + \omega_p - \omega_m)  - r s \delta}}{ (-\omega_k + \omega_{\ell} - \omega_m)  +  ir_{mk\ell} \delta}    \langle \widebar V_m \widebar V_k V_j V_p \rangle_*(s) \dd s\\
&+ \mathcal{O}( \frac{\varepsilon^2 }{h\delta^2} + \frac{t \varepsilon^3}{h\delta^2}). 
\end{align*}
The terms for which $r = 0$ are equal to 
\begin{align*}
&4 \int_0^t  \mathrm{Re} \, i \frac{\varepsilon^2 N^\er}{h^2 N^4}  \sum_{\substack{m = -p + j \\ m \in \mathcal{C}_K^{N,h}}}  (\Psi_{mjp}^+)^2 \frac{1}{ (-\omega_j + \omega_p - \omega_{m})  + i r_{mjp}  \delta}  \langle | V_j V_p |^2 \rangle(s) \dd s\\
&+ 4\int_0^t  \mathrm{Re} \,  i \frac{\varepsilon^2 N^\er}{h^2 N^4}  \sum_{\substack{m = -j + p  \\ m \in \mathcal{C}_K^{N,h}} } (\Psi_{m j p}^+)^2 \frac{1}{ (-\omega_{j} + \omega_p - \omega_m)  +  i r_{mjp} \delta}    \langle | V_m V_p |^2 \rangle(s) \dd s\\
&- 4\int_0^t  \mathrm{Re} \, i \frac{\varepsilon^2 N^\er}{h^2 N^4} \sum_{\substack{m = -j + p \\ m \in \mathcal{C}_K^{N,h}}} (\Psi_{mjp}^+)^2 \frac{1}{ (-\omega_j + \omega_{p} - \omega_m)  + r_{mjp} i \delta}  \langle | V_m V_j|^2 \rangle(s) \dd s. 
\end{align*}
These terms give the second contribution in \eqref{prezakh} up to estimate of the form \eqref{eqnlinearization}. 
The other term for which $r > 1$ can be integrated by part again and estimated as previously. 
\end{proof}
We can write the previous result in the original formulation with frequencies in $\mathcal{D}_N$ and not only in $\mathcal{D}^+_N$. Recall that $V_j$ can be extended for $j \in \mathcal{D}$ by the relation $\widebar V_{j}(t) = V_{-j}(t)$. The fluctuation terms $F_m^N(t)$ defined by \eqref{eqFm} in $\mathcal{D}_N$ satisfy $F_{-m}^N(t) = F_m^N(t)$, and a similar relation holds for the coarse-grained fluctuations $F_K^{N,h}(t) = F_{-K}^{N,h}(t)$. The previous result can be written 
\begin{proposition} Under Hypothesis \ref{hypinit}, there exist constants $C$, $N_0$, $h_0$ and $\delta_0$  such that for all $N\geq N_0$, $h \leq h_0$, $\delta \leq \delta_0$ and for all $t \geq 0$ and all $K \in \mathcal{G}_h$,   
%Under the hypothesis \eqref{hypinit}, there exists a constant $C$ such that for all $N\geq N_0$, and for all $t \geq 0$ and all $m \in \mathcal{D}_N^+$, 
\begin{multline}
\label{prezakh2}
F_K^{N,h}(t) = F_K^{N,h}(0) + 2  \int_0^t  \frac{\varepsilon^2 N^\er}{h^2 N^4}  \times \\
\sum_{\substack{ m = j+p \\ m \in \mathcal{C}_K^{N,h}} }  
 \frac{3 \delta (\Psi_{mj p})^2\Big(  \langle | V_j |^2 | V_p|^2  \rangle - \mathrm{sign}(m_x j_x) \langle | V_m|^2 | V_j|^2  \rangle - \mathrm{sign}(m_x p_x) \langle | V_m|^2 | V_p|^2  \rangle\Big)(s)}{ (\omega_m - \omega_j - \omega_{p})^2  + 3^2 \delta^2 }  \dd s\\
+  R(\varepsilon, K,N,h,\delta,t) 
\end{multline}
with $R(\varepsilon, K,N,h,\delta,t)$ satisfying \eqref{eq:rest}. 

\end{proposition}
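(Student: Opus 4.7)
\begin{Proof}
The statement is a cosmetic reformulation of the previous proposition \eqref{prezakh}, written on $\mathcal{D}_N^+$, into a symmetrized form over $\mathcal{D}_N$. The plan is to exploit the reality condition $V_{-k}(t)=\widebar V_k(t)$ and the oddness $\omega_{-k}=-\omega_k$ to combine the two kinetic sums of \eqref{prezakh} into one unified sum indexed by pairs in $\mathcal{D}_N \times \mathcal{D}_N$ with the convolution constraint $m=j+p$, thereby producing the sign factors $\mathrm{sign}(m_xj_x)$ and $\mathrm{sign}(m_xp_x)$ and the factor $3^2\delta^2$ in the denominator coming from $\omega_m - \omega_j - \omega_p$.

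First, I would extend the objects to $\mathcal{D}_N$. Since $|V_{-m}|^2 = |V_m|^2$ and $\gamma_{-m}=\gamma_m$, the fluctuations $F_m^N(t)$ are even in $m$, and so are the coarse-grained averages $F_K^{N,h}$ on $\mathcal{G}_h$. Hence it suffices to prove \eqref{prezakh2} for $K \in \mathcal{G}_h^+$; the case $K \in \mathcal{G}_h^-$ then follows by symmetry $K \mapsto -K$, under which $\Psi_{mjp}$, $(\omega_m-\omega_j-\omega_p)^2$ and the entire integrand are invariant (after the relabeling $(m,j,p) \mapsto (-m,-j,-p)$).

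Second, I would partition the sum $\sum_{m=j+p,\, j,p\in\mathcal{D}_N}$ on the right-hand side of \eqref{prezakh2} (for $m\in\mathcal{D}_N^+\cap\mathcal{C}_K^{N,h}$) into three subsets according to the signs of $j_x$ and $p_x$: (a) $j,p\in\mathcal{D}_N^+$, (b) $j\in\mathcal{D}_N^-$, $p\in\mathcal{D}_N^+$, and (c) $j\in\mathcal{D}_N^+$, $p\in\mathcal{D}_N^-$. (The case $j,p\in\mathcal{D}_N^-$ is impossible since $m_x>0$.) In subset (a), both signs equal $+1$ so the bracket becomes $\langle|V_j|^2|V_p|^2\rangle - \langle|V_m|^2|V_j|^2\rangle - \langle|V_m|^2|V_p|^2\rangle$; by symmetrizing in $(j,p)$ the last two terms combine to $-2\langle|V_m|^2|V_j|^2\rangle$, reproducing exactly the first kinetic sum of \eqref{prezakh}. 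The identification $\Psi_{mjp}=\Psi_{mjp}^+$ on $(\mathcal{D}_N^+)^3$ and $\omega_m-\omega_j-\omega_p$ as in \eqref{prezakh} is immediate.

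Third, subsets (b) and (c) require the change of variable $j\mapsto -j$ (resp.\ $p\mapsto -p$) which sends the constraint $m=j+p$ to $m=-j+p$ (resp.\ $m=j-p$) with the new indices in $\mathcal{D}_N^+$; each such change produces $\omega_m+\omega_{|j|}-\omega_p$ (resp.\ $\omega_m-\omega_j+\omega_{|p|}$) in the denominator and replaces the minus sign on the $\langle|V_m|^2|V_j|^2\rangle$ term by a plus sign (since $\mathrm{sign}(m_xj_x)=-1$). The two subsets (b) and (c) yield identical contributions after relabeling (swapping $j$ and $p$), so they double up and match the factor $4$ appearing in front of the second kinetic sum in \eqref{prezakh}. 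This accounts for the full bracket $\langle|V_j|^2|V_p|^2\rangle + \langle|V_m|^2|V_p|^2\rangle - \langle|V_m|^2|V_j|^2\rangle$ of \eqref{prezakh} (with the convention from \eqref{prezakh2} that $j$ is the variable whose sign was flipped). Finally, since the remainder $R(\varepsilon,K,N,h,\delta,t)$ is estimated in $L^\infty$ and all the manipulations above are algebraic identities that preserve the structure of the estimate \eqref{eq:rest} (with only changes of labels and signs, never of magnitudes), the same bound \eqref{eq:rest} persists. The only non-routine point is to check that no new self-interaction term (analogous to the $k=\ell=m/2$ one treated in \eqref{eqnlinearization}) is generated by the sign-flipping; this is clear because the sign-flipped constraint $m=-j+p$ with $j,p\in\mathcal{D}_N^+$ never forces $j=p$ on the diagonal that was problematic.
\end{Proof}
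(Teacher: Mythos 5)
Your approach --- splitting the sum over $\mathcal{D}_N$ into the three sign-sectors of $(j_x,p_x)$ and matching against the two sums of \eqref{prezakh} via $V_{-k}=\widebar V_k$, $\omega_{-k}=-\omega_k$, $F^N_{-m}=F^N_m$ and the evenness of $\Psi$ --- is exactly the paper's (the paper in fact offers no written proof, treating the proposition as an immediate reformulation of \eqref{prezakh}). Sector (a), the doubling of sectors (b) and (c), and the resulting prefactors $2$ and $4$ are all handled correctly, as is the observation that the remainder bound \eqref{eq:rest} is label-invariant.

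There is, however, one point you absorb into a ``convention'' that is actually a genuine mismatch and should have been flagged. In sector (b) ($j\in\mathcal{D}_N^-$, $p\in\mathcal{D}_N^+$, so $\mathrm{sign}(m_xj_x)=-1$ and $\mathrm{sign}(m_xp_x)=+1$), the bracket of \eqref{prezakh2} becomes, after the substitution $j\mapsto -j$,
$\langle|V_j|^2|V_p|^2\rangle + \langle|V_m|^2|V_j|^2\rangle - \langle|V_m|^2|V_p|^2\rangle$ under the constraint $m=-j+p$; sector (c) gives the same after relabeling, as you note. But the second sum of \eqref{prezakh}, under the same constraint $m=-j+p$, carries the bracket $\langle|V_j|^2|V_p|^2\rangle + \langle|V_m|^2|V_p|^2\rangle - \langle|V_m|^2|V_j|^2\rangle$: the two cross terms appear with the opposite assignment of $j$ and $p$. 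Since $j$ and $p$ play asymmetric roles in $m=-j+p$ (one has $p=m+j$, so swapping them changes the summation set), these two expressions are not equal term by term, and the identity you assert does not hold literally. What your computation actually proves is that \eqref{prezakh2} is the symmetrization of \eqref{prezakh} with the last two cross terms of the second bracket interchanged; equivalently, the printed \eqref{prezakh} and \eqref{prezakh2} differ by a $j\leftrightarrow p$ relabeling in those terms (the same discrepancy already occurs between \eqref{zakh} and its $\mathcal{D}^+$-only reduction stated before Theorem \ref{th1}). Your method is the right one and the rest of the bookkeeping is sound, but the parenthetical ``with the convention that $j$ is the variable whose sign was flipped'' conceals an inconsistency in the source formulas that a complete proof must either resolve or explicitly identify.
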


\section{Linearization and discrete resonant concentration\label{linearization}}
The next result shows the linearization of the previous equation: 
\begin{proposition}
Under the previous hypothesis, we have for all $N\geq N_0$, $t \geq 0$, $K \in \mathcal{G}_h$, $h \leq h_0$ and $\delta \leq \delta_0$,   

\begin{equation}
\label{linzakhregN}
\begin{split}
F_K^{N,h}(t) = & F_K^{N,h}(0) +  \frac{ \varepsilon^2}{ h^2 N^4} \int_0^t \sum_{\substack{m = j + p \\ m \in \mathcal{C}_K^{N,h}} } \frac{3\delta }{( \omega_m - \omega_{j} - \omega_p)^2 + 3^3\delta^2}    L(m,j,p) F_m^N(s) \\
&  + \frac{ \varepsilon^2}{ h^2 N^4}  \int_0^t \sum_{\substack{m = j + p \\ m \in \mathcal{C}_K^{N,h}} } \frac{3 \delta }{( \omega_m - \omega_{j} - \omega_p)^2 +  3^3\delta^2}   S(m,j,p) F_p^N(s)   \\
& + \frac{ \varepsilon^2}{ h^2 N^4} \int_0^t \sum_{\substack{m = j + p \\ m \in \mathcal{C}_K^{N,h}}  } \frac{3 \delta }{( \omega_m - \omega_{j} - \omega_p)^2 +  3^3\delta^2}  S(m,p,j) F_j^N(s)  
\\
&+ R_2(\varepsilon, K,N,h,\delta,t)
\end{split}
\end{equation}
with $L$ and $S$ defined in \eqref{eq:LS}, and 
\begin{equation}
\label{eq:rest2}
\sup_{K \in \mathcal{G}_h} |R_2(\varepsilon, K,N,h,\delta,t)| \leq C \Big( \frac{ \varepsilon }{ h \delta} + \frac{\varepsilon^2 }{ h \delta^2}+ \frac{t \varepsilon^3 }{ h \delta^2} 
+ \frac{ t \varepsilon^2}{\delta h N} +  \frac{t \varepsilon^2 \delta}{N^{2-\alpha}} \Big)
\end{equation}
\end{proposition}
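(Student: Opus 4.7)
The plan is to linearise the quadratic moments $\langle|V_j|^2|V_p|^2\rangle(s)$ appearing in \eqref{prezakh2} around the equilibrium values carried by the invariant measure $\mu_N$, recognise the emerging linear-in-$F^N$ contributions as the right-hand side of \eqref{linzakhregN}, and absorb the remaining pieces into $R_2$. The key algebraic identity is
\begin{equation*}
\langle|V_j|^2|V_p|^2\rangle(s) = \gamma_p\langle|V_j|^2\rangle(s) + \gamma_j\langle|V_p|^2\rangle(s) - \gamma_j\gamma_p + C_{jp}(s),
\end{equation*}
where $C_{jp}(s) := \langle(|V_j|^2-\gamma_j)(|V_p|^2-\gamma_p)\rangle(s)$ is a centred covariance. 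Using the definition \eqref{eqFm} of $F^N$, this becomes $\langle|V_j|^2|V_p|^2\rangle = \gamma_j\gamma_p + N^{-\er}(\gamma_p F_j^N + \gamma_j F_p^N) + C_{jp}$, and similarly for the two other products.

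Inserting these decompositions into \eqref{prezakh2} produces three types of contributions. The terms linear in $F^N$ combine, after collecting coefficients, into the three integrals on the right-hand side of \eqref{linzakhregN}; direct algebra shows that the coefficients of $F_m^N(s)$, $F_p^N(s)$ and $F_j^N(s)$ match exactly $L(m,j,p)$, $S(m,j,p)$ and $S(m,p,j)$ defined in \eqref{eq:LS}. The constant source term, proportional to $\gamma_j\gamma_p - \mathrm{sign}(m_xj_x)\gamma_m\gamma_j - \mathrm{sign}(m_xp_x)\gamma_m\gamma_p$, encodes the stationarity of the Rayleigh-Jeans spectrum $\gamma_k = 1/|k_x|$ for the kinetic equation \eqref{zakh}; after regrouping via $V_{-k}=\bar V_k$ into the $\mathcal{D}^+$-form, this expression reduces to $(m_x - j_x - p_x)/(m_xj_xp_x) = 0$ on $m=j+p$ and to the analogous vanishing on $m=-j+p$, so it contributes nothing.

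The covariance contribution takes the form
\begin{equation*}
\frac{6\delta\varepsilon^2 N^\er}{h^2N^4}\int_0^t\sum_{\substack{m=j+p\\ m\in\mathcal{C}_K^{N,h}}}\frac{(\Psi_{mjp}^+)^2\,\bigl(C_{jp}(s) - \mathrm{sign}(m_xj_x)C_{mj}(s) - \mathrm{sign}(m_xp_x)C_{mp}(s)\bigr)}{(\omega_m-\omega_j-\omega_p)^2+9\delta^2}\,\dd s.
\end{equation*}
Each $C_{jp}$ vanishes under $\mu_N$ (independence of the modes), so only the difference $\rho_N(s)-\mu_N$ contributes. Applying Cauchy-Schwarz with weight $\mu_N^{-1}$, using \eqref{fluct2} to get a factor $\mathcal{O}(N^{1-\er})$, estimating the $L^2(\mu_N)$-norm of the constrained sum of monomials by the number of non-vanishing diagonal pairings (which yields a factor of order $hN^2$), and using the kernel bound $\delta/((\omega_m-\omega_j-\omega_p)^2+9\delta^2)\leq C/\delta$, one obtains the announced $\mathcal{O}(t\varepsilon^2/(\delta hN))$ bound, which is exactly the new term in \eqref{eq:rest2}; the other contributions inherited from \eqref{eq:rest} pass unchanged to \eqref{eq:rest2}.

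The main obstacle is the treatment of the constant source term: although the Rayleigh-Jeans spectrum is pointwise stationary in the $\mathcal{D}^+$-reformulation, in the $\mathcal{D}$-formulation of \eqref{prezakh2} the combination $\gamma_j\gamma_p - \mathrm{sign}(m_xj_x)\gamma_m\gamma_j - \mathrm{sign}(m_xp_x)\gamma_m\gamma_p$ is naïvely non-zero in the mixed-sign sectors, and one must carefully regroup the contributions via $V_{-k}=\bar V_k$ and the symmetry $F_{-m}^N=F_m^N$ to exhibit the cancellation; once this bookkeeping is done, the covariance estimate is a routine propagation-of-chaos argument based on \eqref{fluct2}, mirroring the earlier estimates \eqref{firstest}--\eqref{secest}.
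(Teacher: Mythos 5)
Your proposal follows the paper's own proof almost step for step: the same decomposition of $\langle|V_j|^2|V_p|^2\rangle$ into a product of equilibria, two linear-in-$F^N$ terms and a centred covariance; the same appeal to the stationarity of $\gamma_k=1/|k_x|$ to kill the constant source; and the same Cauchy--Schwarz against $\rho_N-\mu_N$ with \eqref{fluct2} and the pairing count $\mathcal{O}(h^2N^4)$ to get the new $t\varepsilon^2/(\delta h N)$ error term. Your remark that the vanishing of the source term requires regrouping the mixed-sign sectors via $V_{-k}=\widebar V_k$ before the cancellation $(m_x-j_x-p_x)/(m_xj_xp_x)=0$ becomes visible is more explicit than what the paper writes, and is a genuine subtlety worth spelling out.

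There is, however, one concrete gap. You assert that ``each $C_{jp}$ vanishes under $\mu_N$ (independence of the modes)'' and then apply Cauchy--Schwarz to $\rho_N(s)-\mu_N$ across the whole sum. This fails on the diagonal $j=p=m/2$: there $C_{jj}(s)=\langle(|V_{m/2}|^2-\gamma_{m/2})^2\rangle$ is a variance with strictly positive mean under $\mu_N$, so the replacement of $\rho_N(s)$ by $\rho_N(s)-\mu_N$ is not legitimate and the orthogonality-based estimate does not apply to these terms. The paper isolates them and exploits the fact that the second-harmonic triple is uniformly non-resonant, $|\omega_m-2\omega_{m/2}|=\tfrac34|m_x|\geq \tfrac34 a$ by \eqref{orage1}, so the Lorentzian kernel is $\mathcal{O}(\delta)$ rather than $\mathcal{O}(\delta^{-1})$ there (see \eqref{orage2}); together with the crude bound $|\langle(|V_{m/2}|^2-\gamma_{m/2})^2\rangle|\leq C$ obtained from \eqref{fluct2} and \eqref{eq:mkp}, and the count $\mathrm{Card}\,\mathcal{C}_K^{N,h}=h^2N^2$, this yields a contribution of order $t\varepsilon^2\delta N^{\alpha-2}$ --- which is precisely the last term in \eqref{eq:rest2} and is \emph{not} merely inherited from \eqref{eq:rest}, contrary to what your closing sentence suggests. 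The case $j=-p$ is harmless since it forces $m=0$ and the cutoff kills it, but the $j=p$ diagonal must be treated separately for the argument to close.
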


\begin{proof}
In Equation \eqref{prezakh2}, all the terms are quadratic in $|V_j|^2$. We can write all of them as 
$$
\langle | V_j |^2 | V_k|^2  \rangle = \gamma_j \gamma_k + \gamma_k (\langle |V_j|^2 \rangle - \gamma_j ) + \gamma_j (\langle |V_k|^2 \rangle - \gamma_k) + \langle(|V_j|^2 - \gamma_j ) (|V_k|^2 - \gamma_k)\rangle. 
$$
As $\gamma_j$ is a stationnary state of the kinetic equation cancelling the integrand in the equation \eqref{zakh}, the contribution coming from the terms in $\gamma_j \gamma_k$ vanishes. The contribution of the second type of terms of the form $\gamma_k (\langle |V_j|^2 \rangle - \gamma_j )$ gives the main terms in \eqref{linzakhregN} after a rescaling in $N^\alpha$. So we need to estimate the terms coming from the last contribution, quadratic in  $( |V_k|^2  - \gamma_k)$. Three terms of similar form are to be estimated. The first one is 
\begin{equation}
\label{dijon}
\frac{\varepsilon^2 N^\er}{h^2N^4}
\sum_{\substack{m = j+k \\ m \in \mathcal{C}_K^{N,h}}}  
 \frac{3 \delta (\Psi_{mj k})^2}{ (\omega_m - \omega_j - \omega_{k})^2  + 3^2 \delta^2 }  \langle (| V_j |^2 - \gamma_j) ( | V_k|^2 - \gamma_k)  \rangle. 
\end{equation}
Note that in this sum, we cannot have $ j = -k$ which implies $ m = 0$ and the corresponding term is zero. 
When $j = k$, we have $j = k = \frac{m}{2}$. But such terms are non resonant as shown in  \eqref{orage1} and we can use a bound of the form \eqref{orage2}. As we have 
$$
\Big|\langle (| V_{\frac{m}{2}} |^2 - \gamma_\frac{m}{2})^2 \rangle \Big| \leq 
\left(\int  \frac{\rho_N^2} {\mu_N}\right)^{\frac12} \left(
 \int  ( | v_{\frac{m}{2}} |^2 - \gamma_\frac{m}{2})^4  \mu_N \right)^{\frac12} \leq C
$$ 
by using \eqref{fluct2} with $\er \geq 1$ and \eqref{eq:mkp}. 
We deduce that (compare \eqref{eqnlinearization})
$$
\left| \frac{\varepsilon^2 N^\er}{h^2N^4}
\sum_{ m \in \mathcal{C}_K^{N,h}}  
 \frac{3 \delta (\Psi_{m \frac{m}{2} \frac{m}{2} })^2}{ (\omega_m - 2\omega_\frac{m}{2} )^2  + 3^2 \delta^2 }  \langle (| V_{\frac{m}{2}} |^2 - \gamma_\frac{m}{2})^2 \rangle 
 \right| \leq \frac{C \delta \varepsilon^2 N^\er}{h^2 N^4} \left(\sum_{ m \in \mathcal{C}_K^{N,h}} 1 \right)  \leq C \frac{\varepsilon^2 \delta}{N^{2 - \alpha}}, 
$$
and this term contributes to the  last term in \eqref{eq:rest2}. 

Hence in the sum \eqref{dijon}, we can exclude the terms for which $j = \pm k$, and  by orthogonality, we can write
$$
\langle (| V_j |^2 - \gamma_j) ( | V_k|^2 - \gamma_k)  \rangle  = \int (| v_j |^2 - \gamma_j) ( | v_k|^2 - \gamma_k) ( \rho_N(t) - \mu_N)
$$
and by estimating the sum as before, 
we obtain that it is  bounded by 
$$
\frac{\varepsilon^2 N^\er}{ \delta h^2 N^{4}} \times \Big(\frac{C_0}{N^{2\er - 2}}\Big)^{\frac12} \times \big( C h^2 N^4)^{\frac12}  = \frac{C}{\delta h N}, 
$$
yielding the penultimate contribution in \eqref{eq:rest2}. 
The other terms are estimated similarly. 
\end{proof}

\underline{End of the proof of Theorem \ref{th1}.} 

Note that for $t \leq T/(\pi \varepsilon^2)$ with the notations of the Theorem, we have using \eqref{eq:rest2} that 
$$
\sup_{\substack{K \in \mathcal{G}_h \\ t \leq T/(\pi \varepsilon^2)}} |R_2(\varepsilon, K,N,h,\delta,t)| \leq C \Big(  \frac{ \varepsilon }{ h \delta^2} 
+ \frac{ 1}{\delta h N} +  \frac{\delta}{N^{2-\alpha}} \Big), 
$$
provided that $\varepsilon$ and $\delta$ are small enough. 

In the remainder of the proof, we set $\lambda =  3 \delta$. 

Let $f_\lambda(t,m) \in C^1(\mathcal{D})$ be the solution of the linearized kinetic equation \eqref{linzakh3} on the interval $[0,T]$ with initial data $f_\lambda(0,m) = g_0(m)$, $m \in \mathcal{D}$. As noticed in Remark \ref{rkderivees}, if we have $\Norm{f_\lambda}{L^\infty(\mathcal{D})} \leq C$ uniformly in $\lambda$, we have only $\Norm{f_\lambda}{W^{1,\infty}(\mathcal{D})} \leq C \lambda^{-2}$ (see \eqref{mamachou}). 

We define $\widetilde f_\lambda (t,m) = f_\lambda(\pi \varepsilon^2 t,m)$ and for $m \in \mathcal{D}_N$ and $K \in \mathcal{G}_h$, 
$$
r_m^N(s) = F_m^N(s) - \widetilde f_\lambda(s,m) \quad \mbox{and}\quad r_K^{N,h}(s) = \frac{1}{h^2 N^2} \sum_{m \in \mathcal{C}_K^{N,h}}r^N_m(s). 
$$
Note that we have $r_K^{N,h}(0) = 0$. 
%\begin{equation}
%\label{traindijon2}
%r_K^{N,h}(s) = F_K^{N,h}(s) - \widetilde f_\lambda(s,K) + \mathcal{O}(h) \quad \mbox{and}\quad r_K^{N,h}(0) = 0. 
%\end{equation}
%\begin{theorem}
%There exists $\varepsilon_0$, $\delta_0$, $N_0$ and $\kappa_0$ such that if 
%$$
%\delta \leq \delta_0, \quad \varepsilon \leq \varepsilon_0,\quad N \geq N_0\quad \mbox{and} \quad 
%\frac{\eta}{\delta^2} \leq \kappa_0, 
%$$
%then we have the estimate 
%\begin{equation}
%\label{estfund}
%\forall\, t \leq \frac{T}{\pi \varepsilon^2}\quad \sup_{K} |b_K^\eta(t)  - f(\pi\varepsilon^2 t, K) | \leq C \Big(  \frac{\varepsilon}{\eta \delta^2}+ \frac{ 1}{\eta \delta N} + \sqrt{\delta} \Big). 
%\end{equation}
%\end{theorem}
By definition, $\widetilde f_{\lambda}(t,m) = f_\lambda(\pi \varepsilon^2 t,m)$ satisfies 
$$
\widetilde f_{\lambda}(t,m) = g_0(m) +  \pi \varepsilon^2 \int_0^t (\mathcal{L_\lambda} \widetilde f_\lambda)(s,m) \dd s, 
$$
for $t \leq T/(\pi\varepsilon^2)$, where $\mathcal{L}_\lambda$ is the operator defining the quasi-resonant equation \eqref{linzakhreg}. 
%\begin{remark}
%\label{rkpiano} At this stage, we could also have compared $F_K^{N,h}(s)$ with the solution $f_\lambda(\pi \varepsilon^2 t,m)$ obtained from the quasi-resonant linearized equation \eqref{linzakhreg}. This would remove the term $\mathcal{O}(\sqrt{\lambda}) = \mathcal{O}(\sqrt{\delta})$ in the previous estimate and hence in the final estimate \eqref{estfund}. Hence we see that for a fixed $\delta$, there exists a limit to the fluctuation when $(N,\varepsilon) \to (\infty,0)$ which is the regularized kinetic equation \eqref{linzakhreg}. 
%\end{remark}
Now, using the estimates \eqref{traindijon} and  \eqref{mamachou},  we have that 
\begin{multline*}
\pi (\mathcal{L_\lambda} \widetilde f_\lambda)(s,m) =  \frac{ 1}{ N^2} \sum_{m = j + p} \frac{\lambda }{( \omega_m - \omega_{j} - \omega_p)^2 + \lambda^2}     L(m,j,p)\widetilde f_\lambda(s,m) \\
 \frac{ 1}{ N^2} \sum_{m = j + p} \frac{\lambda }{( \omega_m - \omega_{j} - \omega_p)^2 + \lambda^2}     (Q(m,j,p) \widetilde f_\lambda(s,p) +  Q(m,p,j)\widetilde f_\lambda(s,j)) + \mathcal{O}(\frac{1}{N\lambda^2}), 
\end{multline*}
where the last term comes from standard estimates between continuous and discrete integral of mesh $1/N$, owing to the fact that the integrand has a derivative uniformly bounded by $1/\lambda^{2}$. 

Hence, we obtain in view of \eqref{linzakhregN}
\begin{equation}
\label{linzakhregN2}
\begin{split}
r_K^{N,h}(t) = & r_K^{N,h}(0) +  \frac{ \varepsilon^2}{ h^2 N^4} \int_0^t \sum_{\substack{m = j + p \\ m \in \mathcal{C}_K^{N,h} }} \frac{\lambda }{( \omega_m - \omega_{j} - \omega_p)^2 + \lambda^2}    L(m,j,p) r_m^N(s)  \\
&  + \frac{ \varepsilon^2}{ h^2 N^4}  \int_0^t \sum_{\substack{m = j + p \\ m \in \mathcal{C}_K^{N,h} } } \frac{\lambda }{( \omega_m - \omega_{j} - \omega_p)^2 + \lambda^2}   Q(m,j,p) r_p^N(s)  \\
& + \frac{ \varepsilon^2}{ h^2 N^4} \int_0^t \sum_{\substack{m = j + p \\ m \in \mathcal{C}_K^{N,h} } } \frac{\lambda }{( \omega_m - \omega_{j} - \omega_p)^2 + \lambda^2}  Q(m,p,j) r_j^N(s)  
\\
&+ R_3(\varepsilon, K,N,h,\delta,t)
\end{split}
\end{equation}
with for $t \leq T/(\pi\varepsilon^2)$ and $\varepsilon_0$ and $\delta_0$ small enough, 
\begin{equation}
\label{eq:rest3}
\sup_{\substack{K \in \mathcal{G}_h \\ t \leq T/(\pi\varepsilon^2)} } | R_3(\varepsilon, K,N,h,\delta,t)| \leq C \Big( \frac{ \varepsilon}{h \delta^2} 
+ \frac{ 1}{h \delta N}  + \frac{1}{\delta^2 N} + \frac{\delta}{N^{2 - \er}}\Big). 
\end{equation}
Now the first term in the time integral of the right-hand side of \eqref{linzakhregN2} can be written 
$$
 \frac{ \varepsilon^2}{ h^2 N^2} \sum_{ m \in \mathcal{C}_K^{N,h} } \left( \frac{1}{N^2}\sum_{j \in \mathcal{D}_N} \frac{\lambda }{( \omega_m - \omega_{j} - \omega_{m-j})^2 + \lambda^2}    L(m,j,m-j) \right) r_m^N(s)
$$
and we have by standard approximation on the subgrid $\mathcal{G}_h$ that for all $m \in \mathcal{C}_K^{N,h}$, 
\begin{multline*}
 \frac{1}{N^2}\sum_{j\in \mathcal{D}_N} \frac{\lambda }{( \omega_m - \omega_{j} - \omega_{m-j})^2 + \lambda^2}    L(m,j,m-j)  \\=  h^2\sum_{J\in \mathcal{G}_h}  \frac{1}{h^2 N^2} \sum_{j \in \mathcal{C}_J^{N,h}} \frac{\lambda }{( \omega_m - \omega_{j} - \omega_{m-j})^2 + \lambda^2}    L(m,j,m-j) 
\\ = h^2\sum_{J \in \mathcal{G}_h} \frac{\lambda }{( \omega_K - \omega_{J} - \omega_{K-J})^2 + \lambda^2}    L(K,J,K-J) + \mathcal{O}( \frac{h}{\lambda^2}), 
\end{multline*}
where the last term comes from the estimate \eqref{traindijon} of the derivative of the summand. Hence we obtain 
\begin{multline*}
 \frac{ 1}{ h^2 N^4} \sum_{\substack{m = j + p \\ m \in \mathcal{C}_K^{N,h} }} \frac{\lambda }{( \omega_m - \omega_{j} - \omega_p)^2 + \lambda^2}    L(m,j,p) r_m^N(s) \\
 = h^2\sum_{J \in \mathcal{G}_h} \frac{\lambda }{( \omega_K - \omega_{J} - \omega_{K-J})^2 + \lambda^2}    L(K,J,K-J) r_K^{N,h} (s)+ \mathcal{O}\big( \frac{h}{\delta^2}\sup_{K} | r_K^{N,h}(s)|\big). 
\end{multline*}
The second and third terms in Equation \eqref{linzakhregN2} can be written equivalently
$$
h^2 \sum_{J \in \mathcal{G}_h} \frac{1}{h^2 N^2}\sum_{ j \in \mathcal{C}_K^{N,h} }r_j^N(s) \left( \frac{ 1}{ h^2 N^2}  \sum_{m \in \mathcal{C}_K^{N,h}  } \frac{\lambda }{( \omega_m - \omega_{m-j} - \omega_j)^2 + \lambda^2}   Q(m,m-j,j) \right) 
$$
and as before, we have for $j \in \mathcal{C}_J^{N,h}$, 
\begin{multline*}
\frac{ 1}{ h^2 N^2}  \sum_{m \in \mathcal{C}_K^{N,h}  } \frac{\lambda }{( \omega_m - \omega_{m-j} - \omega_j)^2 + \lambda^2}   Q(m,m-j,j)  \\
= \frac{\lambda }{( \omega_K - \omega_{K-J} - \omega_J)^2 + \lambda^2}   Q(K,K-J,J) + \mathcal{O}(\frac{h}{\delta^2}). 
\end{multline*}
We therefore obtain a similar estimate as for the first term. Eventually, we have that
\begin{equation}
\label{linzakhregN3}
\begin{split}
r_K^{N,h}(t) = & r_K^{N,h}(0) +  \varepsilon^2 \int_0^t h^2 \sum_{K = J + P} \frac{\lambda }{( \omega_K - \omega_{J} - \omega_P)^2 + \lambda^2}    L(K,J,P) r_K^{N,h}(s) \dd s  \\
&  +  \varepsilon^2  \int_0^t h^2 \sum_{K = J + P} \frac{\lambda }{( \omega_K - \omega_{J} - \omega_P)^2 + \lambda^2}  Q(K,J,P) r_P^{N,h}(s) \dd s \\
& + \varepsilon^2\int_0^t h^2 \sum_{K = J + P} \frac{\lambda }{( \omega_K - \omega_{J} - \omega_P)^2 + \lambda^2}   Q(K,P,J) r_J^{N,h}(s) \dd s 
\\
&+ R_3(\varepsilon, K,N,h,\delta,t) + R_4(\varepsilon, K,N,h,\delta,t, r_K)
\end{split}
\end{equation}
with $R_3$ satisfying \eqref{eq:rest3}, and 
$$
\sup_{K \in \mathcal{G}_h} |R_4(\varepsilon, K,N,h,\delta,t, r_K)| \leq C \frac{h}{\delta^2} \varepsilon^2 \int_0^t \sup_{K} | r_K^{N,h}(s)| \dd s. 
$$
Let us set  $\Norm{r^{N,h}(s)}{\ell^\infty} := \sup_{K \in \mathcal{G}_h} |r_K^{N,h}(s)|$. 
From the previous equation, we obtain that 
$$
\Norm{r^{N,h}(t)}{\ell^\infty} \leq \Norm{r^{N,h}(0)}{\ell^\infty} + \sigma_0  + C \varepsilon^2 \int_0^t ( \Norm{\mathcal{L}_\lambda^h}{} + \frac{h}{\delta^2}) \Norm{r^{N,h}(s)}{\ell^\infty}  \dd s 
$$
where $\sigma_0 = \sup_{K,t \leq T/(\pi\varepsilon^2)} |R_3(\varepsilon, K,N,h,\delta,t)|$ and 
$$
\Norm{\mathcal{L}_\lambda^h}{} = \sup_{K} \left|
h^2 \sum_{K = J + P} \frac{\lambda }{( \omega_K - \omega_{J} - \omega_P)^2 + \lambda^2}    ( L(K,J,P)  + Q(K,J,P) + Q(K,P,J)) 
\right|
$$
Now we see that the right-hand side of this equation is a discretisation of the upper bound of $\Norm{\mathcal{L}_\lambda}{L^\infty}$ with mesh $h$. As this term is uniformly bounded when $\lambda \to 0$ and as the integrand is smooth with derivative bounded by $C\lambda^{-2}$, we deduce that (with $\lambda = 3 \delta$) 
$$
\Norm{\mathcal{L}_\lambda^h}{} \leq C( 1 + \frac{h}{\lambda^2}). 
$$
We deduce by using the Gr\"onwall lemma that for all $t \leq T/(\pi\varepsilon^2)$, when $h \leq \delta^2$, we have 
$$
\Norm{r^{N,h}(t)}{\ell^\infty} \leq C (\sigma_0 + \Norm{r^{N,h}(0)}{\ell^\infty}) \leq C \Big( \frac{ \varepsilon}{h \delta^2} 
+ \frac{ 1}{h \delta N} + \frac{\delta}{N^{2 - \er}}\Big), 
$$
as 
$$
\frac{1}{\delta^2 N} \leq \frac{ 1}{h \delta N}  
$$
under the assumption $h \leq \delta^2$, and if $\delta$ is small enough. 
We thus easily obtain the result. 

\begin{remark}
The proof of Corollary \ref{cor1} is obvious after noticing that for $K \in \mathcal{G}_h$ and $t \leq T$, 
\begin{eqnarray*}
\left(\frac{1}{h^2N^2}\sum_{k \in \mathcal{C}_K^{N,h}} f_\lambda(t,k)\right)  - f(t,K) &=& 
\frac{1}{h^2N^2}\sum_{k \in \mathcal{C}_K^{N,h}} (f_\lambda(t,k) - f(t,k)) \\
&& + \frac{1}{h^2N^2}\sum_{k \in \mathcal{C}_K^{N,h}} (f(t,k) - f(t,K)),
\end{eqnarray*}
since $\mathrm{Card} \, \mathcal{C}_K^{N,h} = h^2 N^2$. In the right-hand side of this equation, the first term is bounded by $C \sqrt{\lambda}$ with $\lambda = 3 \delta$ by using \eqref{th3conv}, and the second term is bounded by $C h \leq C \delta^2$ by using the regularity of $f(t,k)$ given by Theorem \ref{th3}. Note also that using \eqref{mamachou}, we easily obtain the estimate 
$$
\frac{1}{h^2N^2}\sum_{k \in \mathcal{C}_K^{N,h}} f_\lambda(t,k) = f_\lambda(t,K) + \mathcal{O}(\frac{h}{\lambda^2}). 
$$
\end{remark}

\section{The deterministic equation with random initial conditions}

We now give  the proof of Theorem \ref{th2}. Let us first remark that Proposition \ref{prop2} is valid for all $\delta \geq 0$, and that in fact for $\delta = 0$, \eqref{fluct2} is an equality. Hence all the bounds in the proof for the stochastic case will be valid, up to estimates on the small denominators. These diophantine controls are given by the following estimates: 
\begin{lemma}
Recall that for $k = (k_x,k_y) \in \mathcal{D}$, $\omega_k = k_x^3 + \eta k_y^2 k_x^{-1}$. 
There exists $\nu > 0$ such that 
for almost all $\eta >0$,  there exists a constant $c$ such that we have for all $N \in \N$, 
\begin{equation}
\label{nonres}
\forall\, (k,j,\ell) \in \mathcal{D}_N^+\qquad 
|\omega_{k+j} - \omega_k - \omega_j| > \frac{c}{N^\nu}\quad \mbox{and}
\quad |\omega_{k+j+\ell} - \omega_k - \omega_j - \omega_\ell| > \frac{c}{N^\nu}. 
\end{equation}
Moreover, defining for a given $m \in \mathcal{D}_N^+$ the resonant set 
$$
\mathcal{R}_m^N = \{ (j,k,\ell) \in (\mathcal{D}_N^+)^3 | \, m = k - j + \ell \quad\mbox{and} \quad \omega_m = \omega_k - \omega_j + \omega_\ell\}
$$
there exists a constant $C$ such that 
for all $m$ almost all $\eta$ 
\begin{equation}
\label{cardR}
\mathrm{Card} \, \mathcal{R}_m^N \leq C N^2
\end{equation}
and 
\begin{equation}
\label{nonres2}
\forall\, (j,k,\ell) \notin \mathcal{R}_m^N, \quad | \omega_m + \omega_j - \omega_k - \omega_\ell| > \frac{c}{N^\nu}.
\end{equation}
\end{lemma}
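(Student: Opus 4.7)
The argument has two ingredients: an algebraic one describing when the small denominators vanish identically in $\eta$, and a measure-theoretic one (Borel--Cantelli) to turn algebraic non-degeneracy into a uniform lower bound. Because $\omega_k = k_x^3 + \eta k_y^2/k_x$, every combination $F(\eta) = \sum_i \varepsilon_i \omega_{k^{(i)}}$ is affine in $\eta$: $F(\eta) = F_x + \eta F_y$. Writing $k^{(i)} = (\alpha^{(i)}/N, \beta^{(i)}/N)$ with $\alpha^{(i)}, \beta^{(i)} \in \mathbb{Z}$, and multiplying $F$ by a suitable power of $N$ and by the common denominator $\prod_i k_x^{(i)}$ (bounded below uniformly on $\mathcal{D}_N^+$), one obtains $\widetilde F(\eta) = \widetilde A + \eta \widetilde B$ with $\widetilde A, \widetilde B \in \mathbb{Z}$ of size at most a fixed power $N^q$.

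For the 3- and 4-wave inequalities in \eqref{nonres}, the zeroth-order coefficient $F_x$ is respectively $(k_x+j_x)^3 - k_x^3 - j_x^3 = 3k_xj_x(k_x+j_x)$ and $(k_x+j_x+\ell_x)^3 - k_x^3 - j_x^3 - \ell_x^3 = 3(k_x+j_x+\ell_x)(k_xj_x+j_x\ell_x+k_x\ell_x) - 3k_xj_x\ell_x$, both strictly positive on $\mathcal{D}_N^+$ (the second by an elementary AM--GM argument on positive reals). Hence $|\widetilde A|$ is bounded below by a positive power of $N$, so $\widetilde F$ is not identically zero in $\eta$. When $\widetilde B = 0$, $|F(\eta)|$ is bounded below uniformly; when $\widetilde B \neq 0$, $|\widetilde B| \geq 1$ and the bad set $\{\eta > 0 : |F(\eta)| < c/N^\nu\}$ is contained in an interval of length at most $C N^{q'-\nu}$. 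Summing over the $O(N^4)$ pairs (resp.\ $O(N^6)$ triples) at level $N$ yields a bad set $B_N$ of measure $\lesssim N^{q'+4-\nu}$ (resp.\ $N^{q'+6-\nu}$). Taking $\nu$ large enough makes $\sum_N |B_N \cap [0,R]|$ convergent on any bounded interval, and Borel--Cantelli produces the inequality for almost every $\eta$ and all but finitely many $N$; a finite adjustment of the constant $c(\eta)$ covers the remaining small $N$.

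For the resonant set $\mathcal{R}_m^N$, substituting $m = k - j + \ell$ in $F_x = m_x^3 + j_x^3 - k_x^3 - \ell_x^3$ yields the factorisation
\[
 m_x^3 + j_x^3 - k_x^3 - \ell_x^3 = 3(k_x - j_x)(\ell_x - j_x)(k_x + \ell_x),
\]
and because $k_x + \ell_x = m_x + j_x > 0$ on $\mathcal{D}_N^+$, the identical vanishing of $\widetilde F$ forces $k_x = j_x$ (so $m_x = \ell_x$) or $\ell_x = j_x$ (so $m_x = k_x$). In either branch, the condition $F_y = 0$ combined with momentum conservation $\ell_y = m_y + j_y - k_y$ (or its analogue) reduces to a genuine linear relation of the form $(k_x - m_x) j_y - (k_x + m_x) k_y = -2 k_x m_y$ between the remaining $y$-components. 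For each of the $O(N)$ admissible values of $k_x$ this admits $O(N)$ lattice solutions, giving $|\mathcal{R}_m^N| \leq C N^2$. The Diophantine inequality \eqref{nonres2} then follows from the same Borel--Cantelli scheme applied to the $O(N^8)$ quadruples $(m, j, k, \ell)$ at level $N$, since outside $\mathcal{R}_m^N$ the factorisation above ensures $\widetilde F \not\equiv 0$ as a function of $\eta$.

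The main obstacle is the cardinality bound: after imposing $F_x = 0$ (which restricts the $x$-components to two branches) and using momentum conservation, one must verify that the remaining equation $F_y = 0$ is not tautological but actually reduces the number of free $y$-parameters by one, so as to get $O(N^2)$ rather than $O(N^3)$. The precise exponents $\nu$ in the Borel--Cantelli estimates are not sharp and can be adjusted a posteriori.
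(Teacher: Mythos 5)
Your proof is correct, but it replaces the paper's measure-theoretic ingredient with a different one, so a comparison is worth making. The paper writes each small denominator in the form $\eta\, 3r\bigl(\tfrac1\eta + \tfrac{s}{3r^2}\bigr)$ with $r,s$ rationals whose numerators and denominators are controlled powers of $N$, and then invokes Khinchin's theorem ($|\tfrac1\eta - \tfrac pq|\geq c|q|^{-2-\epsilon}$ for a.e.\ $\eta$); this yields, in one stroke, a single constant $c(\eta)$ valid for \emph{all} $N$, with no exceptional levels to patch. You instead observe that every combination is affine in $\eta$, clear denominators to get $\widetilde A + \eta\widetilde B$ with integer coefficients of polynomial size, and run Borel--Cantelli directly on the bad sets; this is more elementary and self-contained, at the price of having to absorb finitely many exceptional $N$ into the constant. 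That last step is fine but tacitly uses one more (easy) fact you should state: for a.e.\ $\eta$ the affine function $\widetilde A + \eta\widetilde B$ is nonzero for every tuple with $(\widetilde A,\widetilde B)\neq(0,0)$, since each such tuple excludes at most one value of $\eta$ and there are countably many tuples; only then is the minimum over the exceptional levels positive. On the algebraic side your treatment of $\mathcal{R}_m^N$ is essentially the paper's, but your factorisation $m_x^3+j_x^3-k_x^3-\ell_x^3 = 3(k_x-j_x)(\ell_x-j_x)(k_x+\ell_x)$ under $m=k-j+\ell$ reaches the two branches $\{m_x,j_x\}=\{k_x,\ell_x\}$ more directly than the paper's manipulation via $m_xj_x=k_x\ell_x$, and your verification that the residual $y$-equation has a nonvanishing coefficient $-(k_x+m_x)$ in front of $k_y$ is exactly the point needed to get $O(N^2)$ rather than $O(N^3)$ (the trivial resonances $k=m$, $j=\ell$ or $j=k$, $\ell=m$ contributing the other $O(N^2)$). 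Both routes are sound; yours is a genuine, slightly more pedestrian alternative to the paper's use of a ready-made Diophantine theorem.
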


\begin{proof}
Let $\epsilon > 0$. 
From a  version of Khinchin's Theorem (see for instance \cite{Khinchin}), we know that for almost all $\eta > 0$, there exists $c$ such that 
\begin{equation}
\label{khinchin}
\forall\, (p,q)\in \mathbb{Z} \times \mathbb{Z}^*\quad 
\left|\frac{1}{\eta} - \frac{p}{q}\right| \geq \frac{c}{|q|^{2 + \epsilon}}.
\end{equation}
For $k = (k_x,k_y)$ and $j = (j_x,j_y)$ in $\mathcal{D}_N^+$, we calculate that 
\begin{eqnarray*}
\omega_{k+j} - \omega_k - \omega_j &=& 3 k_x j_x (k_x + j_x) + \eta \Big( \frac{(k_y + j_y)^2}{k_x + j_x} -\frac{k_y^2}{k_x} - \frac{j_y^2}{j_x} \Big)
\\
&=& \eta 3 r \left( \frac{1}{\eta} + \frac{s}{3r^2}\right),
\end{eqnarray*}
where $r = k_x j_x (k_x + j_x)$ and $s = (k_y + j_y)^2 j_x k_x - (k_x + j_x) ( k_y^2 j_x + j_y^2 k_x)$. By definition of $\mathcal{D}^+$, we have $r > a$ for some constant $a$ depending on $\mathcal{D}^+$. Moreover, by definition of the discrete grid, we have 
$s = S N^{-4}$ and $r = RN^{-3}$ for some integers $S$ and $R\neq 0$ with $|S| \leq C N^4$ and $|R| \leq C N^3$ for some constant $C$ depending on $\mathcal{D}$. Hence we have 
$$
|\omega_{k+j} - \omega_k - \omega_j|\geq \eta a \left| \frac{1}{\eta} + \frac{SN^2}{3 R^2}   \right|\geq \frac{c_1}{|R|^{4+ 2\epsilon}}\geq \frac{c_2}{N^{12 + 6\epsilon}}
$$
after using \eqref{khinchin}, and for some constants $c_1$ and $c_2$ depending on $\eta$ and $\mathcal{D}$. This proves the first part of the statement\footnote{This estimate is far from being optimal. We could derive much better estimates by using the exact formulas for the resonant manifold in Lemma \ref{lemma21}.}. 

Similarly, we can write 
$$
\omega_{k+j+\ell} - \omega_k - \omega_j - \omega_\ell = r + \eta \frac{s}{t}
$$
with $r = (k_x + j_x + \ell_x)^3 - k_x^3 - j_x^3 - k_x^3$,  $t = (k_x + j_x + \ell_x) j_x \ell_x k_x$ and $s$ a homogeneous polynomial of order $5$ in $(k_x,k_y,j_x,j_y,\ell_x,\ell_y)$. We have that $r$ and $s$ are positive and bounded from below by some constant $a$. Hence, we have 
$$
|\omega_{k+j+\ell} - \omega_k - \omega_j - \omega_\ell| \geq \eta \left| a \Big(\frac{1}{\eta} + \frac{s}{tr}\Big)\right|. 
$$
Now arguing as before, by homogeneity of the polynomials we have $s = SN^{-5}$, $t = T N^{-4}$ and $r = R N^{-3}$ with $|S|\leq C N^5$, $|T|\leq C N^4$ and $|R|\leq C N^{3}$, and we finally obtain that 
$$
|\omega_{k+j+\ell} - \omega_k - \omega_j - \omega_\ell| \geq c_1\left|\frac{1}{\eta} + \frac{SN^2}{TR}\right| \geq \frac{c_2}{N^{12 + 6\epsilon}}
$$
by using \eqref{khinchin}, and for some constants $c_1$ and $c_2$ depending on $\eta$ and $\mathcal{D}$. This shows \eqref{nonres}. 

To proof of \eqref{nonres2} is entirely similar, but we need to study the resonant modulus $\mathcal{R}_m^N$. 
Let $(m,j,k,\ell) \in \mathcal{R}_m^N$. We have 
$$
\omega_m + \omega_j - \omega_k - \omega_\ell = m_x^3 + j_x^3 - k_x^3 - \ell_x^3 + \eta ( \frac{m_y^2}{m_x} + \frac{j_y^2}{j_x} - \frac{k_y^2}{k_x} - \frac{\ell_y^2}{\ell_x}).  
$$
As we can assume that $\eta \notin \mathbb{Q}$, we must have 
$$
m_x^3 + j_x^3 - k_x^3 - \ell_x^3 = 0, \quad \mbox{and} \quad \frac{m_y^2}{m_x} + \frac{j_y^2}{j_x} - \frac{k_y^2}{k_x} - \frac{\ell_y^2}{\ell_x} = 0. 
$$
Using the condition $m_x + j_x  = k_x + \ell_x$, the first relation implies that  $m_x^2 j_x + m_x j_x ^2 = k_x^2 \ell_x+ k_x \ell_x^2$ and hence as $m_x + j_x > 0$, 
$ m_x j_x = k_x \ell_x$, from which we deduce that $m_x^2 + j_x^2 - k_x^2 - \ell_x^2 = 0$ and finally, that $m_x = k_x$ and $j_x = \ell_x$ or $m_x = j_x$ and $m_x = k_x$. 
Let us assume that we are in the first situation. The second relation implies 
$$
j_x(m_y^2 - k_y^2) =  m_x  (\ell_y^2 - j_y^2). 
$$
Note that $m_y - k_y = \ell_y - j_y$. If this quantity vanishes, we are in the situation where $m = k$ and $j = \ell$ and we obtain indeed $\mathcal{O}(N^2)$ possible terms for a given $m$. 
If $m_y - k_y \neq 0$, we obtain 
$$
j_x(m_y + k_y) =  m_x ( \ell_y + j_y) 
$$
and we can verify that the resonant terms are given by 
$$
\ell_y  = \frac{j_x}{2 m_x} (m_y + k_y)  + \frac{1}{2} (m_y - k_y) \quad\mbox{and}\quad 
j_y =  \frac{j_x}{2 m_x} (m_y + k_y) - \frac{1}{2}(m_y - k_y). 
$$
and we see that there is only one degree of freedom ($k_y$) in the $y$-variable and one in the $x$ variable, which shows that the cardinal of the resonant modulus is of size $\mathcal{O}(N^2)$. Note that the exact resonant terms depend on arithmetic conditions on the indices to ensure that $\ell_y$ and $j_y$ indeed belong to the lattice $\mathcal{D}_N^+$. 
\end{proof}
%Moreover, for all $N$ the intersection of the resonant manifold with $z = 0$ and the grid $\mathcal{D}_N^+$ is empty: 
%$$
%\forall\, m \in \mathcal{D}_{N}, \quad \Gamma(0,m) \cap \mathcal{D}_N = \emptyset
%$$
%and we have the following estimate: There exists a constant $c$ such that 
%\begin{equation}
%\label{nonres}
%\forall\, m,p \in \mathcal{D}_{N} \quad |\Omega(m,p)| >  \frac{c}{N^6}. 
%\end{equation}
%

To prove Theorem \ref{th2}, 
we follow the lines of the proof of Theorem \ref{th1} with $\delta = 0$ and using the bounds of the previous Lemma instead of \eqref{bounddelta}. Starting from \eqref{init} to estimate $F_K^{N,h}(t) - F_K^{N,h}(0)$ with $\delta = 0$, we can perform the same integration by part as in \eqref{beurk} by using now the bound (see \eqref{nonres}) 
\begin{equation}
\label{bounddelta2}
\left| \frac{1}{\omega_k + \omega_\ell - \omega_m} \right|  \leq C N^{\nu}, 
\end{equation}
so that the first boundary term can be estimated by 
\begin{equation}
\label{boundterm2}
\left|\frac{\varepsilon N^\er}{h^2 N^3}  \sum_{\substack{m = k + \ell\\ m \in \mathcal{C}_K^{N,h}} } \Psi_{mk\ell}^+ \frac{\langle \widebar V_m V_k V_\ell \rangle(s)}{ (\omega_k + \omega_\ell - \omega_m)  }    \right| \leq \frac{C\varepsilon N^\nu}{ h }
\end{equation}
which is essentially the same as \eqref{boundterm} with $\delta = 1/N^\nu$. Now as in \eqref{beurk2}, we compute the contribution of the time integral and find 
\begin{align}
\label{beurk2reson}
&- \int_0^t  \mathrm{Re} \,  \frac{\varepsilon N^\er}{h^2 N^3}  \sum_{\substack{m = k + \ell\\ m \in \mathcal{C}_K^{N,h}} } \Psi_{mk\ell}^+ \frac{e^{i s (\omega_k + \omega_\ell - \omega_m)  }}{ (\omega_k + \omega_\ell - \omega_m) }  \frac{\dd}{\dd t}  \langle \widebar V_m V_k V_\ell \rangle_*(s) \dd s\\
&\nonumber =  \int_0^t  \mathrm{Re}\,   \frac{i\varepsilon^2 N^\er}{h^2 N^4}  \sum_{\substack{m = k + \ell \\ m = j + p \\m \in \mathcal{C}_K^{N,h}}  
}\Psi_{mk\ell}^+ \Psi_{m j p}^+\frac{e^{i s (\omega_k + \omega_\ell - \omega_j - \omega_p) }}{ (\omega_k + \omega_\ell - \omega_{m})  }  \langle \widebar V_j \widebar V_p  V_k V_\ell \rangle_*(s) \dd s\\
& \nonumber - 2 \int_0^t  \mathrm{Re} \, \frac{i\varepsilon^2 N^\er}{h^2 N^4}  \sum_{\substack{m = k  + \ell \\ k = j + p \\ m \in \mathcal{C}_K^{N,h}}} \Psi_{m k\ell}^+ \Psi_{k jp}^+ \frac{e^{i s (\omega_p + \omega_j + \omega_\ell - \omega_m)  }}{ (\omega_{k} + \omega_\ell - \omega_m)  }    \langle \widebar V_m V_j V_p  V_\ell \rangle_*(s) \dd s \\
& \nonumber + 2 \int_0^t  \mathrm{Re} \,\frac{i\varepsilon^2 N^\er}{h^2 N^4} \sum_{\substack{m = k + \ell \\ m = -j + p \\m \in \mathcal{C}_K^{N,h}}  
}\Psi_{mk\ell}^+ \Psi_{m j p}^+ \frac{e^{i s (\omega_k + \omega_\ell + \omega_j - \omega_p) }}{ (\omega_k + \omega_\ell - \omega_{m})  }  \langle  V_j \widebar V_p  V_k V_\ell \rangle_*(s) \dd s\\
&  \nonumber- 4 \int_0^t  \mathrm{Re}\,  \frac{i\varepsilon^2 N^\er}{h^2 N^4}  \sum_{\substack{m = k  + \ell \\ k = -j + p \\ m \in \mathcal{C}_K^{N,h}}} \Psi_{mk\ell}^+ \Psi_{k j p}^+\frac{e^{i s (\omega_p - \omega_j + \omega_\ell - \omega_m)  }}{ (\omega_{k} + \omega_\ell - \omega_m) }    \langle \widebar V_m \widebar V_j V_p  V_\ell \rangle_*(s) \dd s. 
\end{align}
By using the previous lemma, we see that we can perform an integration by part in as in the stochastic term, for all non resonant terms (the one cancelling the difference of frequencies in the exponentials). By using \eqref{nonres} and \eqref{nonres2}, we see that we will obtain essentially the same estimates as in the stochastic case, by replacing $\delta$ by $N^{-\nu}$. It remains to study the contribution of the resonant terms. 

In view of \eqref{nonres}, we observe that they are no resonant monomials in the the second and third terms contributing to the previous equation. 

For the first term, the term depending on the resonant monomials  can be written  
$$
\mathrm{Re}\,  i\frac{\varepsilon^2 N^\er}{h^2N^4}  \sum_{\substack{m = k + \ell \\ m = j + p \\ m \in \mathcal{C}_K^{N,h} \\ \omega_k + \omega_\ell = \omega_j + \omega_p }  
}\Psi_{mk\ell}^+ \Psi_{m j p}^+\frac{\langle \widebar V_j \widebar V_p  V_k V_\ell \rangle(s)}{ (\omega_k + \omega_\ell - \omega_{m})  }   \dd s. $$
As  $
\overline{\langle \widebar V_j \widebar V_p  V_k V_\ell \rangle} = \langle  V_j  V_p \widebar V_k \widebar V_\ell \rangle
$, we can exchange the indices $(j,p)$ and $(k,\ell)$ and by using the resonance relation, we can prove that the sum is real. Hence by symmetry the term in the previous equation vanishes.

Such a symmetry does not hold for the fourth term, which is written
$$
 \mathrm{Re}\,  \frac{i\varepsilon^2 N^\er}{h^2 N^4}  \sum_{\substack{m = k  + \ell \\ k = -j + p \\ m \in \mathcal{C}_K^{N,h} \\ \omega_p - \omega_j + \omega_\ell - \omega_m = 0}} \Psi_{mk\ell}^+ \Psi_{k j p}^+\frac{ \langle \widebar V_m \widebar V_j V_p  V_\ell \rangle(s)   }{ (\omega_{k} + \omega_\ell - \omega_m) }   \dd s. 
$$
but for this one, we can use \eqref{cardR}. 
Two type of resonant terms are in this sum: first the ones for which $m = p$ and $j = \ell$ or $m = \ell$, $j = p$. These terms depend only on $|V_m|^2|V_p|^2$ and for them the sum is real. Hence these term give no contribution. For the other type of terms, the indices $m$, $ j$, $p$ and $\ell$ must be all different. By estimating as in \eqref{firstest}, using the fact that the number of resonant terms is bounded by $\mathcal{O}(N^2)$ for a given $m$, we obtain that this term is bounded by 
$$
\frac{C \varepsilon^2 N^\er}{h^2 N^4} \left( \frac{1}{N^{2\alpha - 2}}\right)^{1/2} \left( h^2N^4  \right)^{1/2} = \frac{C \varepsilon^2 }{ h N}, 
$$
and hence the contribution of these resonant terms is $t \varepsilon^2 (hN)^{-1}$. 
We then obtain terms of the form \eqref{h1}-\eqref{h2} that can be estimated as in \eqref{bjl} but with $\delta^2$ replace by $ N^{-2\nu}$. The boundary terms are treated in a similar ways as in a similar way, and finally, we see that all the terms in \eqref{beurk2reson} can be bounded by 
 $$
| F_K^{N,h}(t) - F_K^{N,h}(0)  | \leq C\Big(\frac{ \varepsilon  N^\nu }{h } + \frac{\varepsilon^2 N^{2\nu}}{h }+ \frac{t \varepsilon^3 N^{2\nu}}{ h } + \frac{t \varepsilon^2 }{h N} \Big). 
$$
The other terms are treated similarly, and this shows \eqref{estfund2}. 

\end{document}